\newtheorem{defi}{Definition}
\newtheorem{theorem}[defi]{Theorem}
\newtheorem{remark}[defi]{Remark}
 \newtheorem{prop}[defi]{Proposition}
\newtheorem{lemma}[defi]{Lemma}
\newtheorem{cor}[defi]{Corollary}
\newtheorem*{conjecture}{Conjecture}
\newcommand\blue[1]{\textcolor{blue}{#1}}
\let\oldmarginnote\marginnote
\renewcommand{\marginnote}[1]{\oldmarginnote{\blue{\tiny #1}}}
\newcommand{\D}{\mathbb{D}}
\newcommand{\R}{\mathbb R}
\newcommand{\N}{\mathbb N}
\newcommand{\Z}{\mathbb Z}
\newcommand{\T}{\mathbb{T}}
\newcommand{\de}{\, \mathrm{d}}
\newcommand{\del}{\partial}
\newcommand{\CC}{\mathcal{C}}
\newcommand{\CD}{\mathcal{D}}
\newcommand{\CH}{\mathcal{H}}
\newcommand {\bx}{\mathbf x}
\newcommand {\bk}{\mathbf k}
\newcommand {\by}{\mathbf y}
\newcommand {\bn}{\mathbf n}
\newcommand {\RH}{\mathrm H}
\newcommand {\RL}{\mathrm L}
\newcommand {\RB}{\mathrm B}
\newcommand {\RV}{\mathrm V}
\newcommand{\btheta}{\boldsymbol{\theta}}
\newcommand{\abs}[1]{\left\lvert #1 \right\rvert}
\newcommand{\set}[1]{\left\{ #1 \right\}}
\newcommand{\norm}[1]{\left\| #1 \right\|}
\newcommand{\bo}\boldsymbol{}
\newcommand{\bigo}[2][]{O_{#1}\left( #2 \right)}
\newcommand{\smallo}[2][]{o_{#1}\left( #2 \right)}
\DeclareMathOperator{\dist}{dist}
\renewcommand{\hat}{\widehat}
\renewcommand{\tilde}{\widetilde}
\newcommand{\eps}{\varepsilon}
\renewcommand{\phi}{\varphi}
\title[From Steklov to Neumann via homogenisation]{From Steklov to Neumann via homogenisation}
\author{Alexandre Girouard}
\address{D\'epartement de math\'ematiques et de statistique, Pavillon Alexeandre-Vachon, Universit\'e Laval, Qu\'ebec, QC, G1V 0A6, Canada}
\email{alexandre.girouard@mat.ulaval.ca}
\author{Antoine Henrot}
\address{Universit\'e de Lorraine, CNRS, IECL, F-54000 Nancy, France}
\email{antoine.henrot@univ-lorraine.fr}
\author{Jean Lagacé}
\address{Department of Mathematics, University College London, Gower Street, London, WC1E 6BT, United Kingdom}
\email{j.lagace@ucl.ac.uk}
\begin{document}
\begin{abstract}
We study a new link between the Steklov and Neumann eigenvalues of domains in
Euclidean space. This
is obtained through an homogenisation limit of the Steklov problem on a
periodically perforated domain, converging to a family of eigenvalue problems
with dynamical boundary conditions. For this problem, the spectral parameter
appears both in the interior of the domain and on its boundary. This
intermediary problem interpolates between Steklov and Neumann eigenvalues of the
domain. As a corollary, we recover some isoperimetric type bounds for Neumann
eigenvalues from known isoperimetric bounds for Steklov eigenvalues. The
interpolation also leads to the construction of planar domains with first
perimeter-normalized Stekov eigenvalue that is larger than any previously known
example.  
  The proofs are based on a modification of the energy method. It requires quantitative estimates for norms of harmonic functions. An intermediate step in the proof provides a homogenisation result for a transmission problem.
\end{abstract}

\maketitle


\section{\bf Introduction}

Let $\Omega\subset\R^d$ be a bounded and connected domain with
Lipschitz boundary 
$\partial\Omega$. Consider on $\Omega$ the Neumann eigenvalue problem
\begin{gather}\label{Problem:Neumann}
\begin{cases}
-\Delta f= \mu f&\mbox{ in }\Omega,\\
\partial_\nu f= 0 &\mbox{ on }\partial\Omega,
\end{cases}
\end{gather}
as well as the Steklov eigenvalue problem
\begin{gather}\label{Problem:Steklov}
\begin{cases}
\Delta u=0&\mbox{ in }\Omega,\\
\partial_\nu u=\sigma u&\mbox{ on }\partial\Omega.
\end{cases}
\end{gather}
Here $\Delta$ is the Laplacian, and $\del_\nu$ is the outward pointing normal derivative. Both problems consist in finding the eigenvalues $\mu$ and $\sigma$ such that there exist non-trivial smooth solutions to the boundary value problems \eqref{Problem:Neumann} and \eqref{Problem:Steklov}. For both problems, the spectra form discrete unbounded sequences
$$ 0 = \mu_0 < \mu_1 \le \mu_2 \le \dotso \nearrow \infty$$
and
$$0=\sigma_0<\sigma_1\leq\sigma_2\leq\cdots\nearrow\infty,$$
where each eigenvalue is repeated according to multiplicity. The corresponding
eigenfunctions $\set{f_k}$ and $\set{u_k}$ have
natural normalisations as orthonormal bases of $\RL^2(\Omega)$ and $\RL^2(\del\Omega)$, respectively.

\subsection{From Steklov to Neumann : heuristics}
Let us start by painting with broad brushes the relationships between the Neumann
and Steklov eigenvalue problems. They exhibit many similar features, and it is not a surprise that they do so. Indeed, in both cases the eigenvalues are those of a differential or pseudo-differential operator, namely the Laplacian and the Dirichlet-to-Neumann map, whose kernels consist of constant functions. Moreover, in both cases, the natural isoperimetric type problem consists in maximizing $\mu_k$ and $\sigma_k$ (instead of minimizing it as is usual for the Dirichlet problem). The relation
between the two boundary value problem is not solely heuristic and incidental.
Indeed, it is known from the works of Arrieta--Jim\'enez-Casas--Rodriguez-Bernal \cite{Arrieta2008} and Lamberti--Provenzano \cite{LambertiProvenzano2015,LambertiProvenzano2017} that
one can recover the Steklov problem as a limit of weighted Neumann
problems
\begin{gather}\label{Problem:Neumanndensity}
\begin{cases}
-\Delta f= \mu \rho_\eps f&\mbox{ in }\Omega,\\
\partial_\nu f= 0 &\mbox{ on }\partial\Omega,
\end{cases}
\end{gather}
where $\rho_\eps$ is a density function whose support converges to the boundary
as $\eps \to 0$. If we are to interpret the Neumann problem as finding the
frequencies and modes of vibrations of a free boundary membrane, this means that
the Steklov problem represents the frequencies and modes of a membrane whose
mass is concentrated at the boundary. The reader should also refer to the
work of Hassannezhad--Miclo~\cite[Section 4]{HassannezhadMiclo}, where a
similar construction is used to prove a Cheeger-type inequality for Steklov
eigenvalues of a compact Riemannian manifold with boundary.

Our primary goal in this paper is to establish a link in the reverse
direction, by realizing the Neumann problem as a limit of appropriate Steklov
problems. This is achieved in two steps. The first one is to accumulate uniformly
distributed boundary elements inside the domain 
$\Omega$. {This is done by perforating the interior of the domain with small holes that are uniformly distributed}. On these new boundary components, we consider the Steklov boundary
conditions; this step is known as
\emph{the homogenisation process}. We assume that the ratio of
the radii of the holes to the distance between them is at a Cioranescu--Murat type  critical regime. Then, the eigenvalues and eigenfunctions of the Steklov problem converge to those of a \emph{dynamical eigenvalue problem}
  \begin{gather}\label{problem:homo}
    \begin{cases}
      -\Delta U=  A_{d} \beta  \Sigma U&\mbox{ in }\Omega,\\
      \partial_\nu U=\Sigma U&\mbox{ on }\partial\Omega,
    \end{cases}
  \end{gather}
  where $\beta \ge 0$ is the critical regime parameter and $A_d$ is the area of the unit sphere
  in $\R^{d}$. Its eigenvalues form a discrete unbounded sequence: $$\Sigma_{0,\beta}<\Sigma_{1,\beta}\leq\Sigma_{2,\beta}\leq\cdots\nearrow\infty,$$
and once again the functions associated to the eigenvalue $\Sigma_{0,\beta} = 0$ are constant. 

\begin{remark}
In \cite{BelowFrancois}, Joachim von Below and Gilles François studied an eigenvalue
problem that is equivalent
  to Problem~\eqref{problem:homo}, which stems from a parabolic equation
  with dynamical boundary conditions. Indeed, they study the eigenvalue
  problem
  \begin{equation}
  \label{eq:vbf}
   \begin{cases}
   - \Delta u = \lambda u & \text{in }  \Omega,\\
    \del_\nu u = \lambda \alpha u & \text{on } \del \Omega.
   \end{cases}
  \end{equation}
If $\lambda$ is an eigenvalue of Problem~\eqref{eq:vbf}, then $\Sigma=\alpha^{-1} \lambda$ is an eigenvalue of Problem~\eqref{problem:homo} with parameter $\beta = \frac{1}{\alpha A_d}$.
\end{remark}

The parameter $\beta$ in \eqref{problem:homo} can be interpreted as a weight on the interior of the domain,
with the boundary $\del \Omega$ having constant weight $1$. In order to recover
the Neumann problem, the second step will therefore be to send the parameter 
$\beta$ to $\infty$, putting all the weight inside the domain. Under an
appropriate normalisation, eigenvalues and eigenfunctions of Problem
\eqref{problem:homo} converge to those of Problem \eqref{Problem:Neumann},
completing the circle for the relation between the Steklov and the Neumann
problems.
\subsection{The homogenisation process}
Consider a family of problems obtained by
removing periodically placed balls from the domain $\Omega$.
More precisely, given $0 < \eps < 1$, and $\bk\in\Z^d$, define the cube
$$Q_{\bk}^\eps := \eps \bk+\left[-\frac{\eps}{2},\frac{\eps}{2}\right]^d\subset\R^d,$$
and define the set of indices
\begin{gather*}
I^\eps := \set{\bk\in\Z^d\,:\,Q_{\bk}^\eps \subset\Omega}.
\end{gather*}
Let $r_\eps$ be  an increasing positive function of $\eps$ with $r_\eps < \eps/2$.
For $\bk\in \Z^d$, define 
$$T_{\bk}^\eps:=B\left(\eps\bk,r_\eps\right)\subset Q_{\bk}^\eps$$
and set
\begin{equation}
 T^\eps := \bigcup_{\bk \in I^\eps} T^\eps_\bk \subset \Omega.
\end{equation}
Consider the family of perforated domains
$$\Omega^\eps=\Omega\setminus \overline{T^\eps}.$$
\begin{figure}\label{figure:Omegaeps}
  \includegraphics[width=9cm]{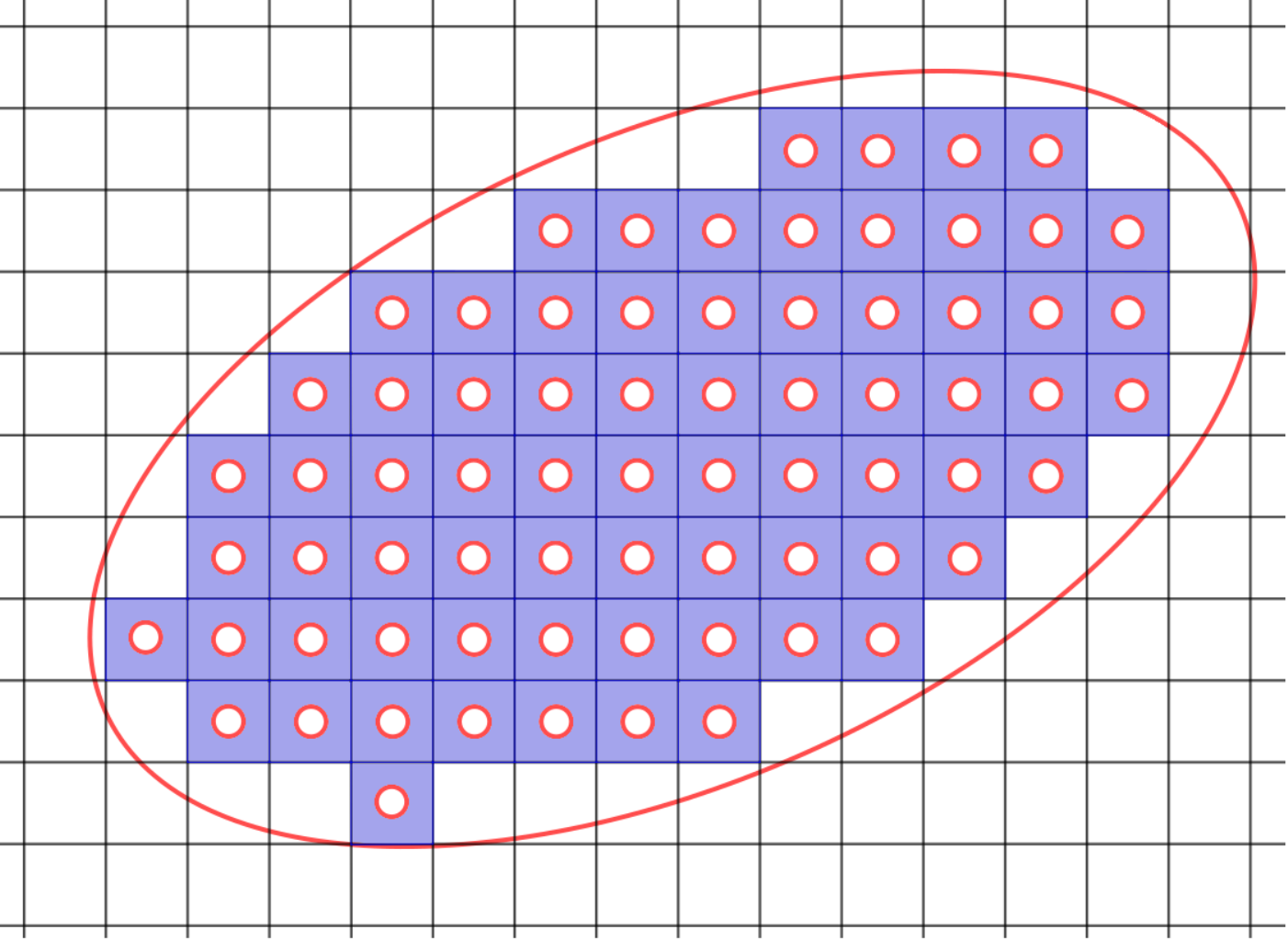}
  \caption{The domain $\Omega_\eps$}
\end{figure}
The Steklov eigenvalues of $\Omega^\eps$ are written as
$\sigma_k^\eps:=\sigma_k(\Omega^\eps),$ and we write $\set{u_k^{\eps}}$ for a corresponding complete sequence of eigenfunctions, normalized by
$$\int_{\partial\Omega^\eps}(u_k^{\eps})^2\de A=1.$$
Our first main result is the following critical regime homogenisation theorem
for the Steklov problem.
\begin{theorem}\label{thm:homogenisation}
  Suppose that $r_\eps^{d-1} \eps^{-d} \to \beta$ for some $\beta\in [0,\infty)$, as $\eps\searrow 0$.
    Then $\sigma_k^{\eps}$ converges
    to the eigenvalue $\Sigma_{k,\beta}$ of \eqref{problem:homo}.
    The functions $U_k^{\eps}\in \RH^1(\Omega)$ obtained by harmonic extension of a normalized Steklov
    eigenfunction $u_k^{\eps}$ over the holes $T^\eps\subset\Omega$ 
    form a sequence which weakly converges in $\RH^1(\Omega)$ to a solution $U_k$ associated with
    $\Sigma_{k,\beta}$ of \eqref{problem:homo}.
\end{theorem}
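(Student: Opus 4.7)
The strategy combines the min--max characterisation of eigenvalues with an energy method in the spirit of Tartar--Murat--Cioranescu, adapted from the usual Dirichlet case to the Steklov case. The relevant Rayleigh quotients are
\als{
\sigma_k^\eps &= \min_{E_k} \max_{u \in E_k \setminus \set{0}} \frac{\int_{\Omega^\eps} \abs{\nabla u}^2 \de x}{\int_{\del \Omega^\eps} u^2 \de A}, \\
\Sigma_{k,\beta} &= \min_{E_k} \max_{U \in E_k \setminus \set{0}} \frac{\int_\Omega \abs{\nabla U}^2 \de x}{A_d \beta \int_\Omega U^2 \de x + \int_{\del \Omega} U^2 \de A},
}
with $E_k$ ranging over $(k+1)$-dimensional subspaces of the appropriate Sobolev space. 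All the content of the theorem ultimately reduces to the asymptotic identification $\int_{\del T^\eps} v^2 \de A \to A_d \beta \int_\Omega v^2 \de x$, in a suitable sense for both trial functions and the eigenfunctions themselves.

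For the upper bound $\limsup_\eps \sigma_k^\eps \leq \Sigma_{k,\beta}$, I would use the first $k+1$ eigenfunctions $U_0,\ldots,U_k$ of \eqref{problem:homo} as a trial space; these are smooth away from $\del\Omega$ by elliptic regularity. A Taylor expansion on each sphere yields
\als{
\int_{\del T_\bk^\eps} U^2 \de A = A_d r_\eps^{d-1} U(\eps\bk)^2 + O\!\left(r_\eps^{d}\norm{U}_{C^1(Q_\bk^\eps)}^2\right),
}
and summing over $\bk\in I^\eps$ turns the leading term into a Riemann sum multiplied by $A_d r_\eps^{d-1}/\eps^d \to A_d\beta$, giving $\int_{\del T^\eps} U^2\de A \to A_d\beta \int_\Omega U^2 \de x$. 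Combined with $\abs{T^\eps}\to 0$, every term in the Rayleigh quotient converges uniformly on the finite-dimensional trial space.

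For the lower bound and the identification of the limit, I would normalise the eigenfunctions so that $\int_{\del\Omega^\eps}(u_k^\eps)^2\de A = 1$, giving $\int_{\Omega^\eps}\abs{\nabla u_k^\eps}^2 = \sigma_k^\eps$, uniformly bounded by the upper bound. Because harmonic extension minimises Dirichlet energy, the extension $U_k^\eps \in \RH^1(\Omega)$ inherits the gradient bound, and a weighted Poincar\'e inequality adapted to $\Omega^\eps$ controls its $\RL^2$-norm. Passing to a weakly convergent subsequence $U_k^\eps \rightharpoonup U_k$ in $\RH^1(\Omega)$, I would test the weak formulation
\als{
\int_{\Omega^\eps} \nabla u_k^\eps \cdot \nabla \phi \de x = \sigma_k^\eps \left( \int_{\del\Omega} u_k^\eps \phi \de A + \int_{\del T^\eps} u_k^\eps \phi \de A\right)
}
against $\phi \in C^\infty(\bar\Omega)$. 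The interior term and the exterior trace pass to the limit by weak and trace-compact convergence; the decisive step is proving $\int_{\del T^\eps} u_k^\eps \phi \de A \to A_d \beta \int_\Omega U_k \phi \de x$, which forces $U_k$ to solve \eqref{problem:homo} weakly.

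The main obstacle is precisely this last convergence. A naive trace inequality loses the critical scaling by a factor of $\eps/r_\eps$; to recover the sharp asymptotic one must exploit the harmonicity of $U_k^\eps$ inside each $T_\bk^\eps$ together with its near-harmonic behaviour on each annulus $Q_\bk^\eps \setminus T_\bk^\eps$, and compare the spherical trace at radius $r_\eps$ with local $\RL^2$-means on the surrounding cube via mean-value-type formulae. This is the r\^ole of the auxiliary transmission problem announced in the abstract: reformulating the Steklov jump on $\del T^\eps$ as a transmission condition makes these quantitative harmonic estimates tractable and controllable by the Dirichlet energy. Once in place, orthonormality of the $u_k^\eps$ passes to orthogonality of the limits in the combined inner product $A_d\beta\de x + \de A|_{\del\Omega}$, excluding eigenvalue collapse, and the upper and lower bounds match to give $\sigma_k^\eps \to \Sigma_{k,\beta}$ for every $k$.
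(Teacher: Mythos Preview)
Your overall architecture matches the paper's, and the upper-bound half via smooth trial functions is fine. Two steps in the lower-bound and identification half are too optimistic, however.

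First, ``harmonic extension minimises Dirichlet energy, so $U_k^\eps$ inherits the gradient bound'' does not close by itself: minimisation is only among competitors with the same trace on $\del T_\bk^\eps$, and you have not exhibited any such competitor with small energy. The paper needs a separate Fourier computation (Lemma~\ref{lemma:localEstimateFourier}), using both the Steklov condition on $\del T_\bk^\eps$ and harmonicity on the surrounding annulus, to obtain $\CD(U_k^\eps;T^\eps)\ll(r_\eps/\eps)^d\,\CD(u_k^\eps;\Omega^\eps)$. Second, and more seriously, your final sentence (``orthonormality passes to orthogonality of the limits, excluding eigenvalue collapse'') hides the hardest step of the whole proof. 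Non-degeneracy of the weak limit requires the \emph{quadratic} convergence $\int_{\del T^\eps}(u_k^\eps)^2\,\de A\to A_d\beta\int_\Omega U^2\,\de x$, which does \emph{not} follow from the linear statement $\int_{\del T^\eps}u_k^\eps\phi\to A_d\beta\int_\Omega U\phi$ for fixed smooth $\phi$: here the test function is $u_k^\eps$ itself and varies with $\eps$. The paper proves this separately (Lemma~\ref{lem:normconv}), and that proof genuinely requires a uniform $\RL^\infty$ bound on the Steklov eigenfunctions $u_k^\eps$, obtained in turn from a uniform bound on the trace $\RB\RV(\Omega^\eps)\to\RL^1(\del\Omega^\eps)$ via a relative-isoperimetric argument (Lemma~\ref{lem:tracebv}). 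Your mean-value heuristics do not supply this ingredient; without it nothing prevents the weak limit from being zero, and the induction on $k$ cannot start.
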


\begin{remark} \label{rem:multiple}
  If the eigenvalue $\Sigma_k=\Sigma_{k,\beta}$ is multiple of multiplicity $m$, \emph{i.e.}
  $$\Sigma_{k-1} < \Sigma_k = \dotso = \Sigma_{k+m-1} <
  \Sigma_{k+m},$$
  the convergence statement in the previous theorem is understood in the following
  sense. Given a basis $U_k,\dotsc,U_{k+m-1}$ for the eigenspace associated with $\Sigma_k$, there is a family of $m\times m$ orthogonal matrices $M(\eps)$ such that
  \begin{equation}
   M(\eps) \begin{pmatrix}
            U_k^\eps \\
            \vdots \\
            U_{k+m-1}^\eps
           \end{pmatrix}
\longrightarrow \begin{pmatrix}
            U_k \\
            \vdots \\
            U_{k+m-1}
           \end{pmatrix}
  \end{equation}
as $\eps \to 0$. One could also be content with the weaker statement that if the eigenvalues are multiple, the convergence statement of theorem \ref{thm:homogenisation} is only true up to taking a subsequence.
\end{remark}

\begin{remark}
  Literature on homogenisation theory is often concerned with the situation where holes are proportional to their reference cell. That is, $r_\eps=c\eps$ for some constant $c\in (0,1/2)$. 
  In this case one has $r_\eps^{d-1} \eps^{-d}\to\infty$. It follows
  from~\cite{ceg2} that $\sigma_k^\eps\to 0$.
  Indeed it is proved there that any bounded domain $\Omega \subset \R^d$ satisfies 
\begin{equation}
  \sigma_k(\Omega)\abs{\del\Omega}^{\frac 1{d-1}} \le C_{d,k},
\end{equation}
where the number $C_{d,k}>0$ depend only on the dimension $d$ and index $k$.
The hypothesis that $r_\eps^{d-1} \eps^{-d}\to\infty$ implies that
$\abs{\del\Omega^\eps} \to \infty$, which forces $\sigma_k^\eps \to 0$, as claimed. Note that this also corresponds to the homogenisation regime which was studied by Vanninathan in~\cite{vanninathan} for a slightly different problem, for which the Dirichlet boundary condition was imposed on $\partial\Omega$ and the Steklov condition on $\partial T^\eps$.

The regime that we consider in Theorem \ref{thm:homogenisation} is the critical
regime for the Steklov problem, where we observe a change of behaviour in the limiting problem. This is akin to the situation studied by Rauch--Taylor~\cite{RauchTaylor} and Cioranescu--Murat \cite{CioranescuMurat}.
\end{remark}

\subsection{Convergence to the Neumann problem and spectral comparison theorems}
The $\beta$ parameter in Problem \eqref{problem:homo} can be interpreted as a relative weight between the interior of $\Omega$ and its boundary $\del \Omega$ for the behaviour of that problem, see Section \ref{sec:dynamical} for details on this interpretation. Our second main result is the following theorem, describing
the specific dependence on $\beta$ in \eqref{problem:homo}.

\begin{theorem}\label{thm:limitbeta}
  For each $k\in\N$, the eigenvalue $\Sigma_{k,\beta}$ depends continuously on $\beta\in[0,\infty)$ and satisfies
	\begin{gather*}
	\lim_{\beta\to\infty}{A_d}\beta \Sigma_{k,\beta} = \mu_k.
	\end{gather*}
	The eigenfunctions $\set{U_{k,\beta}}$ satisfy
\begin{equation}
 \beta^{1/2} U_{k,\beta} \to f_k
\end{equation}
weakly in $\RH^1(\Omega)$ as $\beta \to \infty$, where $f_k$ is the $k$th non--trivial Neumann eigenfunction.
\end{theorem}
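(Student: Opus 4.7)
The weak formulation of Problem~\eqref{problem:homo} and the Courant--Fischer min--max principle yield
\begin{equation*}
\Sigma_{k,\beta} = \min_{\substack{E\subset \RH^1(\Omega)\\\dim E=k+1}}\ \max_{U\in E\setminus\{0\}} \frac{\int_\Omega|\nabla U|^2}{A_d\beta\int_\Omega U^2+\int_{\partial\Omega} U^2}.
\end{equation*}
Setting $\lambda_{k,\beta}:=A_d\beta\,\Sigma_{k,\beta}$, the corresponding Rayleigh quotient has denominator $\int_\Omega U^2+(A_d\beta)^{-1}\int_{\partial\Omega}U^2$, which converges pointwise to the Neumann denominator as $\beta\to\infty$. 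The plan is to establish matching upper and lower bounds $\lambda_{k,\beta}\to \mu_k$ and then read off the eigenfunction convergence.

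The upper bound $\lambda_{k,\beta}\leq \mu_k$ is immediate: insert $E=\spn\{f_0,\dotsc,f_k\}$ into the min--max and drop the nonnegative boundary term. This bound is uniform in $\beta>0$.

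For the lower bound, I normalize the first $k+1$ eigenfunctions so that $A_d\beta\int_\Omega U_{j,\beta}^2+\int_{\partial\Omega}U_{j,\beta}^2=1$, and set $V_{j,\beta}:=\sqrt{A_d\beta}\,U_{j,\beta}$. Then $\int_\Omega V_{j,\beta}^2\leq 1$ and $\int_\Omega|\nabla V_{j,\beta}|^2=\lambda_{j,\beta}\leq \mu_k$, so $\{V_{j,\beta}\}$ is bounded in $\RH^1(\Omega)$; the trace theorem then gives a uniform bound on $\int_{\partial\Omega}V_{j,\beta}^2$, forcing $\int_{\partial\Omega}U_{j,\beta}^2\to 0$ and hence $\int_\Omega V_{j,\beta}^2\to 1$. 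By Rellich compactness and compactness of the trace, a diagonal subsequence $\beta_n\to\infty$ gives $V_{j,\beta_n}\rightharpoonup \tilde f_j$ weakly in $\RH^1(\Omega)$, strongly in $\RL^2(\Omega)$ and in $\RL^2(\partial\Omega)$, with $\lambda_{j,\beta_n}\to\tilde \lambda_j\leq \mu_k$. The limit family $\tilde f_0,\dotsc,\tilde f_k$ inherits $\RL^2(\Omega)$-orthonormality from the $\beta_n$-orthogonality of the $V_{j,\beta_n}$. Passing to the limit in
\begin{equation*}
\int_\Omega \nabla V_{j,\beta_n}\cdot\nabla \phi = \lambda_{j,\beta_n}\Bigl(\int_\Omega V_{j,\beta_n}\phi + (A_d\beta_n)^{-1}\int_{\partial\Omega} V_{j,\beta_n}\phi\Bigr),\qquad \phi\in\RH^1(\Omega),
\end{equation*}
shows that each $\tilde f_j$ is a Neumann eigenfunction with eigenvalue $\tilde \lambda_j$. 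Applying the Neumann min--max to the $(k+1)$-dimensional subspace $\spn\{\tilde f_0,\dotsc,\tilde f_k\}$ yields $\mu_k\leq \tilde \lambda_k$; combined with the upper bound this gives $\tilde \lambda_k=\mu_k$. Since every subsequence admits a sub-subsequence with the same limit, $\lambda_{k,\beta}\to \mu_k$ in the full parameter, and the claimed weak $\RH^1$-convergence $\sqrt{\beta}\,U_{k,\beta}\to f_k$ (up to the orthogonal rotation of Remark~\ref{rem:multiple} when $\mu_k$ is multiple, and a universal scalar factor from the normalization) follows.

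The continuity of $\beta\mapsto \Sigma_{k,\beta}$ on $[0,\infty)$ fits into the same framework: $\Sigma_{k,\beta}$ is monotone non-increasing in $\beta$ and upper semi-continuous (use an optimizer at $\beta_0$ as a trial space in the min--max for $\beta$ near $\beta_0$), while lower semi-continuity follows from the compactness argument above, run instead with $\beta_n\to\beta_0$. The main obstacle throughout is in the lower bound: one must coordinate the extraction of weak limits of the first $k+1$ eigenfunctions simultaneously and propagate their $\beta$-dependent orthogonality, so that the limit vectors form a genuinely $(k+1)$-dimensional family of $\RL^2$-orthonormal Neumann eigenfunctions. Handling a multiple $\mu_k$ is then the mild additional technicality flagged in Remark~\ref{rem:multiple}.
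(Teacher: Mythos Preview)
Your proof is correct and follows the same overall strategy as the paper: bound $A_d\beta\,\Sigma_{k,\beta}$ via min--max, show the rescaled eigenfunctions are bounded in $\RH^1(\Omega)$, extract weak limits that solve the Neumann problem, and identify which Neumann eigenpair arises.

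The organisation of the lower bound is different, and worth noting. The paper argues by induction on $k$: assuming the first $k$ eigenpairs already converge, it decomposes $f_k$ as an element $F_\beta$ orthogonal (for $(\cdot,\cdot)_\beta$) to $U_{0,\beta},\dotsc,U_{k-1,\beta}$ plus a small remainder, uses $F_\beta$ as a test function for $\Sigma_{k,\beta}$, and then rules out by contradiction that the limit eigenpair could be $(\mu_j,f_j)$ for some $j<k$. You instead extract the first $k+1$ rescaled eigenfunctions simultaneously along a single subsequence, show that the limits form an $\RL^2(\Omega)$-orthonormal family of Neumann eigenfunctions, and apply the Neumann min--max to their span to get $\mu_k\le\tilde\lambda_k$. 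Your route is a little more economical (no induction, no decomposition), at the cost of having to coordinate the simultaneous extraction and to check that the $(\cdot,\cdot)_\beta$-orthogonality passes to $\RL^2(\Omega)$-orthonormality in the limit --- which you do correctly via the trace bound. Both approaches handle multiplicities in the same way, by appealing to the rotation of Remark~\ref{rem:multiple}.
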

\begin{remark}
 We make the second observation that this convergence \emph{cannot} be uniform in $k$, as that would contradict \cite[Theorem 4.4]{BelowFrancois}. 
\end{remark}
The relationships between isoperimetric type problems for the Neumann and Steklov eigenvalue problems have been investigated for the first few eigenvalues in~\cite{FreiLauW2018,FreiLauS2018} from the point of view of the Robin problem. Our methods also allow us to investigate the relationship between these isoperimetric problems for every eigenvalue rank $k$.

The combination of Theorem~\ref{thm:limitbeta} and Theorem~\ref{thm:homogenisation} allows the transfer of known bounds for Steklov eigenvalues to bounds for Neumann eigenvalues. For instance, we can combine these two theorems with~\cite[Theorem 1.3]{ceg2}, asserting that for bounded Euclidean domains with smooth boundary
\begin{equation} \label{eq:boundceg2}
\sigma_k(\Omega)|\partial\Omega|^{\frac{2}{d-1}} \le C(d) k^{2/d}.
\end{equation}
This leads to the following.
\begin{cor} \label{cor:boundd}
  The Neumann eigenvalues of a bounded domain $\Omega\subset\R^d$ satisfy
  \begin{gather} \label{eq:dimbound}
    \mu_k(\Omega)|\Omega|^{2/d}\leq C(d)k^{2/d},
  \end{gather}
  where the constant $C(d)$ is exactly that of\, \cite[Theorem 1.3]{ceg2}.
\end{cor}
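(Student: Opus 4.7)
The plan is to apply the Steklov bound~\eqref{eq:boundceg2} to the perforated domains $\Omega^\eps$, pass to the limit $\eps\to 0$ using Theorem~\ref{thm:homogenisation}, and then send $\beta\to\infty$ using Theorem~\ref{thm:limitbeta}. Along this chain, $\sigma_k^\eps$ is converted first to $\Sigma_{k,\beta}$ and then to $\mu_k$, while the perforated boundary $\partial\Omega^\eps$ is effectively replaced, in the limit $\beta\to\infty$, by a quantity proportional to the interior volume $|\Omega|$.

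To implement this, I first compute the asymptotic behaviour of the relevant geometric quantities of $\Omega^\eps$. The boundary $\partial\Omega^\eps$ is the disjoint union of $\partial\Omega$ with the $|I^\eps|$ small spheres $\partial T_\bk^\eps$, each of area $A_d r_\eps^{d-1}$. Since $|I^\eps|\,\eps^d\to|\Omega|$ and by hypothesis $r_\eps^{d-1}\eps^{-d}\to\beta$,
\[
  |\partial\Omega^\eps|=|\partial\Omega|+|I^\eps|A_d r_\eps^{d-1}\longrightarrow |\partial\Omega|+A_d\beta|\Omega| \qquad (\eps\to 0).
\]
Moreover $|\Omega^\eps|\to|\Omega|$, since $r_\eps^d\eps^{-d}=r_\eps\,(r_\eps^{d-1}\eps^{-d})\to 0$. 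Applying~\eqref{eq:boundceg2} to $\Omega^\eps$ and invoking Theorem~\ref{thm:homogenisation} gives, in the limit $\eps\to 0$, the analogous bound with $\sigma_k^\eps$ replaced by $\Sigma_{k,\beta}$ and $|\partial\Omega^\eps|$ replaced by $|\partial\Omega|+A_d\beta|\Omega|$. Sending then $\beta\to\infty$, Theorem~\ref{thm:limitbeta} identifies $\mu_k$ as the limit of $A_d\beta\,\Sigma_{k,\beta}$, while $|\partial\Omega|+A_d\beta|\Omega|\sim A_d\beta|\Omega|$. After rearranging and letting the powers of $A_d\beta$ cancel, one is left with $\mu_k|\Omega|^{2/d}\le C(d)k^{2/d}$ with the very same constant as in \cite[Theorem 1.3]{ceg2}.

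The only delicate point is the bookkeeping of the powers of $A_d\beta$: the contribution coming from the appropriate power of $|\partial\Omega|+A_d\beta|\Omega|$ must cancel exactly against the factor $A_d\beta$ that converts $\Sigma_{k,\beta}$ to $\mu_k$. This is precisely the compatibility between the linear scaling of $|\partial\Omega^\eps|$ in $\beta$ and the asymptotic rate $\Sigma_{k,\beta}\sim \mu_k/(A_d\beta)$ for the dynamical problem~\eqref{problem:homo}; it is what allows a \emph{Steklov} bound on perforated domains to pass, via homogenisation, to a \emph{Neumann} bound on $\Omega$ with the same dimensional constants. Once this compatibility is in hand, no further analytic input is needed beyond Theorems~\ref{thm:homogenisation} and \ref{thm:limitbeta}.
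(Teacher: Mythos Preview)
Your overall strategy is exactly the paper's: apply the Steklov inequality on $\Omega^\eps$, let $\eps\to 0$ via Theorem~\ref{thm:homogenisation}, then $\beta\to\infty$ via Theorem~\ref{thm:limitbeta}. However, the specific bound you invoke is the wrong one, and with it the bookkeeping you describe does \emph{not} work out. If you plug the displayed inequality~\eqref{eq:boundceg2} (with exponent $2/(d-1)$ on $|\partial\Omega|$) into your scheme, the left-hand side behaves like
\[
\Sigma_{k,\beta}\bigl(|\partial\Omega|+A_d\beta|\Omega|\bigr)^{\frac{2}{d-1}}
\ \sim\ \frac{\mu_k}{A_d\beta}\,(A_d\beta)^{\frac{2}{d-1}}\,|\Omega|^{\frac{2}{d-1}}
= \mu_k\,(A_d\beta)^{\frac{3-d}{d-1}}\,|\Omega|^{\frac{2}{d-1}},
\]
which tends to $0$ for $d>3$, to $\infty$ for $d=2$, and for $d=3$ gives $\mu_k|\Omega|$ rather than $\mu_k|\Omega|^{2/3}$. (In fact~\eqref{eq:boundceg2} as displayed is not scale-invariant, so the exponent there is a typo.) Your claim that ``the linear scaling of $|\partial\Omega^\eps|$ in $\beta$'' matches $\Sigma_{k,\beta}\sim\mu_k/(A_d\beta)$ is exactly right \emph{only} if the perimeter enters the Steklov bound with exponent $1$.

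The paper's proof uses instead the form
\[
\sigma_k(\Omega)\,|\partial\Omega|\ \le\ C(d)\,|\Omega|^{\frac{d-2}{d}}\,k^{2/d}
\]
(cited there as \cite[Theorem~1.4]{ceg2}). Applied to $\Omega^\eps$ and passed to the limits exactly as you outline, this gives
\[
\Sigma_{k,\beta}\bigl(|\partial\Omega|+A_d\beta|\Omega|\bigr)\le C(d)\,|\Omega|^{\frac{d-2}{d}}k^{2/d}
\quad\Longrightarrow\quad
\mu_k|\Omega|\le C(d)\,|\Omega|^{\frac{d-2}{d}}k^{2/d},
\]
and hence $\mu_k|\Omega|^{2/d}\le C(d)k^{2/d}$. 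So your argument is correct in spirit; you just need to swap in this version of the Steklov bound (which also explains why you computed $|\Omega^\eps|\to|\Omega|$, a fact that plays no role if only $|\partial\Omega^\eps|$ appears).
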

\begin{remark}
  The existence of a constant depending only on the dimension in inequality \eqref{eq:dimbound} is already known. In fact, Kröger obtained a better constant in \cite{Kroger1992}. However, it follows from Corollary \ref{cor:boundd} that any improvement to the bound  
  \eqref{eq:boundceg2} will transfer to bounds on Neumann eigenvalues.
  \end{remark}
  One of the original motivation for this project was the study of the following quantity:
$$\hat \sigma_k^*:=\sup\left\{\sigma_k(\Omega)|\partial\Omega|\,:\,\Omega\subset\R^2 \text{ bounded with smooth boundary}\right\}.$$
  In dimension $d=2$, we are able to get a stronger version of Corollary
  \ref{cor:boundd} in the sense that we obtain a direct link between $\hat \sigma_k^*$ and
  $$\hat \mu_k^*:=\sup\left\{\mu_k(\Omega)|\Omega|\,:\,\Omega\subset\R^2 \text{ bounded with smooth boundary}\right\}.$$
  In that case, we obtain.
  \begin{theorem} \label{thm:neumannd=2}
   For $d = 2$ and every $k \in \N$,
   \begin{equation}
   \hat \mu_k^* \le \hat \sigma_k^*.
   \end{equation}
  \end{theorem}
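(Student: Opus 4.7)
The idea is to chain the two previous theorems: first use the homogenisation result (Theorem \ref{thm:homogenisation}) to turn a Steklov problem on a perforated domain into the dynamical problem \eqref{problem:homo}, and then use Theorem \ref{thm:limitbeta} to pass from the dynamical problem to the Neumann problem. All the work lies in tracking how the perimeter of $\Omega^\eps$ behaves under these two limits.

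\textbf{Setup.} Fix a bounded smooth domain $\Omega\subset\R^2$ and $k\in\N$. For each $\beta>0$, choose $r_\eps>0$ with $r_\eps/\eps^2\to\beta$ as $\eps\to 0$ (e.g.\ $r_\eps=\beta\eps^2$). For $\eps$ small enough, $r_\eps<\eps/2$ and the holes $T^\eps_\bk$ are contained in $\Omega$, so $\Omega^\eps$ is bounded with smooth boundary. Therefore, by the definition of $\hat\sigma_k^*$,
\begin{equation}\label{eq:plan-isop}
  \sigma_k(\Omega^\eps)\,|\partial\Omega^\eps|\le \hat\sigma_k^*.
\end{equation}

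\textbf{Step 1: First limit $\eps\to 0$.} By Theorem \ref{thm:homogenisation}, $\sigma_k(\Omega^\eps)\to\Sigma_{k,\beta}$. For the perimeter, $\partial\Omega^\eps=\partial\Omega\sqcup\partial T^\eps$ where $|\partial T^\eps|=|I^\eps|\cdot 2\pi r_\eps$. Since $\Omega$ is Lipschitz, $|I^\eps|\eps^2=|\Omega|+O(\eps)$, and therefore
\[
  |\partial T^\eps|=2\pi\,(|I^\eps|\eps^2)\,\frac{r_\eps}{\eps^2}\longrightarrow 2\pi\beta|\Omega|,
\]
which gives $|\partial\Omega^\eps|\to|\partial\Omega|+2\pi\beta|\Omega|$. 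Passing to the limit in \eqref{eq:plan-isop} yields
\begin{equation}\label{eq:plan-intermediate}
  \Sigma_{k,\beta}\bigl(|\partial\Omega|+2\pi\beta|\Omega|\bigr)\le \hat\sigma_k^*.
\end{equation}

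\textbf{Step 2: Second limit $\beta\to\infty$.} With $d=2$ we have $A_d=2\pi$, so Theorem \ref{thm:limitbeta} says $2\pi\beta\,\Sigma_{k,\beta}\to\mu_k(\Omega)$; in particular $\Sigma_{k,\beta}=O(\beta^{-1})\to 0$. Letting $\beta\to\infty$ in \eqref{eq:plan-intermediate} the first term vanishes and the second converges to $\mu_k(\Omega)|\Omega|$, giving $\mu_k(\Omega)|\Omega|\le\hat\sigma_k^*$. Taking the supremum over $\Omega$ concludes the proof.

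\textbf{Main obstacle.} The only non-routine point is verifying that the perimeter asymptotics $|\partial\Omega^\eps|\to|\partial\Omega|+2\pi\beta|\Omega|$ is indeed valid; this amounts to controlling the number of cubes $Q^\eps_\bk$ that are discarded near $\partial\Omega$, which is easy in $\R^2$ because the boundary is smooth (hence the defect in $|I^\eps|\eps^2$ is $O(\eps\,|\partial\Omega|)$). Conceptually, what makes the argument work specifically in dimension $d=2$ is the clean balance $A_d=2\pi$ and the fact that in this dimension the total added perimeter $|I^\eps|\cdot A_d r_\eps^{d-1}$ under the critical scaling $r_\eps^{d-1}\eps^{-d}\to\beta$ is exactly $A_d\beta|\Omega|$, which then matches perfectly with the factor $A_d\beta$ appearing in Theorem \ref{thm:limitbeta}.
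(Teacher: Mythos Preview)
Your proof is correct and follows exactly the same route as the paper's: apply Theorem~\ref{thm:homogenisation} to compute $\lim_{\eps\to 0}\sigma_k(\Omega^\eps)|\partial\Omega^\eps|=\Sigma_{k,\beta}\bigl(|\partial\Omega|+A_d\beta|\Omega|\bigr)$, then use Theorem~\ref{thm:limitbeta} to send $\beta\to\infty$ and recover $\mu_k(\Omega)|\Omega|$. Your closing remark about what is special to $d=2$ is slightly off---the identity $|\partial T^\eps|\sim A_d\beta|\Omega|$ and its match with the factor $A_d\beta$ in Theorem~\ref{thm:limitbeta} hold in every dimension; what singles out $d=2$ is that $\sigma_k|\partial\Omega|$ and $\mu_k|\Omega|$ are both scale-invariant only then, so that $\hat\sigma_k^*$ is finite and the comparison is meaningful---but this does not affect the validity of the argument.
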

  \begin{remark}
  From~\cite{fraschoen3} we have that
  $$\sigma_1(\Omega)|\partial\Omega|<4\pi.$$
  It follows from Theorem~\ref{thm:homogenisation} and Theorem~\ref{thm:limitbeta} that
  $$\mu_1(\Omega)|\Omega|\leq 4\pi.$$
  Of course this bound is already known. Indeed the optimal upper bound is given by the famous Szeg\H{o}--Weinberger theorem:
  $$\mu_1(\D)\pi\approx 3.39\pi.$$
  One can remark, however, that $4\pi$ corresponds to the P\'olya bound for the first Neumann eigenvalue. We can now also see that obtaining bounds of a similar type for $\sigma_k$ would also transfer to $\mu_k$.
\end{remark}

The previous discussion also yields the following corollary.
\begin{cor}
  $$\hat \sigma_1^*\geq \mu_1(\D)\times\pi\approx 3.39\pi.$$
\end{cor}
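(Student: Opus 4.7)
The plan is to combine the two main theorems with an explicit perimeter computation on the perforated disk. Take $\Omega = \D$, the planar unit disk, for which $\mu_1(\D)\pi$ is exactly the Szeg\H o--Weinberger quantity appearing on the right-hand side. For each $\eps\in(0,1)$ with $r_\eps$ chosen such that $r_\eps \eps^{-2}\to \beta$, consider the perforated disk $\D^\eps$ as constructed in Section~1, and apply Theorem~\ref{thm:homogenisation}: the first non-trivial Steklov eigenvalue satisfies $\sigma_1(\D^\eps)\to \Sigma_{1,\beta}$ as $\eps\to 0$.

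Next I would compute the perimeter of $\D^\eps$. Its boundary consists of the unit circle together with the boundaries of the removed disks $T^\eps_\bk$, so
\begin{equation}
    |\partial\D^\eps| = 2\pi + 2\pi r_\eps \cdot \#I^\eps.
\end{equation}
Since the cubes $Q_\bk^\eps$ tile the bulk of $\D$, one has $\#I^\eps \cdot \eps^2 \to |\D| = \pi$ as $\eps\to 0$, and the assumption $r_\eps \eps^{-2}\to\beta$ (with $d=2$) yields
\begin{equation}
    |\partial\D^\eps| \longrightarrow 2\pi + 2\pi^2\beta \qquad \text{as } \eps\to 0.
\end{equation}
Combining with the spectral convergence and the definition of $\hat\sigma_1^*$, we obtain
\begin{equation}
    \hat\sigma_1^* \ge \Sigma_{1,\beta}\bigl(2\pi + 2\pi^2\beta\bigr) \qquad \text{for every } \beta\ge 0.
\end{equation}

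Now I would send $\beta\to\infty$ and invoke Theorem~\ref{thm:limitbeta} in dimension two, where $A_d = A_2 = 2\pi$: this gives $2\pi\beta\,\Sigma_{1,\beta}\to \mu_1(\D)$, and therefore
\begin{equation}
    \Sigma_{1,\beta}\bigl(2\pi + 2\pi^2\beta\bigr) = \frac{2\pi+2\pi^2\beta}{2\pi\beta}\cdot 2\pi\beta\,\Sigma_{1,\beta} \longrightarrow \pi\,\mu_1(\D).
\end{equation}
Taking the supremum over $\beta$ in the previous display, one concludes $\hat\sigma_1^*\ge \pi\,\mu_1(\D)$, and the Szeg\H o--Weinberger value $\mu_1(\D)\approx 3.39$ gives the stated numerical bound.

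The only genuinely non-trivial ingredients are the two main theorems, which are already established. The rest is a bookkeeping computation: the main thing to be careful about is that the diagonal limit $\eps\to 0$ (for fixed $\beta$) followed by $\beta\to\infty$ is legitimate, which is guaranteed here because at each stage the limit is taken independently and we only need a lower bound on the supremum. A minor point worth addressing explicitly is that the perforated disk $\D^\eps$ has smooth (in fact, real-analytic) boundary, so it is an admissible competitor for $\hat\sigma_1^*$.
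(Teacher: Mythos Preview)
Your proof is correct and follows essentially the same approach as the paper: the paper deduces the corollary from Theorem~\ref{thm:neumannd=2} (whose proof is precisely the generic version of your computation) applied with $\Omega=\D$, together with the observation that the bound is approached by the homogenisation sequence of pierced unit disks with $\beta\to\infty$. Your version simply unpacks this directly for the disk, with the same two-step limit $\eps\to 0$ then $\beta\to\infty$.
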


Indeed, by the Szego-Weinberger inequality, we have that
\begin{equation}
 \hat \sigma^*_1 \ge \pi \mu_1(\D) \approx 3.39 \pi.
\end{equation}
Furthermore, this number will be approached as close as desired in homogenisation sequence of pierced unit disks with large enough parameter $\beta$.  Of course, it is known from \cite{fraschoen2} that if one is allowed to optimise amongst surfaces rather than Euclidean domains, then there is a sequence of surfaces such that $\sigma_1(\Omega_n)\abs{\del \Omega_n} \to 4\pi$.
This leads to the natural conjecture
\begin{conjecture}
  $$\hat \sigma_1^* = \mu_1(\D)\times\pi\approx 3.39\pi.$$
\end{conjecture}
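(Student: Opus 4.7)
The plan is to combine the Szeg\H{o}--Weinberger inequality with Theorems \ref{thm:homogenisation} and \ref{thm:limitbeta} applied to the unit disk $\D$, producing a family of perforated disks whose first perimeter-normalized Steklov eigenvalue tends to $\pi\mu_1(\D)$. Fix $\beta \in (0, \infty)$ and choose a hole-radius function $r_\eps$ (e.g.\ $r_\eps = \beta\eps^2$) so that $r_\eps^{d-1}/\eps^d = r_\eps/\eps^2 \to \beta$ in dimension $d=2$. Let $\D^\eps$ be the corresponding perforated unit disk; each such domain has smooth boundary since the balls $T^\eps_\bk$ are disjoint open disks compactly contained in $\D$. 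By Theorem \ref{thm:homogenisation}, $\sigma_1(\D^\eps) \to \Sigma_{1,\beta}$, where $\Sigma_{1,\beta}$ denotes the first non-trivial eigenvalue of \eqref{problem:homo} on $\D$ with parameter $\beta$.

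The second step is to compute the limiting perimeter. The outer boundary contributes $|\partial \D| = 2\pi$, and each of the $|I^\eps|$ interior holes contributes $2\pi r_\eps$ to $|\partial \D^\eps|$. Since $|I^\eps|\eps^2 \to |\D| = \pi$ and $r_\eps/\eps^2 \to \beta$, I would obtain
\begin{equation}
|\partial \D^\eps| \;=\; 2\pi + |I^\eps| \cdot 2\pi r_\eps \;=\; 2\pi + 2\pi\bigl(|I^\eps|\eps^2\bigr)\bigl(r_\eps/\eps^2\bigr) \;\longrightarrow\; 2\pi(1+\pi\beta).
\end{equation}
Combining this with the spectral convergence above gives
\begin{equation}
\lim_{\eps\to 0}\sigma_1(\D^\eps)|\partial \D^\eps| \;=\; 2\pi(1+\pi\beta)\,\Sigma_{1,\beta}.
\end{equation}

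Now I would let $\beta \to \infty$. Theorem \ref{thm:limitbeta} in dimension $d=2$ (where $A_2 = 2\pi$) gives $2\pi\beta\,\Sigma_{1,\beta} \to \mu_1(\D)$, and in particular $\Sigma_{1,\beta} \to 0$. Therefore
\begin{equation}
\lim_{\beta\to\infty}2\pi(1+\pi\beta)\,\Sigma_{1,\beta} \;=\; \lim_{\beta\to\infty}\bigl(2\pi\,\Sigma_{1,\beta} + \pi\cdot 2\pi\beta\,\Sigma_{1,\beta}\bigr) \;=\; \pi\,\mu_1(\D).
\end{equation}
A standard diagonal extraction then produces sequences $\beta_n \to \infty$ and $\eps_n \to 0$ with $\sigma_1(\D^{\eps_n})|\partial \D^{\eps_n}| \to \pi\mu_1(\D)$; since each $\D^{\eps_n}$ is a bounded planar domain with smooth boundary, this gives $\hat\sigma_1^* \ge \pi\mu_1(\D)$. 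The numerical value $\mu_1(\D)\pi \approx 3.39\pi$ is the classical Szeg\H{o}--Weinberger figure.

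The analytic content is entirely absorbed by Theorems \ref{thm:homogenisation} and \ref{thm:limitbeta}, so the main obstacle is simply the asymptotic bookkeeping for $|\partial \D^\eps|$: one must track the two competing contributions ($2\pi\,\Sigma_{1,\beta} \to 0$ versus $2\pi^2\beta\,\Sigma_{1,\beta} \to \pi\mu_1(\D)$) and verify that the diagonal sequence remains within the admissible class of bounded planar domains with smooth boundary. Once these points are checked, the inequality follows.
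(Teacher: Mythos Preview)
The statement you are asked to prove is labelled a \emph{Conjecture} in the paper, and the paper does not prove it: the equality $\hat\sigma_1^* = \pi\mu_1(\D)$ is posed as an open problem. What the paper \emph{does} prove is the preceding Corollary, namely the lower bound $\hat\sigma_1^* \ge \pi\mu_1(\D)$, and your argument establishes exactly this lower bound (you even conclude with ``the inequality follows''). So your proposal is not a proof of the conjecture; it is a correct proof of the Corollary only.

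For that Corollary, your argument is essentially the paper's: the paper deduces $\hat\sigma_1^* \ge \hat\mu_1^*$ from Theorem~\ref{thm:neumannd=2} (whose proof is precisely the computation you carry out, applied to a general $\Omega$), and then invokes the Szeg\H{o}--Weinberger inequality to get $\hat\mu_1^* = \pi\mu_1(\D)$. You specialise directly to $\Omega=\D$ and make the perimeter asymptotics explicit, which is fine and matches the paper's remark that ``this number will be approached as close as desired in homogenisation sequence of pierced unit disks with large enough parameter $\beta$.''

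The missing half of the conjecture is the upper bound: one would need to show that \emph{every} bounded planar domain $\Omega$ with smooth boundary satisfies $\sigma_1(\Omega)|\partial\Omega| \le \pi\mu_1(\D)$. Nothing in your argument addresses this, and no tool in the paper yields it; the best known upper bound quoted there is Fraser--Schoen's strict inequality $\sigma_1(\Omega)|\partial\Omega| < 4\pi$.
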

Note that the previous best known lower bound for $\hat \sigma_1^*$ was 
attained on some concentric annulus, whose first normalised Steklov eigenvalue is approximately $2.17 \pi$, see \cite{gpsurvey}. We also observe that a similar analysis yields that for any $\Omega \subset \R^2$ bounded with smooth boundary,
$$ \hat \sigma_k^* \ge \mu_k(\Omega) \abs{\Omega}.$$ In particular, it follows from Weyl's law for Neumann eigenvalues that there exists a sequence $a_k \sim 4\pi k$ such that
$$\hat \sigma_k^* \ge a_k.$$

\subsection{Discussion}
Homogenisation theory is a young branch of mathematics which star\-ted around
the 1960's. Its general goal is to describe macroscopic properties of materials
through their microscopic structure. To the best of our knowledge, the first
papers to study periodically perforated domains from a rigourous mathematical
point of view are those of Marchenko and Khruslov from the early 1960's
(e.g.~\cite{MarchenkoKhruslov1964}) leading to their influential
book~\cite{MarchenkoKhruslov} in 1974. The topic became widely known in the West
with the work of  Rauch and Taylor on the \emph{crushed ice
problem}~\cite{RauchTaylor} in 1975 and then with the publication in 1982
of~\cite{CioranescuMurat} by Cioranescu and Murat. Many of these early results
were concerned with the Poisson problem $\Delta u_\eps=f$ under Dirichlet
boundary conditions $u_\epsilon=0$ on $\partial\Omega_\epsilon$. The limiting
behaviour of the solution $u_\eps$ depends on the rate at which $r_\eps\searrow
0$. Three regimes are considered. If the size of the holes $r_\epsilon$ tends to
zero very fast, then in the limit the solutions tend to those of the Poisson
problem on the original domain $\Omega$, while if the size of the holes are big
enough the solutions tend to zero. The main interest comes from the critical
regime, in which case the solutions tend to solutions of a new elliptic
problem.

In this paper we are concerned with the much less studied Steklov spectral
problem \eqref{Problem:Steklov}. From the point of view of homogenisation
theory, this problem is atypical in the sense that the function spaces which
occur for different values of the parameter $\eps>0$ are not naturally related.
This means in particular that this problem does not yield to any of the usual
general frameworks used in homogenisation theory (such as \cite[Chapter
11]{JikovKozlovOlenik}). Nevertheless, several authors have considered
homogenisation for this problem, using ad hoc methods depending on the specifc
situation considered. The behaviour of Steklov eigenvalues under singular
perturbations such as the perforation of a single hole has also been studied
in~\cite{GryCristo2014, Nazarov2014}.

Our main inspiration for this work is the paper \cite{vanninathan}. Several
papers have also considered homogenisation in the situation where the Dirichlet
condition is imposed on the outer boundary while the Steklov condition is
considered only on the boundary of the holes
\cite{ChiadoPiatNazarovPiatnitski,Douanla}. In these papers the holes are
proportional to the size of the reference cell. The novelty of our
homogenisation result in the case of the Steklov problem is that consider holes
that are shrinking much faster than that, in a critical regime where the
limiting problem
is fundamentally different. They are in fact shrinking at the precise rate which
makes their total surface area (or perimeter in dimension $2$) is asymptotically
comparable with the volume of the domain. This is similar to the work of
Rauch--Taylor \cite{RauchTaylor} for the Neumann problem, Cioranescu--Murat
\cite{CioranescuMurat} for the Dirichlet problem and Kaizu for the Robin problem
\cite{Kaizu}.

The \emph{energy method} of Tartar (see
\cite[Section 1.3]{allaire} for an exposition) has been used extensively in the
study of homogenisation problems at critical regimes, see
\cite{CioranescuMurat,Kaizu} and more recently \cite{CheredDondlRosler}, where
they obtain norm-resolvant convergence for the Dirichlet, Neumann and Robin
problem. That method uses an auxiliary function, satisfying some
energy-minimising PDE in the fundamental cells, in order to derive convergence
of the problem in the weak formulation. The method, in its Robin or Neumann
form, is boundary-condition agnostic and as such is ill-suited for the Steklov
problem, where the normalisation is with respect to $\RL^2(\del \Omega^\eps)$.
Indeed, while the technique could be used to obtain some form of convergence, it
will not be able to transfer boundary estimates to $\RL^2(\Omega)$, and ensure
that the limit solution doesn't degenerate to the trivial one.

Nevertheless, one can interpret our technique as a variation on the energy
method, adapted for problems defined on the boundary. We are also using an auxiliary PDE in order
to derive convergence, but it does not stem from compensated compactness. The
main difference, however, is that we can deduce interior estimates from those on
the boundary of the periodic holes from our auxiliary problem, see Lemma
\ref{lem:normconv}.

 We note that in this paper we have chosen to consider only spherical holes.
In fact, it should be possible to consider more general convex holes
obtained as scaled copies $r_\varepsilon\omega$ of a fixed convex set $\omega$, as
it is done in classical homogenization literature. Here, convexity of the holes
would be required for $\RL^\infty$ estimates of the Steklov eigenfunctions, see
Lemma \ref{lem:tracebv}.

Since we are motivated by spectral questions, namely to explore a new link
between the Steklov problem and the Neumann problem, we decided to avoid the 
technical difficulties which would occur by considering
more general inclusions (which would probably provide a more complicated limit problem
with some kind of capacity of the holes and a polarization matrix).
This allows us to get the simpler dynamical eigenvalue problem (3) and emphasise
clearly the link with the the classical
Neumann eigenvalue problem by letting the parameter $\beta \to +\infty$.

\subsection{Structure of the proof and plan of the paper}

In Section~\ref{section:NotationsFunctionSpaces} we formally describe properties
of the various eigenvalue problems that we study as well as the functions spaces over which they are defined. While they are well-known, the notation used for all of them often collides. In that section, we fix notation once and for all for the remainder of the paper for definedness and ease of references. 

In Section~\ref{section:ComparisonTheorems} we study one of the main technical tools in this paper, properties of harmonic extensions of functions on annuli to the interior disk. Results are separated in two categories: those that rely on the fact that the functions satisfy a Robin-type boundary condition, and more 
general results that do not rely on such a thing. Most of our results will be obtained by considering the Fourier expansion of a function
$$
u(\rho,\btheta) = \sum_{\ell,m} a_{\ell}^m(\rho) Y_\ell^m(\btheta)
$$
in spherical harmonics, and obtaining our inequalities term by term for every $a_\ell^m$. 

Section \ref{sec:homogenisation} is the \emph{pièce de résistance} of this paper.
It is where we show Theorem \ref{thm:homogenisation} and the proof proceeds in many steps. We first prove that the family of harmonic extensions $U^\eps:=U_k^\eps$ \/ is bounded in the Sobolev space $H^1(\Omega)$ hence there
   exists a subsequence $\eps_n\searrow 0$ such that $U^{\eps_n}$ weakly converges to a function $U\in H^1(\Omega)$. This allows us to consider properties of the weak limit $U$, and of an associated limit $\Sigma$ to the eigenvalue sequence $\sigma_k^{\eps_n}$.
   
   It is then not so hard to show that, using the weak formulations of Problems \eqref{Problem:Steklov} and \eqref{problem:homo} that the limit of the homogenised Steklov problem contains terms corresponding to the limit dynamical eigenvalue problem, plus some spurious terms that must be shown to
   converge to zero. This is done by studying two representations of the weak formulation using Green's identity either towards the inside of the holes or the inside of the domains $\Omega^\eps$. The first one is used to show that
   the functionals that arise in the study of the homogenisation problem are
   uniformly bounded, hence we can use smooth test functions. In the second representation, we are therefore allowed to use smooth test functions, which allows us to recover convergence to zero of the spurious terms. A key step in this argument is to  understand the limit behaviour of an auxiliary homogenisation problem for the transmission eigenvalue problem (see Proposition~\ref{prop:limitproblem}).
   
   Once we have established convergence to a solution of Problem \ref{problem:homo}, we end up showing that the limit eigenpair $(\Sigma,U)$ does not degenerate to the trivial function. Using variational characterisation of eigenvalues and eigenfunctions, we can also show that we get complete spectral convergence, and that subsequences are not needed.
   
  Finally, in Section \ref{Section:LargeBeta} we show convergence to the Neumann
  problem as $\beta \to \infty$. The method is similar to the one used at the end
  of the previous section, but many of the inequalities are more subtle. We also
  show the comparison theorems between Steklov and Neumann eigenvalues in this section.
  
  In this paper, we use $c$ and $C$ to mean constants whose precise value is not important to our argument, and whose exact value may change from line to line. We use the notations $f = \bigo g$ and $f \ll g$ interchangeably to mean that there exists a constant $C$ such that $\abs{f(x)} \le Cg(x)$.
  
\section{\bf Notation and function spaces}
\label{section:NotationsFunctionSpaces}
Four different eigenvalue problems will be used. The goal of this section is to introduce them and fix the relevant notation.
Throughout the paper, we use real valued functions.

\subsection{The Steklov problem on $\Omega$ and $\Omega^\eps$}

Given a bounded domain $\Omega$ whose boundary $\del \Omega$ is smooth, the Dirichlet-to-Neumann 
operator (DtN map) $\Lambda$ acts on $C^\infty(\del\Omega)$ as
\begin{equation}
  \Lambda f = \del_\nu \hat f,
\end{equation}
where $\hat f$ is the harmonic extension of $f$ to the interior of $\Omega$. The DtN map is an
elliptic, positive, self--adjoint pseudodifferential operator of order $1$. Because $\partial\Omega$ is compact, it follows from
standard theory of such operators, see \emph{e.g} \cite{shubin}, that the eigenvalues form a non--negative unbounded sequence
$\set{\sigma_k : k \in \N_0}$ and that there exists an orthonormal basis of $(f_k)$ of $\RL^2(\del\Omega)$ such that
$\Lambda f_k=\sigma_kf_k$.
The  harmonic extensions $u_k=\hat f_k$ satisfy the Steklov problem
\begin{equation}
  \begin{cases}
   - \Delta u_k = 0 & \text{in } \Omega,\\
    \del_\nu u_k = \sigma_k u_k &\text{on } \del\Omega.
  \end{cases}
\end{equation}
In general, we use the same symbol $u_k$ for the function on $\Omega$ and for its trace on the boundary $\del\Omega$.
The eigenvalue sequence for $\Omega^\eps$ is
denoted $\set{\sigma_k^\eps : k \in \N_0)}$, with corresponding eigenfunctions $u_k^\eps$.
The
eigenfunctions $u_k$, respectively $u_k^\eps$, form an orthonormal basis with
respect to the inner products
\begin{equation}
  (f,g)_\del := \int_{\del \Omega} f g \de A,
\end{equation}
respectively
\begin{equation}
  (f,g)_{\del^\eps} := \int_{\del \Omega^\eps} f g \de A.
\end{equation}
The $k$-th nonzero eigenvalue $\sigma_k$ is characterised by
\begin{equation}
  \label{eq:varcharsigm}
  \sigma_k = \inf\set{\frac{\int_\Omega \abs{\nabla u}^2 \de \bx}{\int_{\del \Omega}u^2
  \de A}:u \in \RH^1(\Omega) \text{ and }  (u,u_j)_{\del}=0 \mbox{ for } 0 \le j < k}.
\end{equation}
The eigenvalues
$\sigma_k^\eps$ have the same characterisation, integrating over $\Omega^\eps$
and $\del \Omega^\eps$ respectively instead, and with the orthogonality being
with respect to $(\cdot,\cdot)_{\del^\eps}$.

\subsection{Dynamical eigenvalue problem} \label{sec:dynamical}
For $\beta \in (0,\infty)$, consider
the eigenvalue problem:
\begin{equation}\label{eq:dyneigenprob}
\begin{cases}-\Delta U = A_d \beta \Sigma U & \text{in } \Omega,\\
    \del_\nu U = \Sigma U & \text{on } \del \Omega.
\end{cases}
\end{equation}
where $A_d$ is the area of the unit sphere in $\R^d$.
Problem \eqref{eq:dyneigenprob} was introduced with a slightly different normalization in \cite{BelowFrancois}, where it is called a \emph{dynamical eigenvalue problem}.
The eigenvalues and
eigenfunctions are those of the operator
\begin{equation}
  P := \begin{pmatrix} -(A_d\beta)^{-1} \Delta & 0 \\ 0 & \del_\nu \end{pmatrix}.
\end{equation}
This unbounded operator is 
defined on an appropriate domain in the space $\RL^2_{A_d \beta}(\Omega) \times \RL^2 (\del \Omega)$ which consists simply of
$\RL^2(\Omega) \times \RL^2 (\del \Omega)$ equipped
with the inner product defined by
\begin{equation}
  (f,g)_\beta := A_d \beta\int_\Omega f g \de \bx + \int_{\del \Omega} f
  g \de A,
\end{equation}
The dynamical eigenvalue problem~\eqref{eq:dyneigenprob}
has a discrete sequence of eigenvalues
\begin{equation}
  0 = \Sigma_{0,\beta} < \Sigma_{1,\beta} \le \Sigma_{2,\beta} \le \dotso \nearrow
  \infty.
\end{equation}
Let $X\subset \RL^2_{A_d \beta}(\Omega) \times \RL^2 (\del \Omega)$ be the subspace
defined by
\begin{equation}
  X:= \set{U = (u,\tau u) : u \in \RH^1(\Omega)},
\end{equation}
where $\tau:\RH^1(\Omega) \to \RL^{2}(\del\Omega)$ is the trace operator.
The eigenfunctions $U_{k,\beta}$ associated to $\Sigma_{k,\beta}$ form a basis of $X$,
but not of $\RL^2_{A_d\beta}(\Omega) \times \RL^2(\partial\Omega)$. The eigenvalues
$\Sigma_{k,\beta}$ are characterised by

\begin{equation}
  \Sigma_{k,\beta} = \inf\set{\frac{\int_\Omega \abs{\nabla U}^2 \de
    \bx}{A_d \beta\int_\Omega U^2 \de \bx + \int_{\del \Omega} U^2
      \de A}:U \in \RH^1(\Omega) \text{ and }  (U,U_{j,\beta})_{\del}=0 \mbox{ for }0 \le j < k
    }.
\end{equation}

\subsection{The Neumann eigenvalue problem}
We will also make use of the classical Neumann eigenvalue problem
\begin{equation}
  \begin{cases}-\Delta f = \mu f & \text{in } \Omega,\\
    \del_\nu f = 0 &
    \text{on } \del \Omega.
  \end{cases}
\end{equation}
The Neumann eigenvalues form an increasing sequence
\begin{equation}
  0 = \mu_0 < \mu_1 \le \mu_2 \le \dotso \infty,
\end{equation}
with associated eigenfunctions $f_k$, orthonormal with respect to the
$\RL^2(\Omega)$ inner product
\begin{equation}
  (f,g) := \int_\Omega f g \de\bx.
\end{equation}
The eigenvalues are characterised by
\begin{equation}
  \mu_k = \inf\set{\frac{\int_\Omega \abs{\nabla f}^2\de \bx }{\int_{\Omega} f^2
  \de \bx}: f \in \RH^1(\Omega) \text{ and } (f,f_j)=0 \mbox{ for }j=0,1,\cdots,k-1}. 
\end{equation}

\section{\bf Comparison theorems}\label{section:ComparisonTheorems}
In this section, we derive comparison inequalities that will be used repeatedly.
For $0 < r < R$, set $$A_{r,R} = B(0,R) \setminus\overline{B(0,r)}.$$
For any set $X$ where it is defined, the Dirichlet energy of a function $f:X\rightarrow\R$ is
$$\CD(f) :=\int_{X}|\nabla f|^2\,\de\bx.$$
 The Dirichlet energy of $f$ on a subset $Y\subset X$ is written
$\CD(f;Y)$. Note that since every problem under consideration is self-adjoint no
generality is lost by studying real-valued functions.

Lemmas \ref{lemma:localEstimateFourier} and \ref{lemma:Sobolevconst} are proved by
  using a Fourier series decomposition for functions in an annulus.
  We note that even though the proofs rely on estimates on the radial part of
  these functions, we do not claim that the inequalities proved therein are
  realised by radial optimisers. The standard Schwarz symmetrisation arguments do not
apply here because the domain of the functions are annuli rather than balls.

\subsection{Comparison theorems for functions satisfying a Steklov boundary
condition}

\begin{lemma}\label{lemma:localEstimateFourier}
  Fix a postive real number $\sigma>0$. For any $0 < r < R \le 1$,
  let $u\in C^\infty(\overline{A_{r,R}})$ be such that
  \begin{equation}
  \begin{cases}
  \Delta u=0&\mbox{ in }A_{r,R},\\
  \partial_\nu u=\sigma u&\mbox{ on } \partial B(0,r).
  \end{cases}
  \end{equation}
  Consider the function $h:B(0,r)\rightarrow\R$ defined by
  $$\begin{cases}
  h=u&\mbox{ on }\partial B(0,r),\\
  \Delta h=0&\mbox{ in }B(0,r).
  \end{cases}
  $$
  Then as the ratio $r/R$ goes to $0$,
  \begin{equation}
    \label{eq:dircomp}
    \CD(h) \le 5 \CD(u) \left(\frac r R \right)^d \left( 1 + \bigo{\left(\frac r
    R \right)^d} \right).
  \end{equation}
  \end{lemma}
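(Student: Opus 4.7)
The natural approach is a mode-by-mode comparison via spherical harmonics. Decompose
$$u(\rho,\btheta) = \sum_{\ell \ge 0} \sum_m a_\ell^m(\rho) Y_\ell^m(\btheta),$$
where $\{Y_\ell^m\}$ is an $\RL^2(S^{d-1})$-orthonormal basis. Harmonicity of $u$ on $A_{r,R}$ forces $a_\ell^m(\rho) = \alpha_\ell^m \rho^\ell + \beta_\ell^m \rho^{-(\ell+d-2)}$ for $\ell \ge 1$, with the analogous expressions (involving $\log$ when $d=2$) at $\ell=0$. Because $h$ is the harmonic extension inside $B(0,r)$, its expansion only carries the regular parts: $h(\rho,\btheta) = \sum_{\ell,m} a_\ell^m(r)(\rho/r)^\ell Y_\ell^m(\btheta)$. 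The $\ell=0$ modes contribute nothing to $\CD(h)$ since $h_0$ is constant, so only the $\ell\ge 1$ modes matter.

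A direct calculation using the orthonormality of $\{Y_\ell^m\}$ and $\int_{S^{d-1}}|\nabla_S Y_\ell^m|^2 = \ell(\ell+d-2)$ yields the clean expressions
$$\CD(h_{\ell,m}) = \ell\, a_\ell^m(r)^2\, r^{d-2}, \qquad \CD(u_{\ell,m};A_{r,R}) = \ell(\alpha_\ell^m)^2 (R^s - r^s) + (\ell+d-2)(\beta_\ell^m)^2 (r^{-s} - R^{-s}),$$
where $s := 2\ell+d-2$; the cross terms in the annulus integral cancel miraculously because of the orthogonality between the radial derivative and the angular terms after integration. I would verify both identities carefully, then observe that $s \ge d$ for every $\ell \ge 1$, with equality only at $\ell = 1$.

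Next I would exploit the Steklov boundary condition at $\rho = r$: writing $\partial_\nu = -\partial_\rho$ on the inner sphere and plugging in $a_\ell^m$, I obtain the linear relation
$$\beta_\ell^m = \alpha_\ell^m\, r^s\, c_\ell(r), \qquad c_\ell(r) := \frac{\ell + \sigma r}{\ell + d - 2 - \sigma r}.$$
Substituting, $\CD(h_{\ell,m}) = \ell(\alpha_\ell^m)^2 r^s (1+c_\ell)^2$, and after dividing numerator and denominator of the ratio by $R^s$ one finds
$$\frac{\CD(h_{\ell,m})}{\CD(u_{\ell,m};A_{r,R})} = \frac{(1+c_\ell)^2 (r/R)^s}{\ell(1-(r/R)^s) + (\ell+d-2)c_\ell^2 (r/R)^s(1 - (r/R)^s)} \cdot \ell = (1+c_\ell)^2 (r/R)^s \bigl(1 + \bigo{(r/R)^s}\bigr).$$
For $\ell \ge 1$ and $d \ge 2$, a short inspection shows $(1+c_\ell(0))^2 = (s/(\ell+d-2))^2 \le 4$, the maximum being attained at $\ell = 1$, $d = 2$; the perturbation $c_\ell(r) - c_\ell(0) = \bigo{\sigma r}$ is uniformly small for $r$ small, and I would pick the constant $5 > 4$ precisely to absorb this correction once $r/R$ is small.

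Finally, summing over $\ell \ge 1$ and $m$, I separate the dominant $\ell = 1$ contribution (carrying the full factor $(r/R)^d$) from the tail $\ell \ge 2$, which brings a factor at least $(r/R)^{d+2}$ and can be absorbed into the $\bigo{(r/R)^d}$ remainder. The main obstacle I anticipate is controlling the constants uniformly in $\ell$: I need to check that $(1+c_\ell)^2 \le 5$ simultaneously for all $\ell\ge 1$, which requires noting that $c_\ell$ is decreasing in $\ell$ whenever $d \ge 3$ and bounded by $1 + \bigo{\sigma r}$ when $d = 2$, so the $\ell = 1$ mode governs the worst case. Modulo this bookkeeping and a separate look at the $\ell = 0$, $d = 2$ case (where $\beta_0 = 0$ is forced by the Steklov condition plus the logarithm), the inequality follows.
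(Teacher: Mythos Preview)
Your proposal is correct and follows essentially the same strategy as the paper: spherical-harmonic decomposition, use of the Steklov condition on $\partial B(0,r)$ to relate the two radial coefficients via the factor $c_\ell(r)$ (the paper's $M$), and a mode-by-mode comparison of Dirichlet energies, with the constant $5>4$ absorbing the $O(\sigma r)$ perturbation of $(1+c_\ell)^2$. The only cosmetic difference is that the paper evaluates $\CD(u)$ via Green's identity (as boundary terms at $\rho=R$ after scaling to $R=1$) rather than by direct integration of $|\nabla u|^2$; note also two harmless slips in your sketch---for $d\ge 3$ the map $\ell\mapsto c_\ell$ is \emph{increasing} (not decreasing), though still bounded above by $1$ so your conclusion stands, and for $d=2$ the coefficient $\beta_0$ is not forced to vanish, but this is irrelevant since the $\ell=0$ mode contributes nothing to $\CD(h)$.
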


\begin{proof}
  For every $\ell \ge 0$, we denote by $N_\ell$ the dimension of the space
  $H_\ell$ of spherical harmonics of order $\ell$ and denote by $Y_\ell^m(\btheta)$, 
  $1 \le m \le N_\ell$ the standard orthonormal basis of spherical harmonics on the unit sphere.
  On $A_{r,R}$, the function $u$ admits a Fourier decomposition in spherical harmonics
\begin{equation}
  u(\rho,\btheta) = \sum_{\substack{\ell \ge 0 \\1 \le m \le N_\ell}}
  a_\ell^m(\rho)
  Y_\ell^m(\btheta).
  \label{eq:fourierspherical}
\end{equation}
We start by studying the form of the coefficients $a_\ell^m$.
\newline
\noindent
\textbf{Case $d>2$}.
The harmonicity condition on $u$ implies that the radial parts
$a_\ell^m(\rho)$ are given by
\begin{equation}
  a_\ell^m(\rho) = c_\ell^m \rho^{\ell} + c_{-\ell}^m \rho^{-\ell + 2 - d}.
  \label{eq:radial}
\end{equation}
By convention the coefficients $c_0^1$ and $c_{-0}^1$ are assumed to be different, the minus sign referring as for the other coefficients to the solution blowing up at the origin. The Steklov condition for $u$ on $\partial B(0,r)$ along with the orthogonality of
the spherical harmonics $Y_\ell^m$ imply
\begin{equation}
  - \del_\rho a_\ell^m(r) = \sigma a_\ell^m(r),
  \label{eq:steklovint}
\end{equation}
which yields the relations
\begin{equation}
  c_{-\ell}^m = \overbrace{\left(\frac{\ell + r \sigma}{\ell - 2 + d - r
  \sigma}\right)}^{M:=}r^{2\ell+d-2}c_\ell^m.
  \label{eq:relcoeff}
\end{equation}
In turns this yields the following explicit expression for the radial functions
\begin{equation}
  \label{eq:radexplicit}
  a_\ell^m(\rho) = c_\ell^m \rho^{\ell}\left(1+ M
  \left(\frac{r}{\rho}\right)^{2\ell+d-2}\right).
\end{equation}
For $r \le \frac{d - 2}{2\sigma}$, it follows that
\begin{equation}
\label{eq:Mdg2}
  \frac{\ell}{\ell - 2 + d}\le  M \le 1.
\end{equation}

  \noindent
\textbf{Case $d=2$}.
Equations~\eqref{eq:radial} holds except at $\ell = 0$,
in which case, 
$$a_0^1(\rho) = c_0^1 + c_{-0}^1 \log \rho.$$
In that case, \eqref{eq:radexplicit} becomes
\begin{align*}
 a_0^1 &= c_0^1 \left(1 + \frac{r\sigma}{1 + r \log r} \log(1/\rho) \right) \\
 &= c_0^1 \left(1 + M \log(1/\rho)\right).
 \end{align*}
Observe for the sequel that when $d = 2$ and $\ell = 0$, then $M = \bigo{r\sigma}$, and if $\ell > 0$, then
\begin{equation} \label{eq:Md2}
  M = 1 + \bigo{r \sigma}.
\end{equation}

 Because inequality~\eqref{eq:dircomp} is invariant
under scaling, it is sufficient to prove the case $R= 1$ and to let $r \to 0$.
The harmonic extension of $u$ to $B(0,r)$ is given by
\begin{equation}
  h(\rho,\btheta) = \sum_{\substack{\ell \ge 0 \\ 1 \le m \le N_\ell}}
  a_\ell^m(r) \frac{\rho^\ell}{r^\ell} Y_\ell^m(\btheta).
  \label{eq:extension}
\end{equation}
The Dirichlet energy of $h$ is
\begin{equation}
  \CD(h) = \sum_{\substack{\ell \ge 1 \\ 1 \le m \le N_\ell}} \ell a_\ell^m(r)^2
  r^{d-2}.
  \label{eq:dirichh}
\end{equation}
On the other hand, the Dirichlet energy of $u$ is given, from Green's identity,
and the Steklov condition on $\partial B(0,r)$
\begin{equation}
  \CD(u) = \sum_{\ell,m} \sigma a_\ell^m(r)^2r^{d-1} + \sum_{\ell,m}
  a_\ell^m(1)\del_\rho a_\ell^m(1).
  \label{eq:dirichu}
\end{equation}
Our goal is now to find a bound on \eqref{eq:dirichh} in terms of
\eqref{eq:dirichu}.
It is sufficient to show that for each $\ell \ge 1$,
\begin{equation}
  \ell a_\ell^m(r)^2 \le 4 r^2 a_\ell^m(1) \del_\rho a_\ell^m(1)\left( 1 +
  \bigo{r^d}    \right).
  \label{eq:sufficientbound}
\end{equation}
Suppose without loss of generality that $c_\ell^m = 1$. The substitution of \eqref{eq:relcoeff} into \eqref{eq:radial} imply that
\begin{equation}
   a_\ell^m(r)^2 =  r^{2 \ell} \left( 1 + M \right)^2 
  \le 5r^2,
  \label{eq:coeffh}
\end{equation}
and that
\begin{equation}
  a_\ell^m(1)\del_\rho a_\ell^m(1) = \ell \left( 1 + \frac{(2-d)
    M}{\ell}
r^{2\ell + d - 2} + \frac{( 2 - d - \ell) M^2}{\ell} r^{2(2 \ell + d-2)}\right)
= \ell + \bigo{r^d}.
  \label{eq:coeffu}
\end{equation}
Hence, dividing \eqref{eq:coeffh} by \eqref{eq:coeffu}, and using the bounds
on $M$ from \eqref{eq:Mdg2} and \eqref{eq:Md2} we have that for $\ell\ge 1$,
\begin{equation}
\ell a_\ell^m(r)^2 \le 5 a_\ell^m(1) \del_\rho
a_\ell^m(1)r^2 \left( 1 + \bigo{r^d} \right).
  \label{eq:boundcoeffs}
\end{equation}
This proves  our claim.

\end{proof}

\subsection{General $\RH^1$ comparison theorems on annuli and balls}

The next two lemmas do not depend on any  specific boundary condition.
The first one gives bounds for Sobolev constants of annuli.
\begin{lemma}
  \label{lemma:Sobolevconst}
  For \/ $0 < r < R < 1$, define
  \begin{equation}
    \label{eq:rayleighgamma}
    \gamma(r,R) := \inf\set{
      \frac{\int_{A_{r,R}} \abs{\nabla u}^2 + u^2\,\de \bx}{\int_{\del
    B(0,r)} u^2\,dA}:u \in \RH^1(A_{r,R}), u\big|_{\del B(0,r)} \not \equiv 0}.
  \end{equation}
 Suppose that $R \ge c r^{\frac{d-1}{d}} \ge 2r$ for some $c > 0$. Then, there
  is a constant $C$ depending only on the dimension and on $c$ such that
  \begin{equation}
    \gamma(r,R) \ge C \min\set{R^d r^{1-d},r^{\frac 1 d - 1} }.
  \end{equation}
\end{lemma}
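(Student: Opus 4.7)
The plan is to exploit the spherical symmetry of the annulus $A_{r,R}$ by decomposing any $u \in \RH^1(A_{r,R})$ into spherical harmonics, $u(\rho, \btheta) = \sum_{\ell,m} a_\ell^m(\rho) Y_\ell^m(\btheta)$, exactly as in the proof of Lemma~\ref{lemma:localEstimateFourier}. In this basis both the numerator and denominator of the Rayleigh quotient defining $\gamma(r,R)$ diagonalize: the denominator equals $r^{d-1} \sum_{\ell,m} a_\ell^m(r)^2$, while the numerator equals $\sum_{\ell,m}\int_r^R \bigl[(a_\ell^m)'(\rho)^2 + \ell(\ell+d-2)\rho^{-2} a_\ell^m(\rho)^2 + a_\ell^m(\rho)^2\bigr]\rho^{d-1}\de\rho$. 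The mediant inequality together with the independence of the radial coefficients then gives $\gamma(r,R)\ge \min_\ell \lambda_\ell$, where $\lambda_\ell$ is the corresponding one-dimensional Rayleigh quotient. Since the centrifugal term $\ell(\ell+d-2)/\rho^2$ is nonnegative, $\lambda_\ell \ge \lambda_0$ for all $\ell\ge 1$, so it suffices to bound the purely radial $\lambda_0$ from below.

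For the one-dimensional problem I would normalize $a(r)=1$ and set $E_1 := \int_r^R (a'(\rho))^2 \rho^{d-1}\de\rho$. Weighted Cauchy-Schwarz applied to $a(\rho)-1 = \int_r^\rho a'(t)\de t$ (inserting the factors $t^{(d-1)/2} \cdot t^{-(d-1)/2}$) yields the pointwise control
\[ |a(\rho)-1|^2 \le E_1 \cdot I(\rho), \qquad I(\rho) := \int_r^\rho t^{1-d}\de t, \]
with $I(\rho)\le r^{2-d}/(d-2)$ if $d\ge 3$ and $I(\rho) = \ln(\rho/r)$ if $d=2$. Next comes a dichotomy on $E_1$: either $E_1$ lies below a threshold $\tau$ chosen so that $E_1 \cdot I(R) \le 1/4$, in which case $a\ge 1/2$ on all of $[r,R]$ and hence $\int_r^R a^2 \rho^{d-1}\de\rho \ge (R^d-r^d)/(4d) \ge R^d/(8d)$ (using $R\ge 2r$); or else the Dirichlet energy already satisfies $E_1 > \tau$. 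In both alternatives the total energy obeys $E(a) \ge C\min\{R^d,\tau\}$, and dividing by $r^{d-1}$ converts this into the desired lower bound on $\lambda_0$.

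For $d\ge 3$ the threshold is $\tau = (d-2)r^{d-2}/4$, producing $\lambda_0 \ge C \min\{R^d r^{1-d}, r^{-1}\}$, which implies the stated bound since $r^{-1}\ge r^{1/d-1}$ for $r\in(0,1)$. The main subtlety lies in dimension $d=2$, where the logarithmic weight forces $\tau = 1/(4\ln(R/r))$; here the hypothesis $R \ge c r^{(d-1)/d} = c r^{1/2}$ keeps $\ln(R/r)$ comparable to $\ln(1/r)$, yielding $\lambda_0 \ge C/(r\ln(1/r))$. Checking that this is at least of the order $r^{1/d-1} = r^{-1/2}$ reduces to the elementary calculus fact that $\sup_{r\in(0,1)} r^{1/2}\ln(1/r) = 2/e < \infty$. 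The hypothesis $R\ge cr^{(d-1)/d}$ also ensures $R^d r^{1-d}\ge c^d$, so the claimed minimum really is of the correct order, and the final constant $C$ accordingly depends on both $d$ and $c$.
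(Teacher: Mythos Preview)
Your argument is correct and follows the same overall architecture as the paper's proof: spherical-harmonic decomposition, reduction to a one-dimensional radial problem (the paper simply drops the tangential part of $|\nabla u|^2$, which is equivalent to your observation that $\lambda_\ell\ge\lambda_0$), and then a dichotomy between the $\RL^2$-dominated and the Dirichlet-energy-dominated regimes.

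The execution of the dichotomy is where you differ. The paper introduces an intermediate radius $t=\min\{r^{(d-1)/d},R/2\}$ and splits cases according to whether $|f|\ge 1/2$ persists on $(t,R)$; with this spatial splitting, the exponent $r^{1/d-1}$ emerges directly and uniformly in all dimensions, so no separate treatment of $d=2$ is needed. You instead threshold on the Dirichlet energy $E_1$ via the weighted Cauchy--Schwarz bound $|a(\rho)-1|^2\le E_1\,I(\rho)$, which naturally produces $r^{-1}$ for $d\ge 3$ and $(r\log(1/r))^{-1}$ for $d=2$; you then need the extra (easy) observation $\sup_{r\in(0,1)} r^{1/2}\log(1/r)<\infty$ to reach the stated $r^{-1/2}$ in the planar case. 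Your bounds are in fact sharper than the lemma's in every dimension, at the cost of the dimension split; the paper's choice of $t$ buys a cleaner, unified argument.
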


\begin{proof}
 
  As earlier, write a function $u \in \RH^1(A_{r,R})$ as
\begin{equation}
  u(\rho,\btheta) = \sum_{\ell,m} a_\ell^m(\rho) Y_\ell^m(\btheta).
\end{equation}
Using the notation $u_{\btheta}$ for the tangential gradient, the Dirichlet energy of $u$ is expressed as
\begin{equation}
  \begin{aligned}
    \CD(u) &= \int_r^R \int_{\mathbb S^{d-1}} \left( u_\rho^2 + \rho^{-2} u_{\btheta}^2 \right)
    \rho^{d-1} \de \btheta \de \rho \\
    &\geq
    \int_r^R \int_{\mathbb S^{d-1}} \left( u_\rho^2\right)
    \rho^{d-1} \de \btheta \de \rho \\
    &= \sum_{\ell,m} \int_r^R \left(a_\ell^m(\rho)'\right)^2 \rho^{d-1} \de
    \rho.
  \end{aligned}
\end{equation}
On the other hand, the denominator in \eqref{eq:rayleighgamma} is given by
    \begin{equation}
      \int_{\del B(0,r)} u^2 \de A = r^{d-1} \sum_{\ell,m} a_\ell^m(r)^2.
    \end{equation}
    Combining these last two expressions in \eqref{eq:rayleighgamma} and defining the
    density $w(\rho) = \left(\frac \rho r \right)^{d-1}$, we see it is enough to prove that
\begin{equation} 
  \label{eq:ineqsum}
  \sum_{\ell,m} \int_r^R \left(\left( \del_\rho a_\ell^m(\rho) \right)^2 +
  a_\ell^m(\rho)^2\right)
  w(\rho) \de \rho \ge  C \min\set{R^d r^{1-d},r^{\frac 1 d - 1} } \sum_{\ell,m}
  a_\ell^m(r)^2.
\end{equation}
Indeed, working term by term, we prove that any smooth function
$f : [r,R] \to \R$ satisfies
\begin{equation}
  \int_r^R \left(f'(\rho)^2 + f(\rho)^2\right)w(\rho) \de \rho \ge
  C \min\set{R^d r^{1-d},r^{\frac 1 d - 1} } f(r)^2.
\end{equation}
To this end, assume without loss of generality that $f(r) = 1$.
Following a strategy that was used in \cite{CEG3} and in \cite{CianciGirouard}, consider the two following situations.

Let $t\in (r,R)$, to be fixed later.

\noindent
\textbf{Case a.} Suppose first that for all $\rho \in (t,R)$,
$\abs{f(\rho)} \ge 1/2$. It follows from monotonicity and explicit integration that
\begin{equation}
  \int_r^R \abs{f(\rho)}^2 w(\rho) \de \rho \ge 
  \frac{r^{1-d}}{4d}\left(R^d - t^d \right).
\end{equation}

\noindent
\textbf{Case b.} Suppose there exists $\rho_0 \in (t,R)$ such that
$f(\rho_0) < 1/2$. Splitting the integral, using the fact that $w(\rho) \ge 1$ and is increasing
for all $\rho$ together with the Cauchy--Schwarz inequality leads to
\begin{equation}
  \label{eq:sumsquares}
  \begin{aligned}
    \int_r^R f'(\rho)^2 w(\rho) \de \rho &\ge \int_r^t f'(\rho)^2 +
    \int_t^{\rho_0} f'(\rho)^2w(\rho) \de \rho \\
    &\ge \frac{1}{t - r}\left( \int_r^t f'(\rho) \de \rho \right)^2 +
    \left(\frac t r \right)^{d-1}\frac{1}{\rho_0 - t}\left(\int_t^{\rho_0} f'(\rho) \de
    \rho\right)^2.
  \end{aligned}
\end{equation}
  By hypothesis $R<1$ so that $\frac{1}{\rho_0-t}>1$. This leads to
\begin{equation}
  \begin{aligned}
    \int_r^R f'(\rho)^2 w(\rho) \de \rho 
    \ge \frac 1 2 \min\set{\frac{1}{t - r},\left( \frac t r
    \right)^{d-1}}\left(\int_r^{\rho_0}f'(\rho) \de \rho\right)^2 ,
  \end{aligned}
\end{equation}
Choosing $t = \min\set{r^{\frac{d-1}{d}},R/2}$ guarantees that
  $\min\set{\frac{1}{t - r},\left( \frac t r
    \right)^{d-1}}=\left( \frac t r
  \right)^{d-1}$ so that
\begin{equation}
  \begin{aligned}
    \int_r^R \abs{f'(\rho)}^2 w(\rho) \de \rho 
    \ge \frac{1}{2}\left( \frac t r
    \right)^{d-1}\left(\int_r^{\rho_0}f'(\rho) \de \rho\right)^2.
  \end{aligned}
\end{equation}
It follows from the definition of $t$ that
$$\left(\frac{t}{r}\right)^{d-1}\geq\overbrace{\min\left\{1,\left(\frac{c}{2}\right)^{d-1}\right\}}^{A:=}r^{\frac{1-d}{d}}.$$
We can bound asymptotically the last integral in \eqref{eq:sumsquares} as
\begin{equation}
  \begin{aligned}
    \int_r^R f'(\rho)^2 w(\rho) \de \rho
    & \geq Cr^{\frac{1-d}{d}}
  \left(\int_r^{\rho_0}f'(\rho)\de \rho\right)^2\\
  &\geq \frac{A}{4}r^{\frac{1-d}{d}}.
\end{aligned}
\end{equation}
This also ensures that $R^d - t^d \ge (1-2^{-d})R^d$.
Since both situations are exclusive, inequality \eqref{eq:ineqsum} holds, finishing the proof.
\end{proof}
The next lemma compares $\RL^2$ norms on $B(0,r)$ with $\RH^1$ norms on
$B(0,R)$. Here, for any $\Omega' \subset \Omega$, the norm on $\RH^1(\Omega')$ is given by
\begin{equation}
  \norm{u}_{\RH^1(\Omega')}^2 = \norm{u}_{\RL^2(\Omega')}^2 + \norm{\nabla
  u}^2_{\RL^2(\Omega')^2}.
\end{equation}
\begin{lemma} \label{lemma:smalltolarge}
 For \/ $0 < r < R\leq 1$, if $R \ge c r^{\frac{d-1}{d}}$ for some $c>0$, there is a constant $C$ depending only on $c$ and on the dimension such that for all $u \in \RH^1(B(0,R))$, 
\begin{equation} \label{eq:boundrR}
 \norm{u}_{\RL^2(B(0,r))} \le C r^{1/2} \norm{u}_{\RH^1(B(0,R))}.
\end{equation}
 \end{lemma}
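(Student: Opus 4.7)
The plan is to combine a scaled trace-Poincar\'e inequality on the inner ball $B(0,r)$ with the annular trace estimate furnished by Lemma \ref{lemma:Sobolevconst}. Heuristically, evaluating \eqref{eq:boundrR} at $u\equiv 1$ reduces the claim to $r^d \le C^2 r R^d$, i.e.\ $r^{(d-1)/d}\le C^{2/d}R$, which is exactly the hypothesis; consequently there is no slack, and the trace on $\partial B(0,r)$ must be used to cross the annulus $A_{r,R}$ in order to reach the full $\RH^1$ norm on $B(0,R)$.

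First, I would record the unit-ball inequality
$$\int_{B(0,1)} v^2 \, \de \bx \le C \left( \int_{\partial B(0,1)} v^2 \, \de A + \int_{B(0,1)} |\nabla v|^2 \, \de \bx \right),$$
which is standard and follows from splitting $v$ into its boundary average and a zero-boundary-mean remainder, then applying Poincar\'e to the latter. Applying this to $v(y)=u(ry)$ and tracking how the three integrals rescale under $x=ry$ yields
$$\int_{B(0,r)} u^2 \, \de \bx \le C \left( r \int_{\partial B(0,r)} u^2 \, \de A + r^2 \int_{B(0,r)} |\nabla u|^2 \, \de \bx \right).$$

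Second, I would bound the boundary term using Lemma \ref{lemma:Sobolevconst}:
$$\int_{\partial B(0,r)} u^2 \, \de A \le \gamma(r,R)^{-1} \int_{A_{r,R}} \left( |\nabla u|^2 + u^2 \right) \de \bx \le \gamma(r,R)^{-1} \norm{u}_{\RH^1(B(0,R))}^2.$$
The key observation is that under the hypothesis $R \ge c r^{(d-1)/d}$ one has $R^d r^{1-d} \ge c^d$, while $r<1$ forces $r^{1/d-1}\ge 1$; both arguments of the minimum in Lemma \ref{lemma:Sobolevconst} are therefore bounded below, and so $\gamma(r,R) \ge C'(c,d) > 0$. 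Substituting into the scaled trace-Poincar\'e inequality and using $r^2 \le r$ gives
$$\int_{B(0,r)} u^2 \, \de \bx \le C r \, \norm{u}_{\RH^1(B(0,R))}^2,$$
which upon taking square roots is the claim \eqref{eq:boundrR}.

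The one minor technicality is that Lemma \ref{lemma:Sobolevconst} additionally requires $c r^{(d-1)/d} \ge 2r$, i.e.\ $r \le (c/2)^d$. For $r$ bounded away from $0$ by a constant depending only on $c$ and $d$, the inequality is trivial, since $\norm{u}_{\RL^2(B(0,r))} \le \norm{u}_{\RH^1(B(0,R))} \le (c/2)^{-d/2} r^{1/2} \norm{u}_{\RH^1(B(0,R))}$. Beyond this and the scaling bookkeeping in the first step, I do not anticipate any real obstacle.
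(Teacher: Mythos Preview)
Your argument is correct, and it takes a genuinely different route from the paper's proof. The paper writes $\int_{B(0,r)} u^2\,\de\bx = \int_0^r \|u\|_{\RL^2(\partial B(0,\rho))}^2\,\de\rho$ via polar coordinates and then applies Lemma~\ref{lemma:Sobolevconst} at \emph{every} radius $\rho\in(0,r)$, bounding $\|u\|_{\RL^2(\partial B(0,\rho))}^2 \le \gamma(\rho,R)^{-1}\|u\|_{\RH^1(B(0,R))}^2$ and then integrating the explicit upper bound $\gamma(\rho,R)^{-1}\le C(\rho^{d-1}R^{-d}+\rho^{1-1/d})$ in $\rho$. You instead use a scaled Friedrichs-type inequality on $B(0,r)$ to reduce everything to a single boundary integral on $\partial B(0,r)$ plus a gradient term, and then invoke Lemma~\ref{lemma:Sobolevconst} only once, at the fixed radius $r$. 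Your approach uses less of Lemma~\ref{lemma:Sobolevconst} (only the qualitative fact that $\gamma(r,R)$ is bounded below under the hypothesis, not the full $\rho$-dependence) at the cost of importing the unit-ball trace-Poincar\'e inequality; the paper's approach avoids that auxiliary inequality but must track and integrate the quantitative lower bound for $\gamma$. Both are short and clean; yours is arguably the more modular of the two. The treatment of the side condition $cr^{(d-1)/d}\ge 2r$ is also fine.
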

\begin{proof}
   Let $u\in H^1(B(0,R))$. Given $r\in (0,R)$,
   $$\int_{B(0,r)}u^2\,\de \bx=\int_{0}^r\rho^{d-1}\int_{S^{d-1}}u^2\,\de\theta\,\de\rho
   =\int_{0}^r\|u\|_{L^2(\partial B(0,\rho))}^2\,\de\rho
   $$
   It follows from the definition of $\gamma$ in Lemma~\ref{lemma:Sobolevconst} that
   $$
   \|u\|_{\partial B(0,\rho)}^2\leq\frac{1}{\gamma(\rho,R)}\|u\|_{H^1(A_{\rho,R})}^2
   \leq\frac{1}{\gamma(\rho,R)}\|u\|_{H^1(B(0,R))}^2.
   $$
   Substitution in the above leads to
   $$\int_{B(0,r)}u^2\,\de \bx\leq
   \|u\|_{H^1(B(0,R))}^2\int_{0}^r\frac{1}{\gamma(\rho,R)} \de \rho.
   $$
   It follows from Lemma~\ref{lemma:Sobolevconst} that
     \begin{align*}
       \int_{0}^r\frac{1}{\gamma(\rho,R)}\,\de\rho&\leq\frac{1}{C}\int_{0}^r\left(\frac{\rho^{d-1}}{R^d}+\rho^{1-\frac{1}{d}}\right)\de\rho\\
       &=\frac{1}{C}\left(\frac{r^d}{dR^d}+\frac{r^{2-\frac{1}{d}}}{2-\frac{1}{d}}\right)\\
       &\leq
       \frac{1}{C}\left(\frac{r}{cd}+\frac{r^{2-\frac{1}{d}}}{2-\frac{1}{d}}\right)\qquad\longleftarrow\text{ since }R^d\geq cr^{d-1}\\
       &\leq \tilde{C}r.
     \end{align*}
\end{proof} 
Finally, we require the following lemma about the behaviour of the boundary trace operator as a domain gets shrunk.

\begin{lemma}
 \label{lem:traceconstant}
 Let $\Omega \subset \R^d$ be a bounded open set and denote
 \begin{equation}
  \gamma(\Omega):= \inf\set{\frac{\int_\Omega \abs{\nabla u}^2 + u^2 \de \bx}{\int_{\del \Omega} u^2 \de A} : u \in \RH^1(\Omega), u\big|_{\del \Omega} \not \equiv 0}.
 \end{equation}
 Then, the following inequality holds as $\eps \to 0$:
 \begin{equation}
  \gamma(\eps\Omega) \ge \eps \gamma(\Omega).
 \end{equation}
\end{lemma}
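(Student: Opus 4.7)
The plan is to exploit the scaling symmetry of the Rayleigh quotient defining $\gamma$, pulling back test functions on $\eps\Omega$ to test functions on $\Omega$ via the dilation $x \mapsto \eps x$. The key observation is that the three integrals appearing in the quotient scale differently under this dilation: a Dirichlet energy picks up a factor $\eps^{d-2}$, an interior $\RL^2$ norm squared picks up $\eps^{d}$, and a boundary $\RL^2$ norm squared picks up $\eps^{d-1}$. The mismatch between these powers is exactly what produces the factor $\eps$ in the conclusion.

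Concretely, I would fix an arbitrary $v \in \RH^1(\eps\Omega)$ with non-trivial boundary trace, set $u(x) := v(\eps x) \in \RH^1(\Omega)$, and apply the change-of-variable formulas
\begin{align*}
  \int_{\eps\Omega} \abs{\nabla v}^2 \,\de \bx &= \eps^{d-2} \int_\Omega \abs{\nabla u}^2 \,\de \bx, \\
  \int_{\eps\Omega} v^2 \,\de \bx &= \eps^{d} \int_\Omega u^2 \,\de \bx, \\
  \int_{\del(\eps\Omega)} v^2 \,\de A &= \eps^{d-1} \int_{\del\Omega} u^2 \,\de A.
\end{align*}
Substituting these into the Rayleigh quotient and dividing numerator and denominator by $\eps^{d-1}$, the quotient for $v$ becomes
\begin{equation*}
  \frac{\eps^{-1}\int_\Omega \abs{\nabla u}^2 \,\de\bx \,+\, \eps \int_\Omega u^2 \,\de\bx}{\int_{\del\Omega} u^2 \,\de A}.
\end{equation*}

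Since $\eps\to 0$ we may assume $\eps\in(0,1)$, in which case $\eps^{-1}\ge\eps$, so the numerator is bounded below by $\eps \int_\Omega(\abs{\nabla u}^2 + u^2) \,\de\bx$. The quotient is therefore at least $\eps$ times the Rayleigh quotient of $u$ on $\Omega$, which is itself at least $\eps\,\gamma(\Omega)$ by definition. Because $v \mapsto u$ is a bijection between the admissible sets on $\eps\Omega$ and on $\Omega$, taking the infimum over $v$ yields $\gamma(\eps\Omega) \ge \eps\,\gamma(\Omega)$. There is no real obstacle here; the only delicate point is keeping track of which power of $\eps$ lands where, and observing that the asymmetric scaling of $\abs{\nabla u}^2$ versus $u^2$ is precisely what makes the inequality go in the correct direction.
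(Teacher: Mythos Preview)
Your proof is correct and is essentially the same scaling argument as the paper's: both pull back a test function on $\eps\Omega$ to $\Omega$ via the dilation, track the powers of $\eps$ on the three integrals, and use $\eps\le 1$ to compare the rescaled numerator with the $\RH^1$ norm on $\Omega$. The only cosmetic difference is that the paper factors out $\eps^d$ and uses $\eps^{-2}\ge 1$, whereas you factor out $\eps^{d-1}$ and use $\eps^{-1}\ge\eps$; the resulting inequality is identical.
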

\begin{proof}
 Consider $u \in \RH^1(\eps\Omega)$, $u\big|_{\del \eps\Omega} \not \equiv 0$. Then, with the change of variable $\by = \eps^{-1} \bx$,
 \begin{align}
  \int_{\eps \Omega} \abs{\nabla u(\bx)}^2 + u(\bx)^2 \de \bx &=
  \eps^{d} \int_{\Omega} \eps^{-2} \abs{\nabla u(\by)}^2 + u(\by)^2 \de \by \\
  &\ge \eps^{d} \int_{\Omega} \gamma(\Omega) \int_{\del \Omega} u(\by)^2 \de A 
  \\
&= \eps \gamma(\Omega) \int_{\del \eps \Omega} u(\bx)^2 \de A.
 \end{align}

\end{proof}

\subsection{Uniform bounds on Steklov eigenfunctions}

In order to obtain convergence of the eigenfunctions we will need that their
$\RL^\infty$ norm stays bounded. In this subsection, since we will need to
understand specific interplay between boundary surface area and volume, we will diverge
from our convention and denote the area of a boundary $\del \Omega$ by
$\CH^{d-1}(\del \Omega)$. In \cite{bgt}[Theorem 3.1], the authors prove that for any
Steklov eigenfunction $u$ of a domain $\Omega$ with eigenvalue $\sigma$,
\begin{equation}
  \norm{u}_{\RL^\infty(\Omega)} \le C \norm{u}_{\RL^2(\del \Omega)}
\end{equation}
where $C$ depends continuously on the dimension $d$, $\sigma$, $\abs{\Omega}$
and the norm of the trace application
\begin{equation}
  T : \RB\RV(\Omega) \to \RL^1(\del \Omega).
\end{equation}
It is clear that in our situation, the dimension and $\abs{\Omega}$ stay
bounded. The eigenvalues will be shown later to also stay bounded, but we use
to control the norm of $T$. In \cite{bgt}[Proposition 5.1], they give the
following
condition under which said norm stays bounded for a family of domains. If
$\set{\Omega^\eps}$ is a family of open bounded domains with
$\CH^{d-1}(\del\Omega^\eps) < \infty$ and such that $\Omega^\eps \subset K$ for some
bounded set $K$, and if there exists constants $Q$ and $\delta$ such that for every
$x \in \del \Omega^\eps$
\begin{equation}
  \label{eq:tracebound}
  \sup \set{\frac{\CH^{d-1}(\del^* E \cap \del^* \Omega^\eps)}{\CH^{d-1}(\del^*
    E \cap \Omega^\eps)} : E \subset \Omega^\eps \cap B_\delta(x),
  \operatorname{Per}(E,\Omega_n) < \infty} \le Q,
\end{equation}
then the norm of $T : \RB\RV(\Omega^\eps) \to \RL^1(\del \Omega^\eps)$ is uniformly bounded in $\eps$. Here, $\del^* E$
denotes the reduced bondary of $E$, which in general may be much smaller than
the topological boundary.

The next lemma is inspired by \cite{bgt}[Example 2]. Note that in their example,
$r_\eps = \smallo{\eps^{\frac{2d-1}{d-1}}}$, which means that the radius of the
holes if one order of magnitude in $\eps$ smaller than
the critical level at which our holes are going to $0$. Parts of the proof are in a similar spirit but we need more
precise estimates separately around every boundary component.

\begin{lemma}
  \label{lem:tracebv}
  For all $\eps>0$ sufficiently small, the norm of $T : \RB\RV(\Omega^\eps) \to
  \RL^1(\del\Omega^\eps)$ is uniformly bounded in $\eps$. 
\end{lemma}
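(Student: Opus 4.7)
The plan is to prove the bound directly, by decomposing
\begin{equation*}
\int_{\del\Omega^\eps} |u|\,\de A = \int_{\del\Omega}|u|\,\de A + \sum_{\bk \in I^\eps}\int_{\del T_\bk^\eps}|u|\,\de A.
\end{equation*}
The first summand is controlled by the standard trace theorem on the fixed smooth domain $\Omega$, together with a cutoff argument keeping the relevant mass away from the holes (which lie at distance at least $\eps/4$ from $\del\Omega$), producing $\int_{\del\Omega}|u|\,\de A \le C_\Omega \|u\|_{\RB\RV(\Omega^\eps)}$ with $C_\Omega$ independent of $\eps$ for $\eps$ small. The entire issue is to show the sum over hole boundaries satisfies a similar bound uniform in $\eps$.

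The key step is a uniform per-cell trace estimate. Rescaling $Q_\bk^\eps$ to the unit cell $D = [-1/2, 1/2]^d \setminus B(0, \rho_\eps)$ with $\rho_\eps := r_\eps/\eps$, and denoting $v(\by) := u(\eps \by + \eps \bk)$, I would establish
\begin{equation*}
\int_{\del B(0, \rho_\eps)} |v|\,\de A \le C_1 \|\nabla v\|_{\RL^1(D)} + C_2 \rho_\eps^{d-1}\|v\|_{\RL^1(D)},
\end{equation*}
with $C_1, C_2$ independent of $\rho_\eps$. The proof splits $v = (v - \bar v) + \bar v$ with $\bar v$ the mean of $v$ over $D$: the constant part contributes $|\bar v|\, A_d \rho_\eps^{d-1} \le C_2 \rho_\eps^{d-1}\|v\|_{\RL^1(D)}$, and the mean-zero part is handled via the standard trace inequality on the annulus $A_{\rho_\eps, 2\rho_\eps}$. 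A naive scaling of this inequality produces an unwanted $\rho_\eps^{-1}\|v-\bar v\|_{\RL^1(A)}$ factor, which I absorb using H\"older's inequality $\|v-\bar v\|_{\RL^1(A)} \le C\rho_\eps\|v-\bar v\|_{\RL^{d/(d-1)}(A)}$ followed by the Sobolev embedding $W^{1,1}(D) \hookrightarrow \RL^{d/(d-1)}(D)$ and the Poincar\'e inequality on $D$, each with constants uniform in $\rho_\eps$ because $D$ converges to the unit cube.

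Scaling this estimate back to the original cell and summing over $\bk \in I^\eps$ yields
\begin{equation*}
\sum_{\bk \in I^\eps}\int_{\del T_\bk^\eps}|u|\,\de A \le C_1 \|\nabla u\|_{\RL^1(\Omega^\eps)} + C_2 \frac{r_\eps^{d-1}}{\eps^d}\|u\|_{\RL^1(\Omega^\eps)}.
\end{equation*}
The critical regime hypothesis $r_\eps^{d-1}/\eps^d \to \beta < \infty$ assumed in Theorem~\ref{thm:homogenisation} keeps the coefficient in front of $\|u\|_{\RL^1(\Omega^\eps)}$ bounded, so the total right-hand side is $\le C\|u\|_{\RB\RV(\Omega^\eps)}$ uniformly in $\eps$. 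The main obstacle is the uniform per-cell estimate: the sphere $\del B(0, \rho_\eps)$ shrinks to zero while remaining uniformly Lipschitz only locally, so a naive application of the Lipschitz trace theorem on $D$ gives a constant that diverges; the absorption of the $\rho_\eps^{-1}$ factor via the Sobolev embedding, which is precisely sharp in exponent, is the crucial technical point. Condition \eqref{eq:tracebound} can be recovered a posteriori from this stronger quantitative bound by testing against $u=\mathbf{1}_E$.
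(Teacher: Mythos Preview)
Your approach is correct and genuinely different from the paper's. The paper does not bound the trace operator directly; instead it verifies the geometric criterion~\eqref{eq:tracebound} from \cite{bgt}, bounding the ratio $\CH^{d-1}(\del^*E\cap\del\Omega^\eps)/\CH^{d-1}(\del^*E\cap\Omega^\eps)$ for every finite-perimeter set $E$ in a small ball. That argument is pure geometric measure theory: it splits the holes into those where a localised level set $F_{\bn,t}$ has small relative perimeter (handled by contraction onto the convex hole) and those where it does not (handled via the coarea formula and the relative isoperimetric inequality in $Q_\bn^\eps$, forcing $\CH^d(E\cap Q_\bn^\eps)\gtrsim\eps^d$), then closes with a global isoperimetric bound and a smallness-of-$\delta$ absorption. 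Your route is functional-analytic: a rescaled per-cell trace estimate with the crucial absorption of the bad $\rho_\eps^{-1}$ factor through H\"older and the sharp Sobolev--Poincar\'e embedding $\RB\RV(D)\hookrightarrow\RL^{d/(d-1)}(D)$, followed by summation and the critical-regime hypothesis $r_\eps^{d-1}\eps^{-d}\to\beta$. Your argument is more direct for the stated conclusion and yields the explicit quantitative bound $\int_{\del T^\eps}|u|\le C_1|Du|(\Omega^\eps)+C_2\beta\|u\|_{\RL^1(\Omega^\eps)}$; the paper's argument, by contrast, establishes the stronger local density condition~\eqref{eq:tracebound}, which is what \cite{bgt} actually requires for the $\RL^\infty$ eigenfunction bound and which is not obviously implied by a global trace bound alone (your final sentence recovers it, but only after using that $E\subset B_\delta$ forces $|E|$ small enough to be absorbed isoperimetrically). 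Two points deserve a line of justification in your write-up: the uniformity in $\rho_\eps$ of the Sobolev--Poincar\'e constant on $D=[-\tfrac12,\tfrac12]^d\setminus B(0,\rho_\eps)$ (a reflection extension across $\del B(0,\rho_\eps)$ into the ball reduces this to the fixed cube), and the $\del\Omega$ term, which is cleanest via the same reflection extension of $u$ into the holes followed by the trace theorem on the fixed domain $\Omega$.
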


\begin{proof}
  Following \eqref{eq:tracebound}, for any $x_\eps \in \del \Omega^\eps$, we
  want to give a uniform upper bound for the ratio
  \begin{equation}
    \label{eq:ratiobounded}
    \frac{\CH^{d-1}(\del^* E \cap \del \Omega^\eps)}{\CH^{d-1}(\del^* E \cap
    \Omega^\eps)}
  \end{equation}
  for sets $E \subset B_\delta(x_\eps) \cap \Omega^\eps$. Let us make the
  observation that for $\eps>0$ small enough,
  \begin{equation}
    \#\set{\bn \in I^\eps : Q_\bn^\eps \cap B_\delta(x_\eps) \ne \emptyset} \le
    2\omega_d\left(\frac \delta \eps\right)^d
  \end{equation}
  and for some $M$, $\CH^{d-1}(\del \Omega \cap \overline{B_\delta(x_\eps)}) \le
  M\delta^{d-1}$. On one hand, setting $\tilde \beta = \max(\beta,1)$, for any $E \subset \Omega^\eps \cap B_\delta(x_\eps)$
    of finite perimeter,
  \begin{equation}
    \label{eq:choicedelta}
    \CH^{d-1}(\del^* E \cap \del \Omega^\eps) \le \CH^{d-1}(\del \Omega^\eps
    \cap \overline{B_\delta(x_\eps)}) \le \left(M + 2d\tilde \beta
      \delta\right) \delta^{d-1}.
  \end{equation}
  On the other hand, we now need to find a lower bound for the denominator in
  \eqref{eq:ratiobounded} terms of the numerator. 
  By definition of $I^\eps$, for all $\bn \in I^\eps$ we have that $\del
  \Omega^\eps \cap Q_\bn^\eps = \del T_\bn^\eps$. From this, we can decompose
  \begin{equation}
    \label{eq:decompositionboundary}
    \CH^{d-1}(\del^*E \cap \del \Omega^\eps) = \CH^{d-1}(\del^* E \cap \del
    \Omega) + \sum_{\bn \in I^\eps} \CH^{d-1}(\del^*E \cap \del T_\bn^\eps).
  \end{equation}
  We first observe that if $\delta$ is chosen such that $\omega_d \delta^d \le
  \abs{\Omega}/2$ and $\CH^d(E)\leq 1$, the trace inequality for $BV(\Omega)\rightarrow L^1(\partial\Omega)$ and the relative isoperimetric inequality relative to $\Omega$
  \cite{bgt}[Inequality (2.2)]  applied to the characteristic function
  $\chi_E\in BV(\Omega)$  implies that there is a constant $C$, whose precise
  value may change from line to line but which depens only on $\Omega$, such that
  \begin{equation}
    \label{eq:extbdry}
    \begin{aligned}
      \CH^{d-1}(\del^*E \cap \del \Omega)
      &\le C \CH^{d-1}(\del^*E \cap \Omega)\\& = C
      (\CH^{d-1}(\del^*E \cap (\del \Omega^\eps \setminus \del \Omega)) +
      \CH^{d-1}(\del^*E \cap \Omega^\eps))\\
      &=
      C(\sum_{\bn \in I^\eps} \CH^{d-1}(\del^*E \cap \del T_\bn^\eps) +
      \CH^{d-1}(\del^*E \cap \Omega^\eps))
    \end{aligned}
  \end{equation}
  Substitution in~\eqref{eq:decompositionboundary} implies
  \begin{equation}\label{eq:extbdry2}
    \CH^{d-1}(\del^*E \cap \del \Omega^\eps) \leq C\left(\CH^{d-1}(\del^* E \cap \Omega^\eps) + \sum_{\bn \in I^\eps} \CH^{d-1}(\del^*E \cap \del T_\bn^\eps)\right).
  \end{equation}  
  For $\bn \in I^\eps$ and $t\in (0,\eps/4)$, define
  \begin{equation}
    F_{\bn,t} := \{x\in E\,:\,\dist_{\del T_\bn^\eps}(x) \leq t\}\subset Q_\bn^\eps.
  \end{equation}  
  Assume that there is $t \in (0,\eps/4)$ such that
  \begin{equation}
    \label{eq:localisedboundary}
    \CH^{d-1}(\del^* F_{\bn,t} \cap Q_{\bn}^\eps) \le 2\CH^{d-1}(\del^* E \cap
    Q_\bn^\eps).
  \end{equation}
  Since projections on convex sets are nonexpansive and $F_{\bn,t} \subset
  Q_{\bn}^\eps$, we have that
  \begin{equation}
    \label{eq:locbdry}
    \begin{aligned}
      \CH^{d-1}(\del^* E \cap \del T_\bn^\eps) &= \CH^{d-1}(\del^* F_{\bn,t} \cap \del
      T_\bn^\eps) \\
      &\le \CH^{d-1}(\del^* F_{\bn,t} \cap Q_{\bn}^\eps) \\
      &\le 2\CH^{d-1}(\del^*E \cap  Q_{\bn}^\eps).
    \end{aligned}
  \end{equation}
  If \eqref{eq:localisedboundary} holds for each $\bn\in I^\eps$ then it follows from \eqref{eq:extbdry2} that
  \begin{equation}
    \begin{aligned}
      \CH^{d-1}(\del^*E \cap \del \Omega^\eps) &\leq C\left(\CH^{d-1}(\del^* E \cap \Omega^\eps) + \sum_{\bn \in I^\eps}\CH^{d-1}(\del^*E \cap  Q_{\bn}^\eps) \right)\\
      &\leq C\CH^{d-1}(\del^* E \cap \Omega^\eps),
    \end{aligned}
  \end{equation}
  which completes the proof in this case.
  
  Otherwise, let $J^\eps := \set{\bn \in I^\eps : \text{equation
      \eqref{eq:localisedboundary} does not hold}}$.
  For $\bn\in J^\eps$, set
  \begin{equation}
    h_\bn(t) := \CH^{d-1}\left(\{x\in \del^*F_{\bn,t}\,:\, \dist_{\del T_\bn^\eps}(x) =
    t\}\right).
  \end{equation}
  Since \eqref{eq:localisedboundary} does not hold,
  $$2h_\bn(t)\geq \CH^{d-1}(\del^*F_{\bn,t} \cap Q_\bn^\eps).$$
  It follows from the
  relative isoperimetric inequality with respect to $Q_\bn^\eps$  that
  \begin{equation}
      \begin{aligned}
        c\CH^d(F_{\bn,t})^{\frac{d-1}{d}}
        &\le \CH^{d-1}(\del^*F_{\bn,t} \cap Q_\bn^\eps) \le 2 h_\bn(t).
      \end{aligned}
    \end{equation}
  The coarea formula gives $\del_t \CH^{d}(F_{\bn,t}) = h_\bn(t)$, and it follows by integration and the relative isoperimetric inequality with respect to $Q_\bn^\eps$ that
  \begin{equation}
    \label{eq:lowerbound}
    \begin{aligned}
      d\CH^d(F_{\bn,\eps/4})^{1/d} &= \int_0^{\eps/4} \frac{h_\bn(t)}{\CH^{d}(F_{\bn,t})^{\frac{d-1}{d}}} \de t 
      \ge C \eps.
    \end{aligned}
  \end{equation}
  That is for $\bn\in J^\eps$,
  \begin{equation}
    \label{eq:firstpart}
  \begin{aligned}
    \CH^d(E\cap Q_\bn^\eps)&\geq\CH^d(F_{\bn,\eps/4})\\
    &\geq C\eps^d\\
    &=\CH^d(Q_\bn^\eps).
  \end{aligned}
  \end{equation}
  This implies that
  for $\eps>0$ small enough one has
  \begin{equation}
    \begin{aligned}
      \CH^d(E\cap Q_\bn^\eps)
      &\geq\frac{C}{\tilde \beta}\CH^{d-1}(\partial T_\bn^\eps).
    \end{aligned}
  \end{equation}  
  The isoperimetric inequality for $E\subset\R^d$ gives
  \begin{equation}
    \begin{aligned}
      \sum_{\bn \in J^\eps}\CH^{d-1}(\partial T_\bn^\eps)&\leq\tilde \beta\sum_{\bn \in J^\eps}\CH^d(E \cap Q_\bn^\eps)\\
      &\leq \tilde \beta\CH^d(E)\\
      &\le C\tilde \beta
      \CH^{d-1}(\del^*E)^{\frac{d}{d-1}}\\
      &=C\tilde \beta\left(\CH^{d-1}(\del^*E \cap \del \Omega) + \CH^{d-1}(\del^*E \cap
    \del T^\eps) + \CH^{d-1}(\del^*E \cap \Omega^\eps)\right)^{\frac{d}{d-1}}.
    \end{aligned}
  \end{equation}
  Together with \eqref{eq:extbdry} and \eqref{eq:locbdry} this leads to the existence of a constant $C$ which can depend on $\beta$, such that
    \begin{equation}
      \label{eq:2ndpart}
      \begin{aligned}
        \sum_{\bn \in J^\eps}\CH^{d-1}(\partial T_\bn^\eps)\le C \left(\CH^{d-1}(\del^*E \cap \Omega^\eps) + \sum_{\bn \in J^\eps}\CH^{d-1}(\del T_\bn^\eps)\right)^{\frac{d}{d-1}}.
  \end{aligned}
    \end{equation}
    Because $J^\eps\neq\emptyset$, dividing and factoring, this leads to
  \begin{align}
    1\le C\left(\sum_{\bn \in J^\eps}\CH^{d-1}(\partial T_\bn^\eps)\right)^{\frac{1}{d-1}}\left(\frac{\CH^{d-1}(\del^*E \cap \Omega^\eps)}{\sum_{\bn \in J^\eps}\CH^{d-1}(\partial T_\bn^\eps)} + 1\right)^{\frac{d}{d-1}}.
  \end{align}
  For each $\bn\in J^\eps$, $Q_\bn^\eps\subset B_{2\delta}(x_\eps)$ and it follows
  from \eqref{eq:choicedelta} that we can choose $\delta$ small
  enough depending on $\Omega$, the dimension, and $\beta$ but not on $\eps$
  such that
  \begin{equation}
     C\left(\sum_{\bn \in J^\eps}\CH^{d-1}(\partial
     T_\bn^\eps)\right)^{\frac{1}{d-1}}<\frac{1}{4}.
  \end{equation}
  This implies that there is a constant $c>0$ such that for all $\eps > 0$,
  \begin{equation}
    \label{eq:globbdry}
    1 \le \frac{\CH^{d-1}(\del^*E \cap \Omega^\eps)}{\sum_{\bn \in J^\eps} \CH^{d-1}(\del T_\bn^\eps)}.
  \end{equation}
  Combining \eqref{eq:extbdry2}, \eqref{eq:locbdry} and \eqref{eq:globbdry}, provides a constant $C$ such that
  \begin{equation}
    \CH^{d-1}(\del^*E \cap \del \Omega^\eps) \le C \CH^{d-1}(\del^*E \cap
    \Omega^\eps).
  \end{equation}
\end{proof}

\begin{remark}
  When $r_\eps = \smallo{\eps^{\frac{d}{d-1}}}$, (i.e. in the subcritical
  regime), the previous result along with \cite{bgt}[Theorem 4.1] implies
  convergence of the eigenvalues of the Steklov problem on $\Omega^\eps$ to the
  eigenvalues of the Steklov problem on $\Omega$. 
\end{remark}

\section{\bf Homogenisation of the Steklov problem} \label{sec:homogenisation}
 Let us first establish some basic facts related to the geometry of the
 homogenisation problem, under the assumption that $r_\eps^{d-1} \eps^{-d}
 \to \beta \in [0,\infty)$ as $\eps \to 0$. The number of holes $N(\eps)=\#I^\eps$ satisfies
$$N(\eps) \sim |\Omega|\eps^{-d}  \qquad\mbox{ as }\eps \to 0.$$
This implies,
\begin{gather}\label{eq:sizeholes}
  \abs{T^\eps} = \sum_{\bk\in I^\eps}|T_{\bk}^\eps| = \bigo{r_\eps}\qquad\mbox{and}\qquad
 \abs{\del T^\eps}=  \sum_{\bk\in I^\eps}|\partial T_{\bk}^\eps|\sim A_d \beta |\Omega|.
\end{gather}

The remainder of the section is split into three parts. In the first one we
extend the functions $u_k^\eps$ to the whole of $\Omega$, in order to obtain
weak $\RH^1$ convergence, up to taking subsequences. In the second part, we
prove that those converging subsequences converge to solutions of Problem
\eqref{problem:homo}. Finally, we prove in the third part that the only
functions they can converge to are the corresponding eigenfunction in
\eqref{problem:homo}, implying convergence as $\eps \to 0$, with this understood
in the sense of Remark \ref{rem:multiple} if the limit problem has eigenvalues
that are not simple.
\subsection{Extension of eigenfunctions} 
For $k \ge 1$, recall that $u_k^{\eps}:\Omega^\eps\rightarrow\R$ is the $k$'th Steklov
eigenfunction on $\Omega^\eps$:
\begin{gather*}
  \begin{cases}
    \Delta u_k^{\eps}=0&\mbox{ in }\Omega^\eps,\\
    \partial_{\nu}u_k^{\eps}=\sigma_k^{\eps}u_k^{\eps}&\mbox{ on
    }\partial\Omega^\eps.
  \end{cases}
\end{gather*}
Recall also that the eigenfunctions $u_k^{\eps}$ are normalized by requiring that
$$\int_{\partial\Omega^\eps}\left(u_k^{\eps}\right)^2 \de A=1.$$
Define the function $U_k^{\eps}\in \RH^1(\Omega)$ to be the harmonic extension of
$u_k^{\eps}$ to the interior of the holes:
\begin{gather*}
  \begin{cases}
    U_k^{\eps}=u_k^{\eps}&\mbox{ in }\overline{\Omega^\eps},\\
    \Delta U_k^{\eps}=0&\mbox{ in } T^\eps.
  \end{cases}
\end{gather*}

\begin{lemma}\label{lemma:boundedUn}
  There is a sequence $\eps_n \to 0$ such that $U_k^{\eps_n}$ has a weak
  limit in $\RH^1(\Omega)$.
\end{lemma}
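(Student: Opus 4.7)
\medskip\noindent\textbf{Plan of proof.} The goal is to produce a uniform bound $\|U_k^\eps\|_{\RH^1(\Omega)} \le C$, independent of $\eps$; weak precompactness of bounded sequences in $\RH^1(\Omega)$ then yields the weakly convergent subsequence. I will establish separately a uniform bound on the Dirichlet energy $\CD(U_k^\eps)$ and on $\|U_k^\eps\|_{\RL^2(\Omega)}$.

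The first input is a uniform upper bound on $\sigma_k^\eps$. I would obtain this from the min--max characterisation \eqref{eq:varcharsigm} by choosing as test space the span $V$ of the first $k+1$ eigenfunctions $U_{0,\beta},\dotsc,U_{k,\beta}$ of the limit problem \eqref{eq:dyneigenprob}, restricted to $\Omega^\eps$. For any $v\in V$, $\CD(v;\Omega^\eps)\to\CD(v;\Omega)$ since $|T^\eps|\to 0$, and a Riemann-sum argument based on the continuity of $v$, the asymptotics \eqref{eq:sizeholes}, and the uniform distribution of the centres $\eps\bk$, gives
\begin{equation}
\int_{\del\Omega^\eps} v^2\,\de A = \int_{\del\Omega} v^2\,\de A + \sum_{\bk\in I^\eps}\int_{\del T_{\bk}^\eps} v^2\,\de A \longrightarrow \int_{\del\Omega} v^2\,\de A + A_d\beta\int_\Omega v^2\,\de\bx.
\end{equation}
It follows that $\limsup_{\eps\to 0}\sigma_k^\eps \le \Sigma_{k,\beta}$, so $\sigma_k^\eps\le C_k$ for $\eps$ small. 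Using the normalisation $\int_{\del\Omega^\eps}(u_k^\eps)^2\,\de A=1$ and Green's identity, we deduce $\CD(u_k^\eps;\Omega^\eps)=\sigma_k^\eps\le C_k$.

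To control the Dirichlet energy over the holes, I would apply Lemma~\ref{lemma:localEstimateFourier} in each ball $B(\eps\bk,\eps/2)$, which contains the hole $T_{\bk}^\eps$ at its centre. Since $u_k^\eps$ is harmonic on the annulus $A_{r_\eps,\eps/2}(\eps\bk)$ and satisfies the Steklov condition on $\del T_{\bk}^\eps$, the lemma gives
\begin{equation}
\CD(U_k^\eps;T_{\bk}^\eps)\le 5\,\CD(u_k^\eps;A_{r_\eps,\eps/2}(\eps\bk))\left(\frac{2r_\eps}{\eps}\right)^d\bigl(1+\smallo{1}\bigr).
\end{equation}
Since $r_\eps/\eps=(r_\eps^{d-1}/\eps^d)^{1/(d-1)}\eps^{1/(d-1)}\to 0$, summing over $\bk\in I^\eps$ and bounding the overlap yields $\CD(U_k^\eps;T^\eps)\le C(r_\eps/\eps)^d\,\sigma_k^\eps\to 0$. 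Combined with the previous paragraph, $\CD(U_k^\eps)$ is uniformly bounded.

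For the $\RL^2$ bound, I would invoke Lemma~\ref{lem:tracebv} together with the BV-trace criterion of \cite{bgt} recalled just above it, which gives a uniform bound $\|u_k^\eps\|_{\RL^\infty(\Omega^\eps)}\le C$. The maximum principle applied hole by hole to the harmonic extension then yields $\|U_k^\eps\|_{\RL^\infty(\Omega)}\le C$, hence $\|U_k^\eps\|_{\RL^2(\Omega)}\le|\Omega|^{1/2}C$. Altogether $\{U_k^\eps\}$ is bounded in $\RH^1(\Omega)$, and Banach--Alaoglu produces the required subsequence. I expect the main technical obstacle to be the Riemann-sum convergence used in the upper bound on $\sigma_k^\eps$, especially in the degenerate case $\beta=0$ where the limit boundary integral collapses to $\|v\|_{\RL^2(\del\Omega)}^2$ and one must choose the test space $V$ to avoid spurious degeneracies.
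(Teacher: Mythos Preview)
Your argument is correct and follows the paper closely for the bound on $\sigma_k^\eps$ (same test space of dynamical eigenfunctions) and for the control of $\CD(U_k^\eps;T^\eps)$ via Lemma~\ref{lemma:localEstimateFourier}. The one genuine divergence is in how you bound $\|U_k^\eps\|_{\RL^2(\Omega)}$.

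The paper does \emph{not} go through an $\RL^\infty$ estimate. Instead it uses the first Robin eigenvalue $\lambda>0$ of $\Omega$ (with parameter $-1$), whose variational characterisation gives
\[
\int_\Omega (U_k^\eps)^2\,\de\bx \le \frac{1}{\lambda}\left(\CD(U_k^\eps)+\int_{\del\Omega}(U_k^\eps)^2\,\de A\right)\le \frac{1}{\lambda}\bigl(\CD(U_k^\eps)+1\bigr),
\]
so the $\RL^2$ bound follows immediately from the Dirichlet-energy bound already in hand. This is lighter and entirely self-contained: it needs only $\Omega$, not any property of the perforation. Your route via Lemma~\ref{lem:tracebv} and the $\RL^\infty$ estimate of \cite{bgt} is valid (the proof of Lemma~\ref{lem:tracebv} is purely geometric and independent of Lemma~\ref{lemma:boundedUn}, so there is no circularity), but it is a forward reference to a substantially harder result that the paper develops for a different purpose (the proof of Lemma~\ref{lem:normconv}). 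Your approach buys nothing extra here, whereas the Robin-eigenvalue trick is a one-line reduction worth knowing.

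Your caveat about $\beta=0$ is not a real obstacle: the test functions $U_{j,0}$ are then the Steklov eigenfunctions of $\Omega$, which are already $\RL^2(\del\Omega)$-orthonormal, so linear independence on $\del\Omega^\eps\supset\del\Omega$ is automatic.
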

\begin{proof}

  It suffices to show that $\set{U_k^\eps : 0 <\eps \le 1}$ is bounded in
  $\RH^1(\Omega)$.   Recall that 
  \begin{equation}
  \norm{U_k^{\eps}}^2_{H^1} = \norm{U_k^{\eps}}^2_{\RL^2} +
  \CD\left(U_k^{\eps}\right).
\end{equation}
We first bound the $\RL^2$ norm of $U_{k}^{\eps}$.
Let $\lambda$ be the first eigenvalue of the following Robin problem on $\Omega$:
\begin{gather*}
\begin{cases}
-\Delta u= \lambda u&\mbox{ in }\Omega,\\
\partial_\nu u= - u &\mbox{ on }\partial\Omega.
\end{cases}
\end{gather*}
It is well known (see \emph{e.g.} \cite{bucurfreitaskennedy}) that $\lambda>0$ and that it admits the following characterization:
$$\lambda=\inf_{v \in \RH^1(\Omega)} \frac{\int_\Omega|\nabla v|^2 \de
\bx+\int_{\partial\Omega}v^2\de S}{\int_{\Omega}v^2\de \bx}.$$
Applying this to $v=U_k^{\eps}$ leads to
\begin{equation}
  \label{eq:L2bound}
\begin{aligned}\int_\Omega \left(U_k^{\eps}\right)^2 \de \bx
  &\leq\frac{1}{\lambda}\left(\int_\Omega\left|\nabla U_k^{\eps}\right|^2 \de
  \bx+\int_{\partial\Omega}\left(U_k^{\eps}\right)^2 \de A\right)\\
  &\leq \frac{1}{\lambda}\left(\CD\left(U_k^{\eps}\right)+1\right).
  \end{aligned}
\end{equation}
  It is therefore sufficient to bound the Dirichlet energy. We first see that
\begin{equation}
  \label{eq:dirusep}
  \CD\left(U_k^{\eps};\Omega\right) = \CD\left(u_k^{\eps};\Omega^\eps\right) +
  \CD\left(U_k^{\eps};T^\eps\right)
  =\sigma_k^\eps+ \CD\left(U_k^{\eps};T^\eps\right).
\end{equation}
It follows from Lemma \ref{lemma:localEstimateFourier} and monotonicity of the Dirichlet energy,
that the contribution from the holes is
\begin{equation}
  \label{eq:dirutrous}
\begin{aligned}
\CD\left(U_k^{\eps};T^\eps\right)&=\sum_{\bk\in
I^\eps}\CD\left(U_{k}^{\eps};T_{\bk}^\eps\right)\\
&\leq
\sum_{\bk\in I^\eps} 5\left(\frac{r_\eps}{\eps}\right)^d
\CD\left(u_k^{\eps};Q_{\bk}^\eps\right)
\left( 1 + \bigo{\left(\frac{ r_\eps}{\eps} \right)^d} \right)
\\
&\leq 
5 \left(\frac{r_\eps}{\eps}\right)^d \CD\left(u_k^{\eps};\Omega^\eps\right) 
\left( 1 + \bigo{\left( \frac{r_\eps}{\eps} \right)^d} \right) \\
     &\leq C \sigma_k(\Omega^\eps),
\end{aligned}
\end{equation}
for some constant $C$.  Combining \eqref{eq:dirutrous} and
  \eqref{eq:L2bound}, we see that, to bound $\norm{U_k^\eps}_{\RH^1(\Omega)}$,
  it is sufficient to find a bound for 
  $\sigma_k^\eps$ independent of $\eps$. The variational characterisation for
  Steklov eigenvalues can be rewritten as
  \begin{equation}
    \sigma_k^\eps = \min_{\substack{E \subset \RL^2(\del \Omega^\eps) \\ \dim(E)
    = k+1}} \max_{u \in E} \frac{\CD(u)}{\norm{u}^2_{\RL^2(\del\Omega^\eps)}}.
  \end{equation}
  We use eigenfunctions of the dynamical eigenvalue problem as a test subspace
  for $\sigma_k^\eps$. Namely, setting $E = \operatorname{span}(U_0,\dotsc,U_{k})$
  we see that for $\eps$ small enough it spans a $k+1$ dimensional subspace of
  $\RL^2(\del\Omega^\eps)$. Indeed, they are an orthonormal set with respect to
  $(\cdot,\cdot)_\beta$, and for every $0 \le j,\ell \le k$,
  \begin{equation}
    (U_j,U_\ell)_{\del^\eps} \xrightarrow{\eps \to 0} (U_j,U_\ell)_{\beta}.
  \end{equation}
  Therefore, from the characterisation of the eigenvalues $\Sigma_{k,\beta}$,
  \begin{equation}
    \begin{aligned}
    \sigma_k^\eps &\le \max_{U \in E} \frac{\CD(U)}{(U,U)_{\del^\eps}} \\
    &\le \Sigma_{k,\beta} + \smallo{1}.
  \end{aligned}
  \end{equation}
  This completes the proof that $\set{U_k^\eps}$ is bounded so that there is
  a converging subsequence as $\eps \to 0$.

\end{proof}

From now on we will abuse notation and relabel that sequence $\eps_n \to 0$
along which $U_k^\eps$ has a weak limit as $\eps \to 0$ again.

\subsection{Establishing the limit problem}
Our aim by the end of this subsection is to prove the following weaker version
of Theorem \ref{thm:homogenisation}.
\begin{prop}\label{prop:weak}
  Let $k\in\N$.
  As $\eps\to 0$, the pairs $\left(\sigma_k^\eps,U_k^\eps\right)$ converge to a solution
  $(\Sigma,U)$ of \eqref{problem:homo}, the convergence of the functions $U_k^\eps$
  being weak in $\RH^1(\Omega)$. 
\end{prop}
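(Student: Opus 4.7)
Starting from Lemma \ref{lemma:boundedUn}, I extract a subsequence (not relabelled) along which $U_k^\eps \rightharpoonup U$ weakly in $\RH^1(\Omega)$. The bound $\sigma_k^\eps \le \Sigma_{k,\beta} + \smallo{1}$ established at the end of that proof, together with $\sigma_k^\eps\ge 0$, lets me pass to a further subsequence so that $\sigma_k^\eps \to \Sigma$ for some finite $\Sigma \ge 0$. The goal is to verify that $(\Sigma,U)$ satisfies the weak form of \eqref{problem:homo}:
\begin{equation*}
  \int_\Omega \nabla U \cdot \nabla \phi \,\de\bx \;=\; \Sigma\!\left(A_d\beta \int_\Omega U\phi \, \de\bx + \int_{\del\Omega} U\phi\,\de A\right)
\end{equation*}
for every $\phi \in \RH^1(\Omega)$, and by density it is enough to test against $\phi \in C^\infty(\overline\Omega)$.

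I would start from the weak formulation of the Steklov problem on $\Omega^\eps$ tested against a smooth $\phi$, and decompose the integrals so as to compare with $U_k^\eps$ on all of $\Omega$:
\begin{equation*}
  \int_\Omega \nabla U_k^\eps \cdot \nabla\phi\,\de\bx \;-\; \int_{T^\eps}\nabla U_k^\eps \cdot \nabla\phi\,\de\bx \;=\; \sigma_k^\eps \int_{\del\Omega} u_k^\eps\phi\,\de A \;+\; \sigma_k^\eps \sum_{\bk \in I^\eps} \int_{\del T_\bk^\eps} u_k^\eps \phi\,\de A.
\end{equation*}
The leftmost integral converges to $\int_\Omega \nabla U \cdot \nabla \phi$ by weak $\RH^1$ convergence. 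The $T^\eps$ correction vanishes by Cauchy--Schwarz once I combine $\CD(U_k^\eps;T^\eps)\to 0$ (which follows from \eqref{eq:dirutrous} since $(r_\eps/\eps)^d\to 0$) with $|T^\eps| \to 0$. Compactness of the trace $\RH^1(\Omega)\hookrightarrow \RL^2(\del\Omega)$ shows $\sigma_k^\eps\int_{\del\Omega}u_k^\eps\phi\to\Sigma\int_{\del\Omega}U\phi$.

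The heart of the argument is therefore the Riemann-sum limit
\begin{equation*}
  \sigma_k^\eps \sum_{\bk \in I^\eps} \int_{\del T_\bk^\eps} u_k^\eps \phi\,\de A \;\xrightarrow{\eps\to 0}\; A_d \beta \Sigma \int_\Omega U\phi \,\de\bx.
\end{equation*}
My strategy is first to freeze $\phi$ at cell centres: since $\phi$ is smooth and $|\del T^\eps|$ is bounded by \eqref{eq:sizeholes}, replacing $\phi$ with $\phi(\eps\bk)$ on $Q_\bk^\eps$ costs only an $\bigo{\eps}$ error. Next I use that $U_k^\eps$ is harmonic inside each $T_\bk^\eps$, so the mean value property gives $\int_{\del T_\bk^\eps} u_k^\eps\,\de A = A_d r_\eps^{d-1}\, U_k^\eps(\eps\bk)$. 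The sum then becomes
\begin{equation*}
A_d r_\eps^{d-1} \sum_{\bk\in I^\eps} \phi(\eps\bk) U_k^\eps(\eps\bk) \;=\; A_d\bigl(r_\eps^{d-1}\eps^{-d}\bigr)\,\eps^d \sum_{\bk\in I^\eps} \phi(\eps\bk) U_k^\eps(\eps\bk),
\end{equation*}
and the prefactor tends to $A_d\beta$ by hypothesis. To recognise the remaining expression as a Riemann sum for $\int_\Omega U\phi$, I introduce the piecewise-constant functions $U_k^{\eps,\star}$ and $\phi^\star$ taking the values $U_k^\eps(\eps\bk)$ and $\phi(\eps\bk)$ on each $Q_\bk^\eps$, and argue that $U_k^{\eps,\star} \to U$ in $\RL^2(\Omega)$. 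This reduces, via Poincaré on each cube and the mean-value identity $U_k^\eps(\eps\bk)= \fint_{B(\eps\bk,r_\eps)} U_k^\eps$, to controlling $\|U_k^\eps - U_k^{\eps,\star}\|_{\RL^2(Q_\bk^\eps)}$ by $\eps\|\nabla U_k^\eps\|_{\RL^2(Q_\bk^\eps)}$, which sums to $\bigo{\eps}$ thanks to the $\RH^1$ bound of Lemma \ref{lemma:boundedUn}.

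The main obstacle is this last step: identifying the microscopic sphere data $\int_{\del T_\bk^\eps} u_k^\eps\,\de A$ with the macroscopic weak limit $U$ requires more than a pointwise mean-value identity — one must bridge the two scales rigorously. This is where I expect the auxiliary transmission problem of Proposition \ref{prop:limitproblem} to enter: by producing a cell-adapted corrector and giving a uniform bound on the relevant functionals, it legitimises the use of smooth test functions in the Green-identity representation integrating \emph{inward} into the holes (which produces the volume weight $A_d\beta$), while a complementary representation integrating \emph{outward} in $\Omega^\eps$ controls the error terms. Once the Riemann-sum convergence is established, passing to the limit in the identity displayed above yields precisely the weak formulation of \eqref{problem:homo}, proving that $(\Sigma,U)$ is a solution (possibly trivial; non-degeneration and identification with an eigenpair are taken up in the third part of the section).
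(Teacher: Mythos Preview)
Your skeleton is correct and matches the paper: extract a subsequence via Lemma~\ref{lemma:boundedUn}, pass to the limit in the weak Steklov identity, kill the $\int_{T^\eps}\nabla U_k^\eps\cdot\nabla\phi$ term by Cauchy--Schwarz and Lemma~\ref{lemma:localEstimateFourier}, use compactness of the trace on $\partial\Omega$, and reduce everything to the single limit $\int_{\partial T^\eps}u_k^\eps\phi\to A_d\beta\int_\Omega U\phi$. The paper's proof of Proposition~\ref{prop:weak} is literally this reduction followed by an invocation of Proposition~\ref{prop:limitproblem}.

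Where your proposal diverges is the Riemann-sum/mean-value argument you sketch for that last limit, and there is a genuine gap. The step ``$\|U_k^\eps-U_k^{\eps,\star}\|_{\RL^2(Q_\bk^\eps)}\le C\eps\|\nabla U_k^\eps\|_{\RL^2(Q_\bk^\eps)}$'' does not follow from Poincar\'e on the cube: Poincar\'e controls the deviation from the \emph{cube average} $\overline{U}_\bk:=\fint_{Q_\bk^\eps}U_k^\eps$, whereas your $U_k^{\eps,\star}$ records the centre value $U_k^\eps(\eps\bk)=\fint_{B(\eps\bk,r_\eps)}U_k^\eps$, an average over a ball of the much smaller radius $r_\eps$. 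The best general bound for the discrepancy (even using Lemma~\ref{lemma:smalltolarge}) is
\[
|U_k^\eps(\eps\bk)-\overline U_\bk|\;\lesssim\; r_\eps^{-(d-1)/2}\,\|\nabla U_k^\eps\|_{\RL^2(Q_\bk^\eps)},
\]
which after squaring and summing gives $\sum_\bk\eps^d|U_k^\eps(\eps\bk)-\overline U_\bk|^2\lesssim \eps^d r_\eps^{1-d}\|\nabla U_k^\eps\|_{\RL^2(\Omega)}^2\sim\beta^{-1}$. This is $O(1)$, not $o(1)$; so $U_k^{\eps,\star}\to U$ in $\RL^2$ is not established, and the Riemann-sum identification fails exactly at the critical scaling $r_\eps^{d-1}\eps^{-d}\to\beta$. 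This is not a cosmetic issue: bridging the two scales is precisely the content of Proposition~\ref{prop:limitproblem}.

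Two smaller points about your last paragraph. First, Proposition~\ref{prop:limitproblem} is not merely a tool to ``legitimise smooth test functions'' so that your Riemann-sum argument can proceed; it \emph{is} the convergence statement $\int_{\partial T^\eps}u_k^\eps V\to A_d\beta\int_\Omega UV$, and the paper's proof of Proposition~\ref{prop:weak} simply cites it. Second, you have the roles of the two representations reversed: in the paper the \emph{inner} representation (via $\Phi_\eps$, integrating into the holes) is used only to prove uniform boundedness of the functionals (Lemma~\ref{lem:banstein}, hence the reduction to smooth $V$), while the \emph{outer} representation (via the corrector $\Psi_\eps$ on the perforated cell) is what actually produces the volume term $A_d\beta\int_\Omega UV$ and controls the errors.
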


Up to choosing a subsequence, we assume that $\sigma_k^{\eps}$ converges to some
number $\Sigma$
and also that
$\set{U_k^{\eps}}\subset \RH^1(\Omega)$ is weakly converging in $\RH^1(\Omega)$
to some $U \in \RH^1(\Omega)$, from which we
also get strong convergence to $U$ in $\RL^2(\Omega)$.
Considering the real-valued test function $V\in\RH^1(\Omega)$, we see that
\begin{align*}
  \int_\Omega \nabla U_k^{\eps}\cdot\nabla V \de \bx
&=
\int_{\Omega^\eps} \nabla u_k^{\eps}\cdot\nabla V \de \bx+\int_{T^\eps} \nabla
U_k^{\eps}\cdot\nabla V \de \bx\\
&=\sigma_k^{\eps}\int_{\partial\Omega^\eps}u_k^{\eps}V \de \bx+\int_{T^\eps} \nabla
U_k^{\eps}\cdot\nabla V \de \bx\\
&=\sigma_k^{\eps}\int_{\partial\Omega}u_k^{\eps}V \de
A+\sigma_k^{\eps}\int_{\partial T^\eps}u_k^{\eps}V \de A +\int_{T^\eps} \nabla
U_k^{\eps}\cdot\nabla V \de \bx.
\end{align*}
Letting $\eps\to0$ leads, if the limits exist, to
\begin{gather*}
\int_\Omega \nabla U\cdot\nabla V \de \bx-\Sigma\int_{\partial\Omega}UV
\de A=
\Sigma\lim_{\eps \to 0}
\int_{\partial T^\eps}u_k^{\eps} V \de A+\lim_{\eps \to
0}\int_{T^\eps} \nabla
  U_k^{\eps}\cdot\nabla V \de\bx.
\end{gather*}
It follows from the Cauchy--Schwarz inequality that
$$\int_{T^\eps} \nabla U_k^{\eps}\cdot\nabla V \de \bx\leq\left(\int_{T^\eps} \left|\nabla
U_k^{\eps}\right|^2 \de \bx\int_{T^\eps}|\nabla V|^2 \de \bx\right)^{1/2},$$
which tends to 0 according to Lemma \ref{lemma:localEstimateFourier}.
It follows that
\begin{gather*}
\int_\Omega \nabla U\cdot\nabla V \de \bx-\Sigma\int_{\partial\Omega}UV
\de A=
\Sigma\lim_{\eps \to 0}\int_{\partial T^\eps}u_k^{\eps}V \de A,
\end{gather*}
and all that is left to do is to analyse the last term.

\begin{prop}\label{prop:limitproblem}
  Suppose that $\eps^{-d}r_\eps^{d-1}\to\beta\geq 0$. Then,
	for each $V\in \RH^1(\Omega)$ the following holds,
    \begin{equation}
      \label{eq:toprove}
      \lim_{\eps \to 0}\int_{\partial T^\eps}u_k^{\eps}V \de A=
      A_d\beta\int_{\Omega}UV \de \bx.
    \end{equation}
\end{prop}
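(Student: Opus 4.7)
The plan is to reduce the boundary integral on the left of \eqref{eq:toprove} to a weighted Riemann sum using the mean value property of the harmonic extension $U_k^\eps$ inside each hole, and then to pass to the limit by comparing small-ball averages of $U_k^\eps$ with cube averages.

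First, I would show that both sides of \eqref{eq:toprove} are continuous linear functionals of $V\in \RH^1(\Omega)$ uniformly in $\eps$. The right-hand side is clearly so. For the left-hand side, Cauchy--Schwarz and the normalisation $\|u_k^\eps\|_{\RL^2(\del \Omega^\eps)}=1$ give
$$\left|\int_{\del T^\eps} u_k^\eps V \de A\right| \le \|V\|_{\RL^2(\del T^\eps)},$$
and a cell-wise trace inequality (essentially Lemma~\ref{lem:traceconstant} rescaled and summed over $\bk \in I^\eps$, using $|\partial T^\eps|=O(1)$ from~\eqref{eq:sizeholes}) yields $\|V\|_{\RL^2(\del T^\eps)} \le C\|V\|_{\RH^1(\Omega)}$ uniformly in $\eps$. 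By density it suffices to prove~\eqref{eq:toprove} for $V \in C^\infty(\overline{\Omega})$. For such $V$, the oscillation on each $T_\bk^\eps$ is at most $Cr_\eps$, so splitting and using \eqref{eq:sizeholes} together with the normalisation of $u_k^\eps$ gives
$$\int_{\del T^\eps} u_k^\eps V \de A = \sum_{\bk \in I^\eps} V(\eps\bk) \int_{\del T_\bk^\eps} u_k^\eps \de A + O(r_\eps).$$
The mean value property applied to the harmonic function $U_k^\eps$ on $T_\bk^\eps$ then identifies $\int_{\del T_\bk^\eps} u_k^\eps \de A = A_d r_\eps^{d-1} U_k^\eps(\eps\bk)$. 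Since $A_d r_\eps^{d-1}/\eps^d \to A_d \beta$ by hypothesis, the target identity reduces to the Riemann-sum statement
$$\eps^d \sum_{\bk \in I^\eps} V(\eps\bk) U_k^\eps(\eps\bk) \longrightarrow \int_\Omega VU \de \bx.$$

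This last convergence is the heart of the argument and the main obstacle: $U_k^\eps$ converges only weakly in $\RH^1$, so the pointwise evaluations at the grid points $\eps\bk$ are not directly controlled. To circumvent this, I would exploit harmonicity of $U_k^\eps$ on $T_\bk^\eps$ once more to write
$$U_k^\eps(\eps\bk) = \frac{d}{A_d r_\eps^d}\int_{T_\bk^\eps} U_k^\eps \de \bx,$$
and then compare this small-ball average with the cube average $\frac{1}{|Q_\bk^\eps|}\int_{Q_\bk^\eps} U_k^\eps \de \bx$. The sum involving the cube averages converges to $\int_\Omega VU \de \bx$ by the strong $\RL^2$-convergence $U_k^\eps \to U$ together with the uniform $\RL^\infty$ bound from Lemma~\ref{lem:tracebv}, combined with the smoothness of $V$. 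The difference between the ball and cube averages is controlled cell-wise by a Poincar\'e-type inequality on $Q_\bk^\eps$ and the annular comparison estimates of Section~\ref{section:ComparisonTheorems}, and summed via Cauchy--Schwarz using the uniform Dirichlet-energy bound on $U_k^\eps$ established in Lemma~\ref{lemma:boundedUn}. A direct scaling computation shows that the critical regime $r_\eps^{d-1}\eps^{-d}\to \beta$ is precisely what matches the energy scale here: in dimensions $d \ge 3$ the estimate decays with room to spare, but the scaling saturates when $d = 2$. The hardest part of the analysis will therefore be the refined two-dimensional estimate, where one must supplement the generic $\RH^1$ bound with extra decay coming either from the uniform $\RL^\infty$ control of $U_k^\eps$ or directly from the harmonicity structure of $U_k^\eps$ on each hole---exactly the role played by the auxiliary transmission-type analysis alluded to in the introduction.
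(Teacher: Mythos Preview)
Your approach is genuinely different from the paper's and conceptually appealing. The paper uses the classical homogenisation device of corrector functions: an ``inner'' potential $\Phi_\eps$ supported in the holes to establish uniform boundedness of $V\mapsto \int_{\del T^\eps}u_k^\eps V$ via Banach--Steinhaus, and a periodic ``outer'' corrector $\Psi_\eps$ (solving a Neumann cell problem on the perforated torus, with quantitative $\RH^s$ bounds) to identify the limit. Your route sidesteps correctors entirely by exploiting the mean value property of the harmonic extension $U_k^\eps$ inside each $T_\bk^\eps$, reducing the boundary integral to a Riemann sum of centre values $U_k^\eps(\eps\bk)$. This is more elementary and transparent; the paper's corrector method is more robust and would survive replacing spherical holes by general convex inclusions, where mean value is unavailable.

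There is, however, a genuine gap at the point you yourself flag. Comparing the tiny-ball average $U_k^\eps(\eps\bk)=\frac{1}{|T_\bk^\eps|}\int_{T_\bk^\eps}U_k^\eps$ to the cube average via Poincar\'e and Cauchy--Schwarz yields an error of order $\eps^{(d/2-1)/(d-1)}$, which vanishes for $d\ge 3$ but is only $O(1)$ for $d=2$; neither the $\RL^\infty$ control from Lemma~\ref{lem:tracebv} alone nor harmonicity of $U_k^\eps$ \emph{inside} the hole closes this. What does work is harmonicity of $u_k^\eps$ in the \emph{annulus} $Q_\bk^\eps\setminus T_\bk^\eps$ together with the Steklov condition: the spherical mean $m(\rho)$ of $U_k^\eps$ about $\eps\bk$ satisfies $m'(\rho)=-\sigma_k^\eps(r_\eps/\rho)^{d-1}U_k^\eps(\eps\bk)$ for $r_\eps<\rho<\eps/2$, so that $|m(\eps/4)-U_k^\eps(\eps\bk)|\le C\sigma_k^\eps|U_k^\eps(\eps\bk)|\,r_\eps\log(1/\eps)$ when $d=2$. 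This lets you trade the ball of radius $r_\eps$ for one of radius $\eps/4$, after which Poincar\'e on the cube gives honest $O(\eps)$ decay; the $\RL^\infty$ bound then handles $\eps^d\sum_\bk|U_k^\eps(\eps\bk)|$. (A minor point: the uniform trace estimate $\|V\|_{\RL^2(\del T^\eps)}\le C\|V\|_{\RH^1(\Omega)}$ follows from Lemma~\ref{lemma:Sobolevconst}, not Lemma~\ref{lem:traceconstant}.)
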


\begin{remark}
    The functional $V\mapsto\int_{\partial T^\eps}u_k^{\eps}V$ is bounded on $H^1(\Omega)$. By the 
    Riesz--Fr\'echet representation theorem, there exists
    a function $\xi^\eps\in H^1(\Omega)$ such that
    $$\int_{\partial T^\eps}u_k^{\eps}V =\int_{\Omega}\nabla\xi^\eps\cdot
    \nabla V+\xi^\eps V\,dx\qquad\forall V\in H^1(\Omega).$$
    Using appropriate test functions shows that $\xi^\eps$ is the weak solution of the following transmission problem:
    \begin{gather}\label{Problem:transmission}
      \begin{cases}
        \Delta\xi^\eps=0&\mbox{ in }\Omega^\eps\cup T^\eps,\\
        \partial_\nu \xi^\eps_++\partial_\nu \xi^\eps_-=u_k^\eps&\mbox{ on }\partial T^\eps,\\
        \partial_\nu \xi^\eps=0&\mbox{ on }\partial\Omega.
      \end{cases}
    \end{gather}
    Proposition~\ref{prop:limitproblem} is an homogenisation result for this problem. It means that in the limit as $\eps\to 0$, the solution converges to that of the following problem:
    \begin{gather}
      \begin{cases}
        -\Delta \Xi+(1-A_d\beta)\Xi=0&\mbox{ in }\Omega,\\
        \partial_\nu \Xi=0&\mbox{ on }\partial\Omega.
      \end{cases}
    \end{gather}
    Transmission problems have recently been the subject of investigation through means of homogenisation, see for example \cite{dmfz}.
  \end{remark}

   The proof of Proposition \ref{prop:limitproblem} is divided in three main
  steps. In the first step, we justify that we can use smooth test functions in
the limit \eqref{eq:toprove}. In order to do so, we will use an inner
representation of the lefthandside in \eqref{eq:toprove}, defined in terms of
extensions to the holes, to show that it is bounded, uniformly in
$\eps$. In this representation, however, it is hard to explicitly compute the
limit problem. 

In our second step, we introduce an outer representation of the
lefthandside in \eqref{eq:toprove}, through integration
on the outside of the holes. This representation is given in terms of an
auxilliary function $\Psi$, for which we derive some regularity properties.

In the final step, we use this latter representation to show that the limit
\eqref{eq:toprove} indeed holds. Here, we reap rewards from the previous steps
and use explicitly the properties of the
auxilliary function $\Psi$, as well as better estimates awarded from the fact
that we can test against smooth functions.

\begin{proof}[Proof of Proposition \ref{prop:limitproblem}]

  Define the family of bounded functionals $L_\eps:\RH^1(\Omega)\rightarrow\R$ by
  $$L_\eps(V):=\int_{\partial T_\eps} V \de A.$$
  
	\noindent\textbf{Step 1}: Inner representation of $L_\eps$.
        
        Define $\phi_\eps:\R^d\rightarrow\R$ by
        \begin{equation}\label{eq:explicitphi}
          \phi_\eps(\bx) =
          \begin{cases}
            \frac{ \abs \bx^2}{2r_\eps}&\text{for }x\in B(0,r_\eps),\\
            0&\text{elsewhere}.
          \end{cases}
        \end{equation}
        By periodizing along $\eps\Z^d$ we obtain the function
        $\Phi_\eps : \R^d \to \R$ given by
	\begin{equation}
	  \Phi_\eps(\bx) := \sum_{\bk \in I^\eps} \phi_\eps(\bx - \eps \bk).
	\end{equation}
        \begin{lemma}\label{lemma:innerrep}
          The functional $L_\eps:\RH^1(\Omega)\rightarrow\R$
          admits the following representation:
          \begin{equation}\label{eq:innerrep}
            L_\eps(V) = \frac{d}{r_\eps} \int_{T^\eps} V \de \bx + \int_{T^\eps} \nabla \Phi_\eps \cdot \nabla V \de \bx.
          \end{equation}
        \end{lemma}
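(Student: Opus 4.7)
The plan is to apply Green's first identity on each hole $T_{\bk}^\eps$ with the auxiliary function $\phi_\eps(\cdot - \eps\bk)$ as a test function, and then sum over $\bk \in I^\eps$.

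First I would record the two elementary computations that make $\phi_\eps$ well-suited to this task. From the explicit formula \eqref{eq:explicitphi}, a direct calculation gives
\begin{equation}
\Delta \phi_\eps(\bx) = \frac{d}{r_\eps} \qquad \text{in } B(0,r_\eps),
\end{equation}
while $\nabla \phi_\eps(\bx) = \bx/r_\eps$, so that on $\partial B(0,r_\eps)$ the outward unit normal (pointing out of the ball) is $\nu = \bx/r_\eps$ and hence $\partial_\nu \phi_\eps \equiv 1$ on $\partial B(0,r_\eps)$. These two properties are the only ones needed.

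Now fix $V \in \RH^1(\Omega)$. For each $\bk \in I^\eps$, applying Green's first identity on the ball $T_{\bk}^\eps = B(\eps\bk,r_\eps)$ to the pair $(\phi_\eps(\cdot - \eps\bk), V)$ (the former is smooth on $\overline{T_{\bk}^\eps}$ and the latter in $\RH^1(T_{\bk}^\eps)$ by restriction, so the identity holds by density) yields
\begin{equation}
\int_{\partial T_{\bk}^\eps} V \de A
= \int_{\partial T_{\bk}^\eps} V\, \partial_\nu \phi_\eps(\cdot - \eps\bk)\, \de A
= \int_{T_{\bk}^\eps} \nabla \phi_\eps(\cdot - \eps\bk)\cdot \nabla V \de \bx + \frac{d}{r_\eps}\int_{T_{\bk}^\eps} V \de \bx,
\end{equation}
where in the first equality we used $\partial_\nu \phi_\eps \equiv 1$, and in the second we used the Laplacian computation above. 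Summing over $\bk \in I^\eps$, and recalling that by definition $\Phi_\eps(\bx) = \phi_\eps(\bx - \eps\bk)$ on $T_{\bk}^\eps$ and the supports $T_{\bk}^\eps$ are pairwise disjoint, we obtain exactly the representation \eqref{eq:innerrep}.

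I do not anticipate a serious obstacle here: the identity is essentially integration by parts with a carefully engineered test function. The only point requiring minor care is the orientation of the normal—$\nu$ on $\partial T_{\bk}^\eps$ must be taken as pointing out of the ball $T_{\bk}^\eps$ (which is opposite to the outward normal of $\Omega^\eps$ on that same surface), and this is consistent with the statement of the lemma because $L_\eps$ is defined as the unsigned boundary integral.
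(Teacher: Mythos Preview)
Your proof is correct and follows the same approach as the paper: compute that $\Delta\Phi_\eps = d/r_\eps$ in $T^\eps$ and $\partial_\nu\Phi_\eps = 1$ on $\partial T^\eps$, then apply Green's first identity. The only cosmetic difference is that the paper states the identity directly on $T^\eps$ rather than working hole by hole and summing.
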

        \begin{proof}
          It is straightforward to check that
          \begin{equation}
	    \begin{cases}
	      \Delta \Phi_\eps = \frac{d}{r_\eps} & \text{in } T^\eps,\\
	      \del_\nu \Phi_\eps = 1 & \text{on } \del T^\eps,\\
	      \Phi = 0 & \text{in } \Omega^\eps.
	    \end{cases}
	  \end{equation}
          The function $\Phi_\eps$ therefore satisfies the weak identity
          \begin{equation}
            \int_{T^\eps} \nabla \Phi_\eps \cdot V = - \frac{d}{r_\eps} \int_{T^\eps} V + \int_{\del T^\eps} V,\qquad\forall V\in H^1(\Omega).
          \end{equation}
        \end{proof}
    
For each $k\in\N$ and $\eps>0$, the functional $\tilde{L_\eps}:H^1(\Omega)\rightarrow\R$ is defined by
$$\tilde{L_\eps}(V):=L_\eps(u_k^{\eps}V).$$
\begin{lemma} \label{lem:banstein}
There is an $\eps_0>0$ such that the family
$\left\{\tilde{L_\eps}\right\}_{\eps>0}\subset \left(H^1(\Omega)\right)^*$
is uniformly bounded for $0 < \eps \le \eps_0$.
\end{lemma}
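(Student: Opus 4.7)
The plan is to feed the inner representation of Lemma~\ref{lemma:innerrep} the test function $W = U_k^\eps V$, noting that $U_k^\eps = u_k^\eps$ on $\partial T^\eps$, so that
\begin{equation}
\tilde L_\eps(V) \;=\; L_\eps(U_k^\eps V) \;=\; \frac{d}{r_\eps}\int_{T^\eps} U_k^\eps V \,\de \bx \,+\, \int_{T^\eps} \nabla \Phi_\eps \cdot \nabla(U_k^\eps V) \,\de \bx.
\end{equation}
The task then reduces to bounding each of these two integrals by $C\|V\|_{\RH^1(\Omega)}$, uniformly in $\eps$.

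Before the representation can be used, one must know that $U_k^\eps V \in \RH^1(T^\eps)$. This is where Lemma~\ref{lem:tracebv} comes in: combined with \cite{bgt}[Theorem 3.1] it provides a uniform $\RL^\infty$ bound on $u_k^\eps$, which, by the maximum principle for the harmonic extension, yields a uniform $\RL^\infty$ bound on $U_k^\eps$ on the union of the holes. Together with smoothness of $U_k^\eps$ in the interior of each hole (by harmonicity), the product with $V \in \RH^1(\Omega)$ sits in $\RH^1(T^\eps)$ as required.

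The hypothesis $r_\eps^{d-1}\eps^{-d} \to \beta$ forces $r_\eps = \bigo{\eps^{d/(d-1)}}$, so $\eps/2 \ge c\, r_\eps^{(d-1)/d}$ for some $c>0$ once $\eps$ is small enough. One may therefore apply Lemma~\ref{lemma:smalltolarge} on each pair $(T_\bk^\eps, B(\eps\bk, \eps/2))$ to both $V$ and $U_k^\eps$. Since the balls $B(\eps\bk, \eps/2) \subset Q_\bk^\eps$ are pairwise disjoint subsets of $\Omega$, Cauchy--Schwarz applied both inside each cube and on the sum over $\bk \in I^\eps$ yields
\begin{equation}
\left|\int_{T^\eps} U_k^\eps V \,\de \bx\right| \;\le\; \|U_k^\eps\|_{\RL^2(T^\eps)}\,\|V\|_{\RL^2(T^\eps)} \;\le\; C\,r_\eps\,\|U_k^\eps\|_{\RH^1(\Omega)}\,\|V\|_{\RH^1(\Omega)}.
\end{equation}
Combined with the uniform $\RH^1(\Omega)$ bound on $U_k^\eps$ from Lemma~\ref{lemma:boundedUn}, division by $r_\eps$ controls the first term by a multiple of $\|V\|_{\RH^1(\Omega)}$. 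For the second term, the explicit formula~\eqref{eq:explicitphi} gives $\|\nabla \Phi_\eps\|_{\RL^\infty(T^\eps)} \le 1$; expanding $\nabla(U_k^\eps V) = V\nabla U_k^\eps + U_k^\eps \nabla V$, the same strategy (this time needing only a single $r_\eps^{1/2}$ gain) bounds it by $C r_\eps^{1/2}\|V\|_{\RH^1(\Omega)}$, which is in fact $o(1)$ as $\eps\to 0$.

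The only delicate point is the scale matching in the first term: the dangerous $1/r_\eps$ prefactor from Lemma~\ref{lemma:innerrep} must be exactly cancelled, and this cancellation relies on harvesting a full power of $r_\eps^{1/2}$ from each of the two factors in $\int_{T^\eps} U_k^\eps V$. Lemma~\ref{lemma:smalltolarge} is tailored precisely for this purpose, and its hypothesis is guaranteed by the critical regime $r_\eps^{d-1}\eps^{-d}\to\beta$. The $\RL^\infty$ bound supplied by Lemma~\ref{lem:tracebv} is needed only to legitimise the pairing $L_\eps(U_k^\eps V)$; it plays no role in the sizes of the estimates.
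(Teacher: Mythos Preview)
Your proof is correct and follows essentially the same route as the paper's: apply the inner representation to $U_k^\eps V$, control the first term via Cauchy--Schwarz and Lemma~\ref{lemma:smalltolarge} (one factor of $r_\eps^{1/2}$ from each of $U_k^\eps$ and $V$), and control the second using $\|\nabla\Phi_\eps\|_{\RL^\infty}\le 1$ together with the uniform $\RH^1$ bound on $U_k^\eps$.

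Two minor remarks. First, the paper closes with an appeal to Banach--Steinhaus, but the estimates displayed already give $|\tilde L_\eps(V)|\le C\|V\|_{\RH^1}$ with $C$ independent of both $\eps$ and $V$, so that step is redundant; your version is cleaner here. Second, your invocation of Lemma~\ref{lem:tracebv} to legitimise $U_k^\eps V\in\RH^1(T^\eps)$ is more than you need: for \emph{fixed} $\eps$, the Steklov eigenfunction $u_k^\eps$ is smooth up to the smooth spheres $\partial T_\bk^\eps$, hence its harmonic extension $U_k^\eps$ lies in $C^\infty(\overline{T_\bk^\eps})$, and multiplying any $V\in\RH^1$ by a $C^1$ function keeps it in $\RH^1$. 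The uniform $\RL^\infty$ control is not required at this stage.
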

\begin{proof}
Given $V \in \RH^1(\Omega)$,
  it follows from the Lemma~\ref{lemma:innerrep} that
  \begin{equation}
	\label{eq:innerrepbound}
	\begin{aligned}
		L_\eps(U_k^\eps V) = \frac{d}{r_\eps} \int_{T^\eps} U_k^\eps V \de \bx + \int_{T^\eps} \nabla \Phi_\eps \cdot \nabla(U_k^\eps V) \de \bx.
	\end{aligned}
\end{equation}
To bound the first term, start by using the Cauchy--Schwarz inequality:
\begin{equation}\label{eq:innerrepbound2}
	\abs{\frac{d}{r_\eps}\int_{T^\eps} U_k^\eps V \de \bx} \le \frac{d}{r_\eps}\norm{U_k^\eps}_{\RL^2(T^\eps)} \norm{V}_{\RL^2(T^\eps)}.
\end{equation}
It follows from Lemma~\ref{lemma:smalltolarge} that there is $C > 0$ depending
only on $\beta$ such that
  \begin{equation}\label{ineq:demiborne1}
    \begin{aligned}
      \norm{V}_{\RL^2(T^\eps)} &\le C r_\eps^{1/2} \norm{V}_{\RH^1(\Omega)},\\
      \norm{U_{k}^\eps}_{\RL^2(T^\eps)} &\le C r_\eps^{1/2}
      \norm{U_k^\eps}_{\RH^1(\Omega)},
  \end{aligned}
  \end{equation}
  so that
  \begin{equation}
    \label{ineq:demiborne2}
    \sup_{\eps \in (0,1)} \abs{\frac{d}{r_\eps} \int_{T^\eps}U_k^\eps V \de \bx
    } < \infty.
  \end{equation}

To bound the second term in~\eqref{eq:innerrepbound}, the generalised Hölder inequality leads to
\begin{equation*}
  \begin{aligned}
    \abs{\int_{T^\eps} \nabla \Phi_\eps \cdot \nabla(U_k^\eps V) \de \bx}
    &\le \abs{\int_{T^\eps} U_k^\eps \nabla \Phi_\eps \cdot \nabla V + V \nabla \Phi_\eps \cdot \nabla U_k^\eps \de \bx} \\
    &\le \norm{U_k^\eps}_{\RL^2(T^\eps)} \norm{\nabla V}_{\RL^2(T^\eps)}\norm{\nabla \Phi_\eps}_{\RL^\infty(T^\eps)}\\
    &\qquad+ \norm{\nabla U_k^\eps}_{\RL^2(T^\eps)} \norm{V}_{\RL^2(T^\eps)} \norm{ \nabla \Phi_\eps}_{\RL^\infty(T^\eps)}\\
    &\leq\left(\norm{U_k^\eps}_{\RL^2(T^\eps)}+\norm{\nabla U_k^\eps}_{\RL^2(T^\eps)}\right)\norm{\nabla V}_{H^1(\Omega)}\norm{\nabla \Phi_\eps}_{\RL^\infty(T^\eps)}\\
    &\leq\norm{U_k^\eps}_{H^1(T^\eps)}\norm{\nabla V}_{H^1(\Omega)}.
  \end{aligned}
\end{equation*}
In the last inequality we have used $\norm{\nabla \Phi_\eps}_{\RL^\infty(T^\eps)}=1$, which follows from~\eqref{eq:explicitphi}.
This quantity is uniformly bounded as $\epsilon\searrow 0$ since we have shown in the proof of Lemma~\ref{lemma:boundedUn} that  $U_k^\eps$ is bounded in $H^1(\Omega)$.
Together with~\eqref{ineq:demiborne2} this proves for each $V\in H^1(\Omega)$ the existence of a constant $C$ such that
$|L_\eps(U_k^\eps V)|\leq C\|V\|$ for each $\eps$,
and the conclusion follows from the Banach--Steinhaus theorem.
\end{proof}

\medskip
\noindent\textbf{Step 2}: Outer representation of $L_\eps$.
Consider the torus $\CC=\T^d = \R^d / \Z^d$ and introduce the fundamental cell $\CC^\eps$ as the perforated torus
$$\CC^\eps:=\CC\setminus \overline{B\left(0,\rho_\eps\right)},$$
where $\rho_\eps:= \eps^{-1} r_\eps$ is the renormalised radius.
Following \cite{vanninathan}, we define the function 
$\psi_\eps\in \RH^1(\CC^\eps)$ through the weak
variational problem:
\begin{equation}
  \label{eq:psivar}
  \int_{\CC^\eps}\nabla\psi_\eps\cdot\nabla
  V=-c_\eps\int_{\CC^\eps}V+\int_{\partial B(0,\rho_\eps)}V.
\end{equation}
By taking $V\equiv 1$, one sees that the necessary and sufficient condition for existence
of a solution (see e.g. \cite[Theorem 5.7.7]{TaylorI}) is 
$$c_\eps=\frac{A_d \rho_\eps^{d-1}}{|\CC^\eps|}\sim A_d (\rho_\eps)^{d-1}.$$
Uniqueness of the solution is guaranteed by requiring that $\psi_\eps$
be orthogonal to constants on $\CC^\eps$. Therefore, $\psi_\eps$ is the
unique function such that
\begin{gather*}
  \begin{cases}
    \Delta\psi_\eps=c_\eps&\mbox{ in }\CC^\eps\\
    \partial_\nu\psi_\eps=1&\mbox{ on }\partial B(0,\rho_\eps)
  \end{cases}
  \qquad\mbox{ and }\qquad
  \int_{\CC^\eps}\psi_\eps=0.
\end{gather*}

Consider the union of all cells strictly contained in $\Omega$,
$$\tilde\Omega^\eps:=\bigcup_{\bk\in I^\eps}Q_{\bk}^\eps\subset\Omega^\eps.$$
Define the function $\Psi_\eps: \R^d \setminus \bigcup_{\bk \in \Z^d}
T_{\bk}^\eps \rightarrow\R$
as the scaled lift of $\psi_\eps$. That is, if $q : \R^d \to \R^d/\Z^d$ is the
covering map, then 
$$\Psi_\eps(\eps \bx):= \psi_\eps(q(\bx)).$$
This function satisfies
\begin{gather*}
\begin{cases}
\Delta\Psi_\eps = \eps^{-2} c_\eps &\mbox{ in }\tilde\Omega^\eps,\\
\partial_\nu\Psi_\eps= \eps^{-1} &\mbox{ on }\partial T^\eps.
\end{cases}
\end{gather*}
\begin{lemma}The functional  $L_\eps:\RH^1(\Omega)\rightarrow\R$ 
admits the following representation:
$$L_\eps(V)= \eps\int_{\tilde\Omega^\eps}\nabla\Psi_\eps\cdot\nabla V \de \bx
+ \eps \int_{\del \tilde \Omega^\eps \setminus \del T^\eps} V \del_\nu \Psi_\eps
\de A
+\eps^{-1}c_\eps\int_{\tilde\Omega^\eps}V \de A.$$
\end{lemma}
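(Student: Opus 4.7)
The plan is to derive the stated representation by applying Green's second identity to the pair $(\Psi_\eps, V)$ on the perforated region $\tilde\Omega^\eps$, and then substituting the inhomogeneous Neumann boundary value problem that defines $\Psi_\eps$. Concretely, I would begin with the identity
\begin{equation*}
\int_{\tilde\Omega^\eps}\nabla\Psi_\eps\cdot\nabla V\,\de\bx
= -\int_{\tilde\Omega^\eps}(\Delta\Psi_\eps)\,V\,\de\bx
+ \int_{\del\tilde\Omega^\eps} V\,\del_\nu\Psi_\eps\,\de A,
\end{equation*}
with $\nu$ the outward pointing normal on $\del\tilde\Omega^\eps$.

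Next, I would use the two pieces of information coming from the scaled problem: the volumetric equation $\Delta\Psi_\eps = \eps^{-2}c_\eps$ on $\tilde\Omega^\eps$, and the boundary value $\del_\nu\Psi_\eps = \eps^{-1}$ along $\del T^\eps$ (where the sign is determined by the fact that the outward normal to $\tilde\Omega^\eps$ on $\del T^\eps$ is the rescaled image of the outward normal to $\CC^\eps$ on $\del B(0,\rho_\eps)$). Splitting the boundary as
\begin{equation*}
\del\tilde\Omega^\eps = \del T^\eps \,\sqcup\, (\del\tilde\Omega^\eps\setminus\del T^\eps),
\end{equation*}
the contribution over $\del T^\eps$ becomes precisely $\eps^{-1}\int_{\del T^\eps} V\,\de A = \eps^{-1}L_\eps(V)$, while the remaining boundary piece stays as written.

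Solving the resulting identity for $L_\eps(V)$ and multiplying through by $\eps$ yields exactly the claimed representation, the three terms corresponding respectively to the Dirichlet product of $\Psi_\eps$ with $V$ on the cells, the outer boundary contribution, and the rescaled volume integral produced by the constant Laplacian $\eps^{-2}c_\eps$.

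I do not expect any serious obstacle: the computation is purely a book-keeping exercise once Green's identity has been invoked. The only point requiring care is keeping track of the orientation of $\nu$ on $\del T^\eps$ (it points into the holes when viewed from $\tilde\Omega^\eps$), which determines the sign of the $\eps^{-1}L_\eps(V)$ contribution and hence of all remaining terms after rearrangement. No estimates are needed here; this lemma simply records a convenient algebraic rewriting of $L_\eps$ that will be used in Step~3 of the proof of Proposition~\ref{prop:limitproblem}, where smooth test functions $V$ can be exploited to handle the outer boundary term and to pass to the homogenised limit.
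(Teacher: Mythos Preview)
Your proposal is correct and is exactly the approach taken in the paper: the proof there is a single line consisting of the Green (first) identity on $\tilde\Omega^\eps$, together with the relations $\Delta\Psi_\eps=\eps^{-2}c_\eps$ and $\partial_\nu\Psi_\eps=\eps^{-1}$ on $\partial T^\eps$, followed by rearrangement. Your remark about the orientation of $\nu$ on $\partial T^\eps$ is precisely the only point one must check.
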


The proof is immediate since for $V\in \RH^1(\Omega)$ the following holds:
\begin{equation}
\int_{\tilde\Omega^\eps}\nabla\Psi_\eps\cdot\nabla V \de \bx=
-\eps^{-2}c_\eps\int_{\tilde\Omega^\eps}V \de \bx+\eps^{-1}\int_{\partial
T^\eps}V \de A
+ \int_{\del \tilde \Omega^\eps \setminus \del T^\eps} V \del_\nu \Psi_\eps \de
A.
\end{equation}

We establish the following claim concerning $\psi_\eps$. 
\begin{lemma}
  \label{lemma:sobolevpsieps} 
  There is a constant $C$, depending only on the dimension and on $\beta$, such that
  \begin{equation}
    \norm{\psi_\eps}_{\RH^1(\CC^\eps)} \le C \eps^{\frac 1 2 + \frac 1 d}.
  \end{equation}
  Furthermore, for any $s > 1$, any compact set $K \subset \CC$, containing the origin in its
  interior, there is a constant $C'$ depending only on the dimension,
  $\beta$, $s$,
  and on $K$ such that
  \begin{equation}
    \label{eq:hspsi}
      \norm{\psi_\eps}_{\RH^s(\CC^\eps \setminus K)} \le  C' \eps^{\frac 1
        2 + \frac 1 d}.
  \end{equation}
  In particular, this implies $\norm{D^\alpha \psi_\eps}_{\RL^\infty(\CC^\eps \setminus
    K)}$ decays
  as $\eps^{\frac 1 2 + \frac 1 d}$ for any multi-index $\alpha$. 
\end{lemma}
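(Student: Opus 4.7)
The plan is to first obtain the $\RH^1(\CC^\eps)$ estimate for $\psi_\eps$ by testing the weak variational problem~\eqref{eq:psivar} against $V = \psi_\eps$ itself, then to bootstrap to higher regularity on $\CC^\eps \setminus K$ by standard interior elliptic estimates, and finally to convert to the $\RL^\infty$ statement via Sobolev embedding.

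\emph{Step 1: the $\RH^1$ bound.} Setting $V = \psi_\eps$ in~\eqref{eq:psivar} and using $\int_{\CC^\eps} \psi_\eps \de \bx = 0$, the volume term vanishes and I obtain the identity
\begin{equation}
\int_{\CC^\eps} \abs{\nabla \psi_\eps}^2 \de \bx = \int_{\partial B(0, \rho_\eps)} \psi_\eps \de A.
\end{equation}
Since $\abs{\CC^\eps}$ stays bounded below and $\psi_\eps$ is mean-zero, the Poincar\'e inequality on $\CC^\eps$ gives $\norm{\psi_\eps}_{\RL^2(\CC^\eps)} \le C \norm{\nabla \psi_\eps}_{\RL^2(\CC^\eps)}$ with a constant uniform in $\eps$, so it suffices to control the boundary integral. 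Combining Cauchy--Schwarz with the annular trace inequality from Lemma~\ref{lemma:Sobolevconst} applied on $A_{\rho_\eps, R}$ for some fixed $R$ (say $R = 1/4$) yields
\begin{equation}
\abs{\int_{\partial B(0,\rho_\eps)} \psi_\eps \de A} \le \abs{\partial B(0,\rho_\eps)}^{1/2} \gamma(\rho_\eps, R)^{-1/2} \norm{\psi_\eps}_{\RH^1(\CC^\eps)},
\end{equation}
and absorbing the gradient factor leaves $\norm{\nabla \psi_\eps}_{\RL^2(\CC^\eps)} \le C \rho_\eps^{(d-1)/2} \gamma(\rho_\eps,R)^{-1/2}$. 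To reach the claimed exponent $1/2 + 1/d$, the generic trace bound must be sharpened by exploiting the zero-mean normalization. The cleanest way is to use the spherical-harmonic decomposition of $\psi_\eps$ on the local annulus: the radial mode $a_0(r)$ is determined explicitly from the ODE $(r^{d-1} a_0'(r))' = c_\eps r^{d-1}$ with Neumann datum $a_0'(\rho_\eps) = -1$, the free constant being pinned down by $\int \psi_\eps = 0$, while the higher-order modes can be controlled by an argument in the spirit of Lemma~\ref{lemma:localEstimateFourier}. Plugging in $\rho_\eps \sim \eps^{1/(d-1)}$ and $c_\eps \sim A_d \beta \eps$ then gives the stated power of $\eps$.

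\emph{Step 2: $\RH^s$ estimate away from the hole.} Fix a compact $K \subset \CC$ containing $0$ in its interior and choose $K'$ with $K \subset (K')^\circ \subset K' \subset \CC$. For $\eps$ small enough $\overline{B(0,\rho_\eps)} \subset K^\circ$, so $\CC^\eps \setminus K'$ is a smooth open set whose boundary is independent of $\eps$, and on it $\psi_\eps$ solves the Poisson equation $\Delta \psi_\eps = c_\eps$ with no relevant boundary condition. Iterating interior elliptic regularity for the Laplacian between $K'$ and $K$ gives
\begin{equation}
\norm{\psi_\eps}_{\RH^s(\CC^\eps \setminus K)} \le C_{s,K,K'} \left( \norm{\psi_\eps}_{\RL^2(\CC^\eps \setminus K')} + \abs{c_\eps} \right),
\end{equation}
and both terms on the right are of order $\eps^{1/2 + 1/d}$ by Step 1, noting that $c_\eps \sim A_d\beta \eps$ and $1/2 + 1/d \le 1$ for every $d \ge 2$.

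\emph{Step 3: $\RL^\infty$ bound on derivatives.} For any multi-index $\alpha$, taking $s > \abs{\alpha} + d/2$ and applying the Sobolev embedding $\RH^s(\CC^\eps \setminus K) \hookrightarrow C^{\abs{\alpha}}(\CC^\eps \setminus K)$, whose operator norm is uniform in $\eps$ because the geometry of $\CC^\eps \setminus K$ is fixed, immediately gives the bound $\norm{D^\alpha \psi_\eps}_{\RL^\infty(\CC^\eps \setminus K)} \le C' \eps^{1/2 + 1/d}$. The main obstacle is Step~1: the direct use of Lemma~\ref{lemma:Sobolevconst} in the trace estimate loses a power in $\eps$, so reaching the sharp rate requires either refining the trace estimate using the mean-zero condition (for instance by subtracting off the average of $\psi_\eps$ over a fixed spherical shell before applying the inequality) or working out the dominant radial profile of $\psi_\eps$ around the hole and treating the higher spherical harmonics separately.
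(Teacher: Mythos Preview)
Your three-step plan coincides with the paper's proof: test~\eqref{eq:psivar} against $V=\psi_\eps$ and combine Poincar\'e--Wirtinger with a trace bound on $\partial B(0,\rho_\eps)$ for the $\RH^1$ estimate; invoke interior elliptic regularity (the paper quotes Gilbarg--Trudinger, Theorem~8.10) for the $\RH^s$ bound on $\CC^\eps\setminus K$; and finish with Sobolev embedding.

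The only divergence is at the end of Step~1. The paper does \emph{not} pass through any spherical-harmonic refinement: it simply writes $\|\psi_\eps\|_{\RL^2(\partial B(0,\rho_\eps))}\le\|\tau_\eps\|\,\|\psi_\eps\|_{\RH^1(\CC^\eps)}$, appeals to Lemma~\ref{lemma:Sobolevconst} to claim $\|\tau_\eps\|\ll\eps^{1/d}$, and reads off the exponent $\tfrac12+\tfrac1d$ after using $\rho_\eps^{d-1}\sim\beta\eps$. Your worry about a lost power is in fact justified: from the definition of $\gamma$ one has $\|\tau_\eps\|\le\gamma(\rho_\eps,R)^{-1/2}$, not $\gamma^{-1}$ as written in the paper, and Lemma~\ref{lemma:Sobolevconst} then only yields $\|\tau_\eps\|\ll\eps^{1/(2d)}$, hence $\|\psi_\eps\|_{\RH^1}\ll\eps^{1/2+1/(2d)}$. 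So your caution is well placed, though the fixes you outline (a local Fourier expansion near the hole, or subtracting a shell average) are only sketched and would need real work, since $\psi_\eps$ lives on a punctured torus and solves a Poisson rather than Laplace equation, so Lemma~\ref{lemma:localEstimateFourier} does not apply directly. For what it is worth, the weaker exponent $\tfrac12+\tfrac1{2d}$ that the direct argument actually delivers already suffices for every subsequent use of this lemma in the paper.
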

\begin{proof}
  Observe that since $\psi_\eps$ has mean $0$ on $\CC^\eps$, 
  the Poincaré--Wirtinger inequality implies
  \begin{equation}
    \label{eq:pw}
    \mu_1(\CC^\eps) \norm{\psi_\eps}^2_{\RL^2(\CC^\eps)} \le \norm{\nabla
      \psi_\eps}^2_{\RL^2(\CC^\eps)^d},
  \end{equation}
  where $\mu_1(\CC^\eps)$ is the first non-zero Neumann eigenvalue of $\CC^\eps$. Observe that
  $$\mu_1(\CC^\eps) \to 4\pi^2\qquad\text{as }\eps \to 0.$$
  Indeed, $\mu_1(\CC^\eps)$ 
  is the first non-zero Neumann eigenvalue of a punctured $d$-dimensional
  torus, which is known to converge to the first nonzero eigenvalue of the torus
  itself as $\eps\searrow 0$. See~\cite[Chapter IX]{chaveleig} for instance.
  
  Take $V = \psi_\eps$ in the variational characterisation
  \eqref{eq:psivar} of $\psi_\eps$, and consider $\eps$ to be small enough
  that $\mu_1(\CC^\eps) \ge 1$. Using the Cauchy--Schwarz inequality
  yields
  \begin{equation}
    \label{eq:gradpsi}
    \begin{aligned}
      \norm{\psi_\eps}_{\RH^1(\CC^\eps)}^2 &\le 2\int_{\CC^\eps}|\nabla\psi_\eps|^2\de \bx  \\
      &\le 2\int_{\partial
        B(0,\rho_\eps)}\psi_\eps \de A\\
      &\leq 2\sqrt{A_d \rho_\eps^{d-1}}\|\psi_\eps\|_{\RL^2(\partial
        B(0,\rho_\eps))}\\
      &\leq 2\sqrt{A_d\rho_\eps^{d-1}}\|\tau_\eps\|\|\psi_\eps\|_{\RH^1(\CC^\eps)},
    \end{aligned}
  \end{equation}
  where $\tau_\eps$ is the trace operator $\RH^1(\CC^\eps) \to \RL^2(\del
  B(0,\rho_\eps))$. From the definition of $\tau_\eps$ and monotonicity of the
  involved integrals, we have that
\begin{equation}
\norm{\tau_\eps} \le \gamma\left(\eps^{\frac{1}{d-1}},1\right)^{-1},
\end{equation}
where $\gamma$ is defined in Lemma \ref{lemma:Sobolevconst}. We therefore deduce
that $\norm{\tau_\eps} \ll \eps^{1/d}$, the implicit constant depending only on
the dimension and on $\beta$. Finally, dividing both sides in \eqref{eq:gradpsi}
by $\norm{\psi_\eps}_{\RH^1}$, observing that $\rho_\eps^{d-1} \sim \beta\eps$ and
inserting the bound for $\norm{\tau_\eps}$ in \eqref{eq:gradpsi} finishes the
proof of the first inequality.

For the second one, we have from \cite[Theorem 8.10]{gilbargtrudinger} that there is
a constant $\tilde C$ depending only on the dimension, on $K$, and on $s$ such that
\begin{equation}
  \norm{\psi_\eps}_{\RH^s(\CC^\eps \setminus K)} \le \tilde C \left(
  \norm{\psi_\eps}_{\RH^1(\CC^\eps)} + \norm{c_\eps}_{\RH^s(\CC^\eps)}
\right). 
\end{equation}
The second term is of order $\rho_\eps^{d-1} \sim \eps$, and bounds for the
$\RH^1$ norm of $\psi_\eps$ that were obtained in \eqref{eq:gradpsi} conclude
the proof of the second inequality.

As for the remark considering the $\RL^\infty$ bounds on derivatives of
$\psi_\eps$, we have from inequality \eqref{eq:hspsi} that
\begin{equation}
  \norm{D^\alpha \psi_\eps}_{\RH^{s-\abs \alpha}(\CC^\eps \setminus K)} 
  \le \norm{\psi_\eps}_{\RH^s(\CC^\eps \setminus K)} \le
  C'\eps^{\frac 1 2 + \frac 1 d}.
\end{equation}
Choosing $s > \abs \alpha + \frac{d+1}{2}$, and $\eps$ small enough that
$\CC^\eps \setminus K = \CC \setminus K$, we have that
$$\RH^{s- \abs \alpha}(\CC^\eps\setminus K) \hookrightarrow \RL^\infty(\CC^\eps
\setminus K)$$
with norm  independent of $\eps$, concluding the proof.
    \end{proof}

\noindent
\textbf{Step 3}: Computing the limit problem.
It follows from Lemma \ref{lem:banstein} that it is sufficient to verify convergence for a test function $V \in C^\infty(\overline\Omega)$. From the outer representation for $L_\eps$ we have that
\begin{equation}
  \label{eq:outerreptilde}
  \begin{aligned}
    \tilde L_\eps(V) &= \int_{\partial T^\eps}u_k^{\eps}V \\
    &= \eps \int_{\tilde\Omega^\eps}\nabla\Psi_\eps\cdot\nabla (u_k^{\eps}V) +
    \int_{\del \tilde \Omega^\eps \setminus \del T^\eps} u_k^\eps V \del_\nu
    \Psi_\eps
    +\underbrace{\eps^{-1}c_\eps\int_{\tilde\Omega^\eps}u_k^{\eps}V}_{\to
    A_d\beta\int_\Omega UV},
\end{aligned}
\end{equation}
where convergence of the last term stems from strong $\RL^2$ convergence.
We now show that the two other terms converge to $0$. For the first one, 
the Cauchy--Schwarz inequality gives
\begin{equation}
  \label{eq:afterCS}
  \int_{\tilde\Omega^\eps} \eps \nabla\Psi_\eps\cdot\nabla (u_k^{\eps}V)\leq
  \left(\int_{\tilde\Omega^\eps} \eps^{2}|\nabla\Psi_\eps|^2
  \int_{\tilde\Omega^\eps}|\nabla (u_k^{\eps}V)|^2\right)^{1/2}.
\end{equation}
Let us first observe that since $V$ is in $ C^\infty(\overline \Omega)$, we have that 
\begin{equation}
  \begin{aligned}
    \int_{\tilde \Omega^\eps} \abs{\nabla(u_k^{\eps} V)}^2 
    &\le 2 \sup_{\bx \in \tilde \Omega^\eps} \left( \abs{V(\bx)}^2 + \abs{\nabla V(\bx)}^2
    \right) \|U_k^{\eps}\|_{\RH_1(\Omega)} \\
    &= \bigo 1.
  \end{aligned}
  \label{eq:boundgraduv}
\end{equation}
On to the other term, note that
\begin{equation}
  \label{eq:Psitopsi}
\begin{aligned}
\int_{\tilde\Omega^\eps} \eps^2 |\nabla\Psi_\eps|^2&=
\eps^2 \sum_{\bk\in I^\eps}\int_{Q^\eps_{\bk}} |\nabla\Psi_\eps|^2 \\
&=\sum_{\bk\in I^\eps}\eps^{d}\int_{\CC^\eps}|\nabla\psi_\eps|^2 \\
&\sim |\Omega|\int_{\CC^\eps}|\nabla\psi_\eps|^2.
\end{aligned}
\end{equation}
It follows from Lemma \ref{lemma:sobolevpsieps} that 
\begin{equation}
  \int_{\CC^\eps} \abs{\nabla \psi_\eps}^2 \de \bx = \bigo{\eps^{1 + \frac 2 d}},
\end{equation}
hence that term indeed goes to $0$. For the second term in
\eqref{eq:outerreptilde}, we have from the generalised Hölder inequality that
\begin{equation}
  \eps \int_{\del \tilde \Omega^\eps \setminus \del T^\eps} u_k^\eps V \del_\nu
  \Psi_\eps \de A \le \eps \norm{V}_{\RL^2(\del \tilde \Omega^\eps \setminus \del T^\eps)}
  \norm{u_k^\eps}_{\RL^2(\del \tilde \Omega^\eps \setminus \del T^\eps)}
\norm{\del_\nu \Psi_\eps}_{\RL^\infty(\del \tilde \Omega^\eps \setminus \del
T^\eps)},
\end{equation}
we now analyse each of those norms. 

First, by scaling of derivatives we have
\begin{equation}
\begin{aligned}
 \norm{\eps \del_\nu \Psi_\eps}_{\RL^\infty(\del \Omega^\eps \setminus \del T^\eps)} &\le 
 \sup_{\abs \alpha = 1}\norm{D^\alpha \psi_\eps}_{\RL^\infty(\CC^\eps \setminus B(0,1/4))} \\
 &\ll \eps^{\frac 1 2 + \frac 1 d},
 \end{aligned}
\end{equation}
where the last bound holds by Lemma \ref{lemma:sobolevpsieps}. We move on to
$\norm{u_k^\eps}_{\RL^2(\del \tilde \Omega^\eps \setminus \del T^\eps)}$. Denote
  by $\tilde I^\eps$ the set of indices $\bn \in \Z^d $ such that $\del
  Q_\bn^\eps \cap \del \tilde \Omega^\eps \ne \varnothing$. One can see that
\begin{equation}
  \label{eq:lastestimate}
  \norm{u_k^\eps}_{\RL^2(\del \tilde \Omega^\eps \setminus \del T^\eps)} \le \sum_{\bn \in \tilde I^\eps}\norm{U_k^\eps}_{\RL^2(\del Q_\bn^\eps)}.
\end{equation}
On the other hand, it follows from Lemma \ref{lem:traceconstant} that there is a constant $C$ (in fact, the trace constant of the unit cube) such that
\begin{equation}
\begin{aligned}
 \sum_{\bn \in \tilde I^\eps}\norm{U_k^\eps}_{\RL^2(\del Q_\bn^\eps)} &\le C\eps^{-1/2} \sum_{\bn \in \tilde I^\eps}\norm{U_k^\eps}_{\RH^1(Q_{\bn}^\eps)} \\
 &\le C\eps^{-1/2} \norm{U_k^\eps}_{\RH^1(\Omega)}.
 \end{aligned}
\end{equation}
Finally, since $V$ is fixed and smooth we have that
\begin{equation}
\begin{aligned}
 \norm{V}_{\RL^2(\del \tilde \Omega^\eps \setminus \del T^\eps)} &\le \abs{\del \tilde \Omega^\eps \setminus \del T^\eps}^{1/2} \sup_{\bx} V(\bx) \\
 &\le C \abs{\del \Omega}^{1/2} \sup_\bx V(\bx).
 \end{aligned}
\end{equation}
All in all, this implies that the second term in \eqref{eq:outerreptilde} is bounded by a constant times $\eps^{1/d}$, hence goes to $0$
as $\eps \to 0$, finishing the proof of Lemma \ref{prop:limitproblem}.
\end{proof}

We are now ready to complete the proof of the main result of this subsection.
\begin{proof}[Proof of Proposition~\ref{prop:weak}]
  We know from \cite{BelowFrancois} that we only need to show that the solutions
converge to a solution $(\Sigma,U)$ of the weak formulation of Problem \ref{problem:homo},1
  which is that for any test function $V$,
  \begin{equation}
    \int_\Omega \nabla U \cdot \nabla V \de \bx = \Sigma\left(A_d \beta
    \int_\Omega U V \de \bx + \int_{\del \Omega} U V \de A\right).
  \end{equation}
  The weak formulation of the Steklov problem on $\Omega^\eps$ is that for all test functions $V$,
  \begin{equation}
    \int_{\Omega^\eps} \nabla u_k^\eps \cdot \nabla V \de \bx = \sigma_k^\eps
    \left(\int_{\del \Omega} u_k^\eps V \de A + \int_{\del T^\eps} u_k^\eps V
    \de A\right).
  \end{equation}
  The convergence of the gradient terms follows from weak convergence in $\RH^1$
  of $U_k^\eps$ to $U$ and Lemma \ref{lemma:localEstimateFourier}. We already
  have that $\sigma_k^\eps \to \Sigma$. The integrals on $\del \Omega$
  converge by weak convergence in $\RH^1$ and compactness of the trace operator
  on $\del \Omega$. Finally, convergence of the interior term comes from
  Proposition
  \ref{prop:limitproblem}.
\end{proof}

\subsection{Spectral convergence of the problem}

We need the following technical lemma.
\begin{lemma} \label{lem:normconv}
 As $\eps \to 0$, we have that
  \begin{equation}
    \tilde L_\eps(u_k^\eps) \to A_d \beta \int_{\Omega} U^2 \de \bx.
  \end{equation}
\end{lemma}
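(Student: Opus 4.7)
The plan is to revisit the outer representation of $L_\eps$ established in Step 2 of the proof of Proposition~\ref{prop:limitproblem} and apply it with the (non-smooth but uniformly $\RH^1$-controlled) test function $V=(U_k^\eps)^2$. First I would observe that since $U_k^\eps=u_k^\eps$ on $\partial T^\eps$,
\begin{equation}
\tilde L_\eps(u_k^\eps)=\int_{\partial T^\eps}(u_k^\eps)^2\,\de A=\int_{\partial T^\eps}(U_k^\eps)^2\,\de A=L_\eps\bigl((U_k^\eps)^2\bigr).
\end{equation}
To legitimately use $(U_k^\eps)^2$ as a test function in $\RH^1(\Omega)$, I would invoke Lemma~\ref{lem:tracebv} together with the pointwise bound of \cite{bgt}[Theorem~3.1] to obtain $\|u_k^\eps\|_{\RL^\infty(\Omega^\eps)}\le C$ uniformly in $\eps$ (using the normalization $\|u_k^\eps\|_{\RL^2(\partial\Omega^\eps)}=1$ and the uniform boundedness of $\sigma_k^\eps$ already established in Lemma~\ref{lemma:boundedUn}). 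The maximum principle then transfers this to $\|U_k^\eps\|_{\RL^\infty(\Omega)}\le C$, and combined with the uniform $\RH^1$ bound on $U_k^\eps$ this gives a uniform $\RH^1$ bound on $(U_k^\eps)^2$.

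Applying the outer representation of $L_\eps$ to $V=(U_k^\eps)^2$ and using $\nabla(U_k^\eps)^2=2U_k^\eps\nabla U_k^\eps$ yields three terms:
\begin{equation}
L_\eps\bigl((U_k^\eps)^2\bigr)=2\eps\int_{\tilde\Omega^\eps}U_k^\eps\,\nabla\Psi_\eps\cdot\nabla U_k^\eps\,\de\bx+\eps\int_{\partial\tilde\Omega^\eps\setminus\partial T^\eps}(U_k^\eps)^2\,\partial_\nu\Psi_\eps\,\de A+\eps^{-1}c_\eps\int_{\tilde\Omega^\eps}(U_k^\eps)^2\,\de\bx.
\end{equation}
The last term is the dominant one: since $\eps^{-1}c_\eps\to A_d\beta$ and $U_k^\eps\to U$ strongly in $\RL^2(\Omega)$ with $|\Omega\setminus\tilde\Omega^\eps|\to 0$, it converges to $A_d\beta\int_\Omega U^2\,\de\bx$, as desired.

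It remains to show the first two terms vanish, and this proceeds exactly as in Step~3 of the proof of Proposition~\ref{prop:limitproblem} but using the uniform $\RL^\infty$ bound on $U_k^\eps$ in lieu of smoothness of $V$. For the first term, Cauchy--Schwarz and Lemma~\ref{lemma:sobolevpsieps} give $\|\eps\nabla\Psi_\eps\|_{\RL^2(\tilde\Omega^\eps)}\ll\eps^{1/2+1/d}$, while $\|U_k^\eps\nabla U_k^\eps\|_{\RL^2(\Omega)}\le\|U_k^\eps\|_{\RL^\infty}\|\nabla U_k^\eps\|_{\RL^2}=\bigo 1$. For the boundary term, the outer boundary $\partial\tilde\Omega^\eps\setminus\partial T^\eps$ has area of order $|\partial\Omega|$ (it is a piecewise-linear approximation of $\partial\Omega$), so by Hölder combined with the scaling relation $\eps\partial_\nu\Psi_\eps(\eps\bx)=\partial_\nu\psi_\eps(\bx\bmod 1)$ and the $\RL^\infty$ decay in Lemma~\ref{lemma:sobolevpsieps} (applicable since the outer cube faces lie uniformly far from the hole centers in the rescaled cell), this term is also $\bigo{\eps^{1/2+1/d}}$.

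The main obstacle is reconciling that Proposition~\ref{prop:limitproblem} was stated for a \emph{fixed} smooth test function, whereas we must now test against an $\eps$-dependent function. The uniform $\RL^\infty$ bound from Lemma~\ref{lem:tracebv} is exactly what makes this work, as it supplies the estimates $\|(U_k^\eps)^2\|_{\RL^\infty}$ and $\|\nabla(U_k^\eps)^2\|_{\RL^2(\Omega)}$ needed to mimic the original error estimates uniformly in $\eps$.
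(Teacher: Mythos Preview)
Your proposal is correct and follows essentially the same approach as the paper: apply the outer representation of $L_\eps$ to $(U_k^\eps)^2$, identify the main term via strong $\RL^2$ convergence, and kill the remaining two terms using the uniform $\RL^\infty$ bound on $u_k^\eps$ from Lemma~\ref{lem:tracebv} together with the decay estimates of Lemma~\ref{lemma:sobolevpsieps}. Your handling of the gradient term is in fact slightly cleaner than the paper's: you use the global Cauchy--Schwarz bound $\|\eps\nabla\Psi_\eps\|_{\RL^2(\tilde\Omega^\eps)}\,\|U_k^\eps\|_{\RL^\infty}\,\|\nabla U_k^\eps\|_{\RL^2}$ directly, whereas the paper splits $\tilde\Omega^\eps$ into an annular neighborhood $\omega^\eps$ of the holes (where it uses the $\RL^2$ bound on $\eps\nabla\Psi_\eps$ and the $\RL^\infty$ bound on $u_k^\eps$) and its complement (where it uses the $\RL^\infty$ bound on $\eps\nabla\Psi_\eps$ and the $\RL^2$ bound on $u_k^\eps$). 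Since the $\RL^\infty$ bound on $u_k^\eps$ holds throughout $\Omega^\eps$, this split is unnecessary and your single-step estimate suffices.
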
  
\begin{remark}
  Observe that this situation is specific to the sequence $u_k^\eps$.
  Indeed, there are sequences $\set{v_\eps}$ converging
  weakly to some $v \in \RH^1(\Omega)$ such that
  \begin{equation}
  \lim_{\eps \to 0} \tilde L_\eps (v_\eps) \ne A_d \beta \int_\Omega U v \de
  \bx.
  \end{equation}
\end{remark}
\begin{proof}

  From the outer representation \eqref{eq:outerreptilde}, we have that
\begin{equation}
  \begin{aligned}
    \tilde L_\eps(u_k^\eps) 
    &= \eps \int_{\tilde\Omega^\eps}\nabla\Psi_\eps\cdot\nabla (u_k^{\eps})^2
    \de \bx +
\eps \int_{\del \tilde \Omega^\eps \setminus \del T^\eps} (u_k^\eps)^2 \del_\nu
\Psi_\eps \de \bx +
    \eps^{-1}c_\eps\int_{\tilde\Omega^\eps}(u_k^{\eps})^2 \de
  \bx, \\
    &= \eps \int_{\tilde\Omega^\eps}2\nabla\Psi_\eps\cdot u_k^\eps\nabla
    u_k^\eps \de \bx
+\eps \int_{\del \tilde \Omega^\eps \setminus \del T^\eps} (u_k^\eps)^2 \del_\nu \Psi_\eps \de \bx
    +\eps^{-1}c_\eps\int_{\tilde\Omega^\eps}(u_k^{\eps})^2 \de
  \bx. \\
  \end{aligned}
\end{equation}
The last term converges towards the desired $A_d \beta \int_\Omega U^2 \de \bx$
once again by strong $L^2$ convergence of the sequence $U_k^\eps$. 
To study the first term, let us now introduce the sets
\begin{equation}
 \omega^\eps := \bigcup_{\bk \in I^\eps} B\left(\eps \bk,\frac \eps 4\right) \setminus B(\eps\bk,r_\eps) \subset \tilde \Omega^\eps,
\end{equation}
and decompose
\begin{equation}
  \eps \int_{\tilde \Omega^\eps} \nabla \Psi_\eps \cdot \nabla(u_k^\eps)^2
  \de \bx =
  2 \left(\int_{\omega^\eps} + \int_{\tilde \Omega^\eps \setminus \omega^\eps} \right) \eps \nabla \Psi_\eps \cdot u_k^\eps \nabla u_k^\eps \de
  \bx.
\end{equation}
 Let us first consider the integral over $\omega^\eps$. It follows from \cite{bgt} that the
$\RL^\infty$ norms of the Steklov eigenfunctions $u_k^\eps$ is bounded, uniformly in terms of
$\sigma_k^\eps$, and the norm of the trace $T : \RB\RV(\Omega^\eps) \to
\RL^1(\del \Omega^\eps)$, which we have shown in Lemma \ref{lem:tracebv} to be
bounded. This, along with the
scaled $\RH^1$ norm estimate for $\eps \nabla \Psi_\eps$ from Lemma
\ref{lemma:sobolevpsieps}, the $\RL^2$ boundedness of $\nabla u_k^\eps$ from
Lemma \ref{lemma:boundedUn} and the generalised Hölder inequality yields

\begin{equation}
   \label{eq:gpsigu2}
   \begin{aligned}
    \int_{\omega^\eps}2 \eps \nabla \Psi_\eps \cdot u_k^\eps \nabla
    u_k^\eps \de \bx &\le 2 \norm{\eps \nabla \Psi_\eps}_{\RL^2(\Omega^\eps)^d}
    \norm{\nabla u_k^\eps}_{\RL^2(\Omega^\eps)^d}
    \norm{u_k^\eps}_{\RL^\infty(\omega^\eps)} \\
    &\ll \eps^{\frac 1 2 + \frac 1 d}.
    \end{aligned}
\end{equation}

For the integral over $\tilde \Omega^\eps \setminus \omega^\eps$, observe that scaling Lemma \ref{lemma:sobolevpsieps} yields
\begin{equation}
\begin{aligned}
 \norm{\eps\nabla \Psi_\eps}_{\RL^\infty(\tilde \Omega^\eps \setminus \omega^\eps)^d} &= \sup_{\abs \alpha = 1} \norm{D^\alpha \psi_\eps}_{\RL^\infty(\CC^\eps \setminus B(0,1/4))} \\
 &\ll \eps^{\frac 1 2 + \frac 1 d}.
 \end{aligned}
\end{equation}

Inserting that bound into the generalised Hölder inequality, along with $\RH^1$ boundedness of $u_k^\eps$ yields
\begin{equation}
   \begin{aligned}
    \int_{\omega^\eps}2 \eps \nabla \Psi_\eps \cdot u_k^\eps \nabla
    u_k^\eps \de \bx &\le 2 \norm{\eps \nabla \Psi_\eps}_{\RL^\infty(\tilde \Omega^\eps \setminus \omega^\eps)^d}
    \norm{\nabla u_k^\eps}_{\RL^2(\Omega^\eps)^d}
    \norm{u_k^\eps}_{\RL^2(\Omega^\eps)} \\
    &\ll \eps^{\frac 1 2 + \frac 1 d}.
    \end{aligned}
\end{equation}

Finally, for the integral on the boundary $\del \tilde \Omega^\eps \setminus \del T^\eps$, we have
from Hölder that
\begin{equation}
  \label{eq:delnupsiuk2}
\eps \int_{\del\tilde \Omega^\eps \setminus \del T^\eps} (u_k^\eps)^2 \del_\nu
\Psi_\eps \de A \le \eps\abs{\del \tilde \Omega^\eps \setminus \del T^\eps}\norm{\del_\nu \Psi_\eps}_{\RL^\infty(\del \tilde \Omega^\eps \setminus \del
T^\eps)}\norm{(u_k^\eps)^2}_{\RL^\infty(\del \tilde \Omega^\eps \setminus \del T^\eps)}
\end{equation}
We have as in the proof of Lemma \ref{prop:limitproblem} that the $\RL^2$ norm of $u_k^\eps$ is uniformly bounded, from uniform boundedness of the trace operator. From Lemma \ref{lemma:sobolevpsieps} we have that $$\eps \norm{\del_\nu
\Psi_\eps}_{\RL^\infty(\del\tilde \Omega^\eps \setminus \del T^\eps)} =
\bigo{\eps^{\frac 1 2 + \frac 1 d}},$$ and as earlier, we have
$$ \norm{(u_k^\eps)^2}_{\RL^\infty(\del \tilde \Omega^\eps \setminus \del
T^\eps)} \le C,$$
from Lemma \ref{lem:tracebv}. Finally, it follows from standard lattice packing
theory that $\# \tilde I^\eps \le C \eps^{1-d}$
for some $C$ depending only on $\Omega$, so that $\abs{\del \tilde \Omega^\eps
\setminus \del T^\eps} \le C$. Combining these three estimates, we have indeed
that the product in
\eqref{eq:delnupsiuk2} is going to $0$ as $\eps \to 0$, concluding the proof.
\end{proof}

Until now, we have shown that the harmonic extensions to the holes in
$\Omega^\eps$
of Steklov eigenpairs $(\sigma_k^{\eps},u_k^{\eps})$ converge weakly in $\RH^1$
and strongly in $\RL^2$ to a solution $(\Sigma,U)$ to the
problem
\begin{equation}
  \begin{cases}
   - \Delta U = A_d \beta \Sigma U & \text{in } \Omega, \\
    \del_\nu U = \Sigma U & \text{on } \del \Omega.
  \end{cases}
\end{equation}
It remains to be shown that the convergence is to the ``right'' eigenpair
$(\Sigma_{k,\beta},U_k)$. Spectral convergence of this type is a staple of
  homogenisation theory, see for example \cite{ACG,NPP} in a standard setting or
\cite{FL} using the theory of $E$-convergence. In both cases,
the general theory cannot be directly applied here since the Hilbert space
$\RL^2_{A_d\beta}(\Omega) \times \RL^2(\del \Omega)$ on which the
limit problem is self-adjoint has no natural embedding to the Hilbert spaces
$\RL^2(\del \Omega^\eps)$, compare e.g. with \cite[Definitions 1--3]{FL}. Our methods use instead the quadratic forms
associated with the eigenproblems directly, similar methods were used in
spectral prescription, see e.g. \cite{CdV87}.

For the remainder of this section, as $\beta$ is
fixed, we will write simply $\Sigma_k$.
We first start by the following lemma, showing that the limit function $U$ does
not degenerate to the $0$ function.
\begin{lemma}
  Let $U$ be such that $U_k^{\eps} \to U$ weakly in $\RH^1(\Omega)$. Then, 
  \begin{equation}
    (U,U)_\beta = 1.
  \end{equation}
\end{lemma}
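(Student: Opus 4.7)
The plan is to exploit the normalization $\int_{\del \Omega^\eps} (u_k^\eps)^2 \de A = 1$ and split this integral into its contributions over the outer boundary $\del \Omega$ and the boundary of the holes $\del T^\eps$, then pass to the limit in each piece separately.

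First I would write
\begin{equation}
1 = \int_{\del\Omega^\eps}(u_k^\eps)^2\de A = \int_{\del\Omega}(u_k^\eps)^2\de A + \int_{\del T^\eps}(u_k^\eps)^2\de A.
\end{equation}
For the boundary term on $\del \Omega$, the weak convergence $U_k^\eps \rightharpoonup U$ in $\RH^1(\Omega)$ combined with the compactness of the trace operator $\RH^1(\Omega) \to \RL^2(\del\Omega)$ yields strong $\RL^2(\del\Omega)$ convergence of the traces, so
\begin{equation}
\int_{\del\Omega}(u_k^\eps)^2\de A \longrightarrow \int_{\del\Omega}U^2\de A.
\end{equation}

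Next, observe that the integral over $\del T^\eps$ is precisely $\tilde L_\eps(u_k^\eps)$ with the notation fixed in Step~1 of the previous subsection, since $\tilde L_\eps(V) = L_\eps(u_k^\eps V) = \int_{\del T^\eps} u_k^\eps V \de A$. Therefore Lemma~\ref{lem:normconv} applies directly and gives
\begin{equation}
\int_{\del T^\eps}(u_k^\eps)^2\de A = \tilde L_\eps(u_k^\eps) \longrightarrow A_d\beta \int_\Omega U^2 \de \bx.
\end{equation}

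Adding the two limits and recalling the definition of $(\cdot,\cdot)_\beta$ yields
\begin{equation}
1 = \lim_{\eps\to 0}\int_{\del\Omega^\eps}(u_k^\eps)^2\de A = \int_{\del\Omega}U^2\de A + A_d\beta\int_\Omega U^2\de\bx = (U,U)_\beta,
\end{equation}
which is the claim. The only substantive input is Lemma~\ref{lem:normconv}; everything else is an application of the normalization and trace compactness, so no new obstacle arises at this step.
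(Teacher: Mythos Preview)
Your proof is correct and follows essentially the same approach as the paper: split the normalization integral over $\del\Omega$ and $\del T^\eps$, use compactness of the trace on $\del\Omega$ for the first piece, and invoke Lemma~\ref{lem:normconv} for the second.
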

\begin{proof}
 By compactness of the
  trace operator on $\del \Omega$, we have that
  \begin{equation}
    \int_{\del \Omega} (U_k^{\eps})^2 \de \bx \to \int_{\del \Omega} U^2 \de
    \bx.
  \end{equation}
  as $\eps \to 0$. Moreover, by Lemma \ref{lem:normconv}, we have that
  \begin{equation}
    \int_{\del T^\eps} \left( U_k^\eps \right)^2 \de \bx \to A_d\beta\int_\Omega U^2\de\bx 
  \end{equation}
  as $\eps \to 0$. Hence $(U_k^\eps,U_k^\eps)_{\del^\eps} \to (U,U)_\beta$.
  Since $U_k^\eps$ has been normalised to $\RL^2(\del \Omega^\eps)$ norm $1$, this concludes the
  proof.
\end{proof}
We are now ready to complete the proof of our first main result.
\begin{proof}[Proof of Theorem \ref{thm:homogenisation}]
We first show that all the eigenvalues converge. We proceed by induction on the eigenvalue rank $k$. The case $k = 0$ is
trivial. Indeed, we then have that the eigenvalues $\sigma_0^{\eps} \equiv 0$
obviously converge to $\Sigma_0 = 0$ and the normalised eigenfunctions $U_0^{\eps}(\bx) = \abs{\del
\Omega^\eps}^{-1/2}$, which converges to the constant fonction 
$$U_0(\bx) =
\left(\abs{\del \Omega} + A_d\beta\abs{\Omega}\right)^{-1/2}.$$
Suppose now that for all $0 \le j \le k-1$, we have that $U_j^{\eps}$ converges to $U_j$ weakly in $\RH^1(\Omega)$.
We first show that 
\begin{equation}
  \label{eq:asympboundsigstar}
  \Sigma_k \ge \sigma_k^{\eps} + \smallo 1.
\end{equation}
In order to do so, we will show that the eigenfunctions $U_k$ are good
approximations to appropriate test functions for the variational
characterisation \eqref{eq:varcharsigm} of $\sigma_k^\eps$.

Observe that by compactness of the trace operator on $\del \Omega$ and by Lemma
\ref{prop:limitproblem}
\begin{equation}
  \lim_{\eps \to 0} \int_{\del \Omega^\eps} u_j^{\eps} U_k \de A = \int_{\del
  \Omega} U_k U_j\de A + A_d\beta \int_\Omega U_k U_j \de \bx= 0
\end{equation}
for all $0 \le j \le k-1$. Hence, we can write
\begin{equation}
  \label{eq:ukalmostortho}
  U_k = V^{\eps} + \sum_{j=0}^{k-1} \eta_j^{\eps} u_j^{\eps} \de A
\end{equation}
where for all $0 \le j \le k-1$ and all $\eps > 0$, 
\begin{equation}
 \int_{\del \Omega^\eps} V^\eps u_j^\eps = 0
\end{equation}
and $\eta_j^{\eps} \to 0$ as $\eps \to 0$.
 Now, we have that
\begin{equation}
  \begin{aligned}
    \Sigma_k &= \int_\Omega \abs{\nabla U_k}^2 \de \bx \\
    &\ge \int_{\Omega^\eps} \abs{\nabla V^{\eps}}^2  + \sum_{j=0}^{k-1}(\eta_j^{\eps}) \nabla
    u_j^{\eps} \cdot \nabla V^\eps + \sum_{j,l = 0}^{k-1} \eta_j^\eps
    \eta_k^\eps \nabla u_j^\eps \cdot \nabla u_k^\eps \de \bx. \\
\end{aligned}
\end{equation}
Since the $u_j^\eps$ and $V^\eps$ are bounded in $\RH^1(\Omega)$, the integral
of the two sums in the previous equation go to $0$ as $\eps \to 0$. On the other
hand, we have that for all $\eps > 0$, $V^\eps$ is an appropriate test function
for $\sigma_k^\eps$. Hence,
\begin{equation}
  \int_{\Omega^\eps} \abs{\nabla V^\eps}^2 \de \bx \ge \sigma_k^\eps \int_{\del
  \Omega^\eps} (V^\eps)^2 \de A.
\end{equation}
It follows from the decomposition \eqref{eq:ukalmostortho} and
the fact that by Lemma \ref{prop:limitproblem},
\begin{equation}
  \int_{\del \Omega^\eps} U_k^2 \de A \to \int_{\del \Omega} U_k^2 \de A + A_d\beta
  \int_\Omega U_k^2 \de \bx = 1
\end{equation}
that
\begin{equation}
  \lim_{\eps\to 0} \int_{\del \Omega^\eps} (V^\eps)^2 \de \bx  = 1,
\end{equation}
implying that indeed $\Sigma_k \ge \sigma_k^{\eps} + \smallo 1$. We now show that 
$$
\Sigma_k \le \sigma_k^{\eps}.
$$
Let $(\Sigma,U)$ be the limit eigenpair for
$(\sigma_k^{\eps},u_k^{\eps})$ and suppose that 
$$(\Sigma,U) =
(\Sigma_j,U_j)$$
for some $0 \le j \le k-1$. We have
that  
\begin{equation}
  \begin{aligned}
  0 &=   \lim_{\eps \to 0} \int_{\del \Omega^\eps} u_k^{\eps} \de A \\&= 
  \lim_{\eps \to 0}\int_{\del \Omega^\eps}
  u_k^\eps U_j \de A + \int_{\del \Omega^\eps} u_k^\eps( u_j^\eps - U_j) \de A
\end{aligned}
\end{equation}
The first term converges to $1$ by the assumption that $(U,U_j)_\beta = 1$ and Lemma
\ref{prop:limitproblem}. As for the second term, we have by the Cauchy--Schwarz
inequality and the normalisation $\norm{u_k^\eps}_{\RL^2(\del \Omega^\eps)}=1$ that
\begin{equation}
  \begin{aligned}
  \int_{\del \Omega^\eps} u_k^\eps( u_j^\eps - U_j) \de A &\le \norm{u_j^\eps -
  U_j}_{\RL^2(\del \Omega^\eps)} \\
  &\to 0
\end{aligned}
\end{equation}
as $\eps \to 0$ by Lemma \ref{lem:normconv}. This results in a contradiction. We therefore deduce that $\sigma_k^{\eps}$
converges to some eigenvalue $\Sigma$ of problem \ref{problem:homo} that
is larger than $\Sigma_j$ for all $j \le k-1$. Combining this with the
bound \eqref{eq:asympboundsigstar} implies that $\sigma_k^{\eps}$ converges to
$\Sigma_k$, and convergence of the eigenfunction therefore follows, when the
eigenvalues are simple. 

Otherwise, if $\Sigma_k$ has multiplicity $m$, \emph{i.e.} 
\begin{equation}
  \Sigma_{k-1} < \Sigma_k = \dotso = \Sigma_{k+m-1} < \Sigma_{k+m},
\end{equation}
observe that the above argument still yields convergence of $\sigma_j^\eps$ to
$\Sigma_j$ for all $k \le j < k+m$. For the eigenfunctions, start by fixing a
basis $U_k,\dotsc,U_{k+m - 1}$ of the eigenspace associated with $\Sigma_k$. Observe that along
any subsequence there is a further subsequence such that all the eigenfunctions
$U_j^\eps$, converge simultaneously to solutions of Problem \ref{problem:homo}.
Since for all $\eps$, the functions $U_j^\eps$ were $\RL^2(\del
\Omega^\eps)$-orthogonal, in the limit they are still orthogonal, this time with respect
to $(\cdot,\cdot)_\beta$. This implies that in the limit they span the
eigenspace associated with $\Sigma_k$. As such, the projection on the span of
$\set{U_j^\eps : k \le j < k+m}$ converges to the projections on the span of
$\set{U_j : k \le j < k+m}$. Since this was
true along any subsequence, it is also true for the whole sequence, proving
convergence of the projections in the sense alluded to in Remark \ref{rem:multiple}.
\end{proof}
\section{\bf Dynamical boundary conditions with large parameter}\label{Section:LargeBeta}
The goal of this final section is to understand the limit as $\beta$ becomes large of the eigenvalues $\Sigma_{k,\beta}$ and of the corresponding eigenfunctions $U_{k,\beta}$, normalized by
\begin{equation*}
 1 = (U_{k,\beta},U_{k,\beta})_\beta = \int_{\del \Omega} U_{k,\beta}^2 \de A + A_d \beta \int_{\Omega} U_{k,\beta}^2 \de \bx.
\end{equation*}
Recall that the Neumann eigenvalues of $\Omega$ are
$$0=\mu_0\leq\mu_1\leq\mu_2\leq\cdots\nearrow\infty.$$
We are now ready to prove our second main result.

\begin{proof}[Proof of Theorem \ref{thm:limitbeta}]
For $k$ fixed, we start by showing that $\beta \Sigma_{k,\beta}$ is bounded. Consider the min--max characterisation of $\Sigma_{k,\beta}$ to obtain
\begin{equation}
\label{eq:minmaxbSig}
 \tilde \Sigma_{k,\beta}:=\beta \Sigma_{k,\beta} = \min_{\substack{E \subset \RH^1(\Omega) \\ \operatorname{dim}(E) = k+1}} \max_{f \in E \setminus \set 0} \frac{\int_{\Omega} \abs{\nabla f}^2 \de \bx}{\frac 1 \beta \int_{\del \Omega} f^2 \de A + A_d \int_\Omega f^2 \de \bx}.
\end{equation}
The quotient on the righthand side of \eqref{eq:minmaxbSig} is clearly bounded uniformly in $\beta$ for any $k+1$ dimensional subspace of smooth functions on $\Omega$. 
We can therefore suppose that a subsequence in $\beta$ of $\tilde \Sigma_{k,\beta}$ converges, say to $\tilde \Sigma_{k,\infty}$. Let us now prove that $\tilde U_{k,\beta} := \beta^{1/2} U_{k,\beta}$ is a bounded family in $\RH^1(\Omega)$. The normalisation on $U_{k,\beta}$ implies that
\begin{equation*}
 1 \ge A_d \beta\int_\Omega U_{k,\beta}^2 \de \bx 
  = A_d \int_\Omega \tilde U_{k,\beta}^2 \de \bx.
\end{equation*}
For the Dirichlet energy, we have that
\begin{equation}
 \int_{\Omega} \abs{\nabla\tilde U_{k,\beta}}^2 \de \bx = \beta \Sigma_{k,\beta},
\end{equation}
which was already shown to be bounded. Therefore there is also a weakly convergent subsequence in in $\RH^1(\Omega)$ as $\beta \to \infty$, converging to say $\tilde U_{k,\infty}$. We take the subsequence to coincide with the one for $\tilde \Sigma_{k,\beta}$. Observe as well that the normalisation condition on $U_{k,\beta}$ prevents the limit $\tilde U_{k,\infty}$ from vanishing identically.

The functions $\tilde U_{k,\beta}$ satisfy the following weak variational characterisation for any element $V \in \RH^1(\Omega)$
\begin{equation}
\int_{\Omega} \nabla \tilde U_{k,\beta} \cdot \nabla V\,\de\bx = \tilde\Sigma_{k,\beta}\left(\beta^{-1}\int_{\del \Omega} \tilde U_{k,\beta} V \de A + A_d \int_\Omega \tilde U_{k,\beta} V \de \bx  \right).
\end{equation}
Letting $\beta \to \infty$, weak convergence of $\tilde U_{k,\beta}$ in $\RH^1(\Omega)$ implies that the limit satisfies the weak identity
\begin{equation}
  \int_\Omega \nabla \tilde U_{k,\infty} \cdot\nabla V\,\de\bx = \tilde \Sigma_{k,\infty} A_d \int_\Omega \tilde U_{k,\infty} V \de \bx. 
\end{equation}
In other words, $\tilde U_{k,\infty}$ is a solution to the Neumann eigenvalue problem with eigenvalue $\mu = \tilde \Sigma_{k,\infty} A_d$.

We now proceed by recursion on $k$ to show convergence to the right eigenpair. Once again, the statement is trivial for $k = 0$ and the constant eigenfunction. Assume that we have convergence for the first $k-1$ eigenpairs.
We now proceed in a similar fashion as in the proof of the spectral convergence to Problem \eqref{problem:homo}. We repeat the argument because the inequalities are more subtle.
 We first show that
 \begin{equation}
 \label{eq:neumannupper}
  \tilde \mu_k(\Omega):= \frac{\mu_k(\Omega)}{A_d} \ge \tilde \Sigma_{k,\beta}(1 + \smallo 1).
 \end{equation}

 Write
 \begin{equation}
  f_k = F_\beta + \sum_{j=0}^{k-1} (f_k, U_{j,\beta})_\beta U_{j,\beta},
 \end{equation}

with $F_\beta \perp_\beta U_{j,\beta}$ for $0 \le j < k$. We have that
\begin{equation}
\label{eq:scalardecomp}
 (f_k,U_{j,\beta})_\beta = (f_k,\beta^{-1/2}f_j)_\beta + (f_k,U_{j,\beta}-\beta^{-1/2}f_j)_\beta .
\end{equation}
The first inner product develops as
\begin{equation}
 (f_k,\beta^{-1/2}f_j)_\beta = \beta^{-1/2} \int_{\del\Omega} f_k f_j \de A +A_d\beta^{1/2}\int_{\Omega} f_k f_j \de \bx. 
\end{equation}
The first term clearly goes to $0$ as $\beta \to \infty$, and the second one vanishes by orthogonality of the Neumann eigenfunctions in $\RL^2(\Omega)$. We now turn our attention to the second inner product in \eqref{eq:scalardecomp}. We have that
\begin{equation}
 \lim_{\beta \to \infty} \beta^{-1/2} \int_{\del \Omega}f_k (\tilde U_{j,\beta} - f_j) \de A = 0
\end{equation}
by weak $\RH^1$ convergence of $\tilde U_{j,\beta}$ and compactness of the trace operator. On the other hand, strong $\RL^2$ convergence implies that
\begin{equation}
\label{eq:tightcontrol}
 A_d \beta^{1/2} \int_{\Omega} f_k (\tilde U_{j,\beta} - f_j) \de \bx = \smallo{\beta^{1/2}}.
\end{equation}
All in all, this implies that
\begin{equation}
 (f_k,U_{j,\beta})_\beta = \smallo{\beta^{1/2}}
\end{equation}
for all $0 \le j < k$. 
We now write
\begin{equation}
\label{eq:firstapproxneumann}
 \begin{aligned}
  \tilde \mu_k &= \frac{1}{A_d} \int_\Omega \abs{\nabla f_k}^2 \de \bx \\
  &\ge \frac{1}{A_d} \int_{\Omega} \abs{\nabla F_\beta}^2 \de \bx - \frac{1}{A_d}\sum_{j=0}^{k-1} (f_k,U_{j,\beta})_\beta^2 \Sigma_{j,\beta}.
 \end{aligned}
\end{equation}
Since $\beta \Sigma_{j,\beta}$ is bounded and equation \eqref{eq:tightcontrol} implies that $(f_k,U_{j,\beta})_\beta^2 = \smallo{\beta}$, we deduce that the last term in \eqref{eq:firstapproxneumann} goes to $0$. We now observe that by the variational characterisation of $\Sigma_{k,\beta}$,
\begin{equation}
\begin{aligned}
  \frac{1}{A_d}\int_{\Omega} \abs{\nabla F_\beta}^2 \de \bx &\ge \frac{\Sigma_{k,\beta}}{A_d}\left(\int_{\del \Omega} F_\beta^2 + A_d \beta \int_\Omega F_\beta^2\right), \\
 &= \tilde \Sigma_{k,\beta} \left(\frac{1}{A_d\beta}\int_{\del\Omega} F_\beta^2 \de A  + \int_\Omega F_\beta^2 \de \bx\right).
 \end{aligned}
\end{equation}
In the same way as we obtained the bound on the last term in \eqref{eq:firstapproxneumann}, the first integral is $\smallo \beta$. As for the second one, we write
\begin{equation*}
 \begin{aligned}
  \int_\Omega F_\beta^2  \de \bx &= \int_{\Omega} \left(f_k - \sum_{j=0}^{k-1} (f_k, U_{j,\beta})_\beta U_{j,\beta}^2\right)^2 \de \bx \\
  &= 1 - \sum_{j=0}^{k-1} \int_\Omega (f_k,U_{j,\beta})_\beta f_j U_{j,\beta}\de \bx + \sum_{j,\ell = 0}^{k-1} \int_\Omega (f_k,U_{j,\beta})_\beta(f_k,U_{\ell,\beta})_\beta U_{j,\beta}U_{\ell,\beta} \de \bx.
 \end{aligned}
\end{equation*}
Strong $\RL^2(\Omega)$ convergence of $\beta^{1/2} U_{j,\beta}$ and the fact that $(f_k,U_{j,\beta}) = \smallo{\beta^{1/2}}$ imply that the last two integrals converge to $0$ as $\beta \to \infty$. 

We have therefore obtained that
\begin{equation}
 \tilde \mu_k(\Omega) \ge \tilde \Sigma_{k,\beta}\left(1 + \smallo 1\right),
\end{equation}
\emph{i.e.} we have indeed proven assertion \eqref{eq:neumannupper}.

Suppose now that $(\tilde \Sigma_{k,\beta}, \tilde U_{k,\beta})$ converge to a Neumann eigenpair $(\tilde \mu_j, f_j)$ for some $j < k$. Then, we have that 
\begin{equation}
 \begin{aligned}
  1 &= \lim_{\beta \to \infty} \int_\Omega f_j \tilde U_{k,\beta} \de \bx \\
  &= \lim_{\beta \to \infty} \int_{\Omega} (f_j - \tilde U_{j,\beta})\tilde U_{k,\beta} \de x + \int_\Omega \tilde U_{k,\beta} U_{j,\beta} \de \bx \\
  &\le \lim_{\beta \to \infty}\norm{f_j - \tilde U_{j,\beta}}_{\RL^2(\Omega}\norm{\tilde U_{k,\beta}}_{\RL^2(\Omega)} + \abs{\int_\Omega U_{k,\beta} U_{j,\beta} \de \bx} \\
  &= 0,
 \end{aligned}
\end{equation}
where the limit comes from strong convergence in $\RL^2(\del \Omega)$ to $0$ of $U_{j,\beta}$ and our recursion hypothesis. This is a contradiction, hence the convergence is to the correct eigenpair. This implies convergence of the whole sequence if the Neumann spectrum of $\Omega$ is simple. If there are eigenvalues with multiplicity, the same procedure as for the homogenisation problem yields once again convergence.

As for continuity in $\beta$, the same proof goes through in exactly the same way, except for the fact that we do not need to show the boundedness results in $\beta$.
\end{proof}

We can now prove the comparison results between Steklov and Neumann eigenvalues.

\begin{proof}[Proof of Corollary \ref{cor:boundd}]

It is proved in~\cite[Theorem 1.4]{ceg2} that any bounded domain $\Omega\subset\R^d$ satisfies
\begin{gather}
  \sigma_k(\Omega)|\partial\Omega|\leq C(d)|\Omega|^{\frac{d-2}{d}}k^{2/d},
\end{gather}
where $C(d)$ is a constant which depends only on the dimension.
When applied to $\Omega^\eps$ this leads, after taking $\eps\to 0$, to
\begin{gather}
  \Sigma_{k,\beta}(|\partial\Omega|+A_d\beta|\Omega|)\leq C(d)|\Omega|^{\frac{d-2}{d}}k^{2/d}.
\end{gather}
Taking the limit $\beta\to\infty$ leads to
\begin{gather}
  \mu_k|\Omega|\leq C(d)|\Omega|^{\frac{d-2}{d}}k^{2/d}.
\end{gather}
This is equivalent to
\begin{gather}
  \mu_k|\Omega|^{2/d}\leq C(d)k^{2/d}.
\end{gather}
\end{proof}

Finally, all is left to do is to prove 
the previous theorem has the following corollary in dimension $d = 2$. It allows one to transform universal bounds for Steklov eigenvalues into universal bounds for Neumann eigenvalues.

We write $\Omega^\eps_\beta$ for a domain $\Omega^\eps$ as constructed earlier whose holes are exactly of radius $r_\eps^{d-1} = \beta \eps^{d}$.

\begin{proof}[Proof of Theorem \ref{thm:neumannd=2}]
 We have from Theorem \ref{thm:homogenisation} that
 \begin{equation}
 \begin{aligned}
  \lim_{\eps \to 0} \sigma_k\left(\Omega_\beta^\eps\right)\abs{\del \Omega_\beta^\eps} &= \Sigma_{k,\beta}(\Omega)\left(\abs{\del \Omega} + A_d \beta \abs \Omega\right) \\
  &= \frac{\abs{\del \Omega}} \beta \tilde \Sigma_{k,\beta} + A_d \abs \Omega \tilde \Sigma_{k,\beta}. 
  \end{aligned}
 \end{equation}
Now, the first term clearly goes to $0$ as $\beta \to \infty$ while, by Theorem \ref{thm:limitbeta}, we have that
\begin{equation}
 \lim_{\beta \to \infty} A_d \abs \Omega \tilde \Sigma_{k,\beta} = \mu_k(\Omega) \abs \Omega.
\end{equation}
\end{proof}

\subsection*{Acknowledgements}
The authors would like to thank Iosif Polterovich for several very useful
conversations. We would also like to thank Valeri Smyshlyaev and Ilya Kamotski
for pointing us towards many classical results in homogenisation theory. We
would also like to thank Almut Burchard for possible further related questions.
The authors also would like to thank Gérard Philippin, who was helpful in the
early stage of this project. 
We thank both referees for a careful reading and
many comments which greatly helped in improving the paper. In particular, we
thank one of them for comments that led to Lemma \ref{lem:tracebv}, and the
other for pointing out reference \cite{HassannezhadMiclo}.
A.G. acknowledges support from NSERC. A.H. was partially supported by the ANR
project ANR-18-CE40-0013 -- SHAPO on Shape Optimisation. He wants
also to thank the Centre de Recherche Math\'ematiques de Montr\'eal and the
Département de Mathématiques of Université Laval for grants and fruitful
atmosphere during his stay there in 2018. 
J.L. was supported by the NSERC postdoctoral fellowship, and EPSRC grant number EP/P024793/1.

\bibliographystyle{plain}
\bibliography{../homo}

\end{document}